\newtheorem{theorem}{Theorem}[section]
\newtheorem{claim}{Claim}
\newtheorem{definition}[theorem]{Definition}
\newtheorem{lemma}[theorem]{Lemma}
\newtheorem{corollary}[theorem]{Corollary}
\newtheorem{proposition}[theorem]{Proposition}
\newtheorem{remark}[theorem]{Remark}
\newtheorem{prop}[theorem]{Proposition}
\numberwithin{equation}{section}
\newcommand\bes{\begin{eqnarray}}
\newcommand\ees{\end{eqnarray}}
\newcommand\bess{\begin{eqnarray*}}
\newcommand\eess{\end{eqnarray*}}
\begin{document}
\setlength{\baselineskip}{15pt}
\title[Entire Solutions of Diffusive Lotka-Volterra System]
{Entire Solutions of Diffusive Lotka-Volterra System}

\author{King-Yeung Lam$^{1}$}
\author{Rachidi B. Salako$^{1}$}
\author{Qiliang Wu$^{2}$}

\address{$^1$Department of Mathematics,
Ohio State University, Columbus, OH 43210}
\address{$^2$Department of Mathematics,
Ohio University, Athens, OH 45701}

\email{lam.184@math.ohio-state.edu (K-Y. Lam), salako.7@osu.edu (R. B. Salako), wuq@ohio.edu (Q. Wu)}



\def\R{{\mathbb R}}

\maketitle

\begin{abstract}
This work is concerned with the existence of entire solutions of the diffusive Lotka-Volterra competition system
\begin{equation}\label{eq:abstract}
\begin{cases}
u_{t}= u_{xx}+  u(1 - u-av), & \qquad \  x\in\mathbb{R} \cr
v_{t}= d v_{xx}+  rv(1 -v- bu), & \qquad \  x\in\mathbb{R} 
\end{cases}
\end{equation}
where $d,r,a$, and $b$ are positive constants with $a\neq 1$ and $b\neq 1$. We prove the existence of some entire solutions $(u(t,x),v(t,x))$ of \eqref{eq:abstract} corresponding to $(\Phi_{c}(\xi),0)$ at $t=-\infty$ (where $\xi=x-ct$ and $\Phi_c$ is a traveling wave solution of the scalar Fisher-KPP defined by the first equation of \eqref{eq:abstract} when $a=0$). Moreover, we also describe the asymptotic behavior of these entire solutions as $t\to+\infty$. We prove existence of new entire solutions for both the weak and strong competition case. In the weak competition case, we prove the existence of a class of entire solutions that forms a 4-dimensional manifold.

%
\end{abstract}

\medskip
\noindent{\bf Key words.} Competition systems, entire solutions, spreading speeds, traveling waves.

\medskip
\noindent {\bf 2010 Mathematics Subject Classification.}   35K57, 35B08, 35B40, 92B05

\section{Introduction}\label{s:1}
The Lotka-Voltera competition systems are frequently used to describe the  population dynamics of several competing species in their spatial domain. In this work we consider the following diffusive Lotka-Volterra competition system  of two species in the unbounded domain $\mathbb{R}$:
\begin{equation}\label{main system}
\begin{cases}
u_t=u_{xx}+u(1-u-av), \\
v_{t}=dv_{xx}+rv(1-bu-v),\quad  
\end{cases}
\end{equation}
where $a,b,d$, and $r>0$ are positive {constants}. The solutions $u(t,x)$ and $v(t,x)$ of \eqref{main system} represent {respectively} the densities of the two competing species at time $t$ and location $x\in\mathbb{R}$. Since densities must be nonnegative, only nonnegative solutions of \eqref{main system} will be of interest in this paper. 
It is well known that the asymptotic dynamics of solutions to \eqref{main system} depends delicately on  the choice of the initial distribution $(u_0(x),v_0(x))$ and the range of the parameters $a$ and $b$. Consider, for instance, the kinetic ODE system of \eqref{main system}, that is,
\begin{equation}\label{ODE}
\begin{cases}
U_t=U(1-U-aV),  \quad & \text{ for } t >0, \\
V_{t}=rV(1-bU-V), \quad &  \text{ for } t >0,\\
\end{cases}
\end{equation}
with arbitrary positive initial conditions  $U_0> 0$ and $V_0>0$, the following results are well known.
\begin{enumerate}
\item[{\bf (1)}] If $0<a,b<1$, then every solution of \eqref{ODE}  converges to the positive {equilibrium} ${\bf e}_*:=(\frac{1-a}{1-ab},\frac{1-b}{1-ab})$.
\item[{\bf (2)}] If $a,b>1$, then the behavior of solution of \eqref{ODE} depends on the choice of initial data $(u_0,v_0)$.
\item[{\bf (3)}] If $0<a<1<b$, then  every solution of \eqref{ODE} converges to  ${\bf e}_1:=(1,0)$.
\item[{\bf (4)}] If $0<b<1<a$, then every solution of \eqref{ODE} converges to  ${\bf e}_2:=(0,1)$.
\end{enumerate}

Since case {\bf (4)} can be handled similarly to {\bf (3)}, we shall henceforth consider only cases {\bf (1)} to {\bf (3)}. 

Two basic questions concerning the dynamics of \eqref{main system} are the characterization of spreading speeds of solutions and the existence of nontrivial entire solutions. By an entire solution we mean a classical solution $(u(t,x),v(t,x))$ that satisfies \eqref{main system} for $(t,x) \in \mathbb{R}^2$. Traveling waves solutions, i.e. translational invariant solutions of the form $(u(t,x),v(t,x))=(\varphi(x-ct),\psi(x-ct))$
with some appropriate boundary conditions  on $(\varphi,\psi)$ at $\pm \infty$, is an important class of entire solutions. 

Recently, Liu et al \cite{LLL2019}, Carr\`ere \cite{Car2018}, and  Gerardin and Lam \cite{LeoLam2018} studied spreading speeds of solutions of the Cauchy problem \eqref{main system} in cases {\bf (1)}, {\bf (2)}, and {\bf (3)} respectively. Among others, in case {\bf (3)}, Girardin and Lam \cite{LeoLam2018}  showed that ``if the weaker competitor is also the faster one, then it is able to evade the stronger and slower competitor by invading first into unoccupied territories. The pair of speeds depends on the initial values. If these are null in a right half-line, then the first speed is the KPP speed of the fastest competitor and the second speed is given by an exact formula depending on the first speed and on the minimal speed of traveling waves connecting the two semi-extinct equilibria. " Similar results were also established by Carr\`ere \cite{Car2018} in case {\bf (2)}, Lam et. al \cite{LLL2019} in case {\bf (1)}.

 From a dynamical point of view, large time behaviors of solutions have a strong connection with the existence of entire solutions. It is the aim of this paper to establish the existence of some entire solutions of \eqref{main system} which, when $t \to \infty$, behaves similarly as those solutions to Cauchy problems studied in \cite{Car2018,LeoLam2018,LLL2019}. In a sense, the entire solutions established in this paper are attractors to which the solutions to the Cauchy problems studied in \cite{Car2018,LeoLam2018,LLL2019}.

\medskip

\subsection*{Statement of  Main Results.}

In this subsection we state our main results on the existence of entire solutions of \eqref{main system}. We first recall some known results from related literature. 

When $a=0$, the system \eqref{main system} is decoupled and its first equation reduces to 
\begin{equation}
\label{kpp-eq}
u_{t}= u_{xx}+ u(1-u),\quad  x\in\R,
\end{equation}
which is referred to as {the} Fisher-KPP equation \cite{Fis,KPP}.  Among important solutions of \eqref{kpp-eq} are traveling wave solutions connecting the constant solutions $1$ and $0$. In fact, for each $c \geq 2$ the equation \eqref{kpp-eq} admits traveling wave solutions $u(t,x)=\Phi_c(x-ct)$ connecting $1$ and $0$, where $ \Phi_c(\xi)$ denote the unique (up to translation) solution to 
  \begin{equation}\label{tw-eq}
\begin{cases}
-c\Phi'=\Phi''+\Phi(1-\Phi)\quad \xi\in\R,\cr
\Phi(-\infty)=1, \quad \Phi(\infty)=0,
\end{cases}
\end{equation}
 and has no such traveling wave
solutions of slower speed $c<2$; see \cite{Fis, KPP, Wei1} for more details. Moreover, the stability of these traveling wave solutions of \eqref{kpp-eq} connecting $1$ and $0$ has also been studied; see \cite{Bra,Faye2019,Sat,Uch} and references therein.

 Specifically, let $\tau_c:=\frac{1}{2}(c-\sqrt{c^2-4})$ and $\widetilde{\tau}_c:=\frac{1}{2}(\sqrt{c^2+4}-c)$.  For $c>2$, the profile $\Phi_c$  is decreasing and can be chosen so that  for every $\tau\in (\tau_c,\min\{2\tau_c,1\}) $, there exist $M_c\gg 1$ and $x_{c}\gg 0$ such that 
\begin{equation}\label{decay-estimate for phi_c}
0< e^{-\tau_c x}-M_ce^{-\tau x}\leq \Phi_c(x)\leq e^{-\tau_c x},\,\,\, \text{for}\ x\ge x_{c}.
\end{equation}
Note also that the wave profile $\Phi_2$ is decreasing and for every $c>2$ there is $K_c\gg 1$ and $\widetilde{x}_c\gg 0$ such that 
\begin{equation}\label{decay-estimate for phi_2}
\Phi_2(x)\leq K_{c}\Phi_c(x)\quad\ \forall\ x>\widetilde{x}_c.
\end{equation}
Furthermore, for every $c\geq 2$, there is $\widetilde{M}_c>0$ such that 
\begin{equation}\label{decay-estimate for phi_c-at-infty}
\Phi_{c}(x)=1-\widetilde{M}_ce^{\widetilde{\tau}_cx}+o(e^{\widetilde{\tau}_cx})\quad \text{as}\ x\to-\infty,
\end{equation}
where we recall $\widetilde{\tau}_c=\frac{1}{2}(\sqrt{c^2+4}-c)$.
Observe also that for every $c\geq 2\sqrt{dr} $, the profile $\Psi_{c}(x)=\Phi_{\frac{c}{\sqrt{rd}}}(\sqrt{\frac{r}{d}}x)$ is the unique (up to translation) solution to 
  \begin{equation}\label{tw-eq for v}
\begin{cases}
-c\Psi'=d\Psi''+r\Psi(1-\Psi),\quad x\in\R,\cr
\Psi(-\infty)=1, \quad \Psi(\infty)=0.
\end{cases}
\end{equation}

\medskip

There are also many works on traveling wave solutions of the system \eqref{main system}. We refer our readers to \cite{Faye2019b,Ga,GaJo,Ho,Ka1,Ka2,Ka3,LLW1,TaFi} and the references therein for details.     For appropriate choice of $c\in\R$, we abuse the notation slightly and say that $(\varphi_c,\psi_c): \mathbb{R} \to [0,1]^2$ is  a traveling wave solution to \eqref{main system} with speed $c$, provided it satisfies
\begin{equation}\label{eq:TW-eq1} 
\begin{cases}
0=\varphi_c''+c\varphi_c'+\varphi_c(1-\varphi_c-a\psi_c),\cr
0=d\psi_c''+c\psi_c'+r\psi_c(1-\psi_c-b\varphi_c).\cr
\end{cases} 
\end{equation}
Moreover,  we introduce notations of the minimal speeds of traveling waves of the system \eqref{eq:TW-eq1}, depending on the range of parameters and boundary conditions at infinity.
 
\begin{itemize}
\item[-] If $0<a<1<b$, we denote $C_1\geq 2\sqrt{1-a}$ the minimal speed of solutions of \eqref{eq:TW-eq1} with boundary conditions $$(\varphi_c,\psi_c)(-\infty)={\bf e}_1\quad \text{and}\quad (\varphi_c,\psi_c)(\infty)={\bf e}_2.$$ 
\item[-] If $a,b>1$, we denote $C_{uv} \in \mathbb{R}$ the unique speed of solutions of \eqref{eq:TW-eq1} with boundary conditions $$(\varphi_{uv},\psi_{uv})(-\infty)={\bf e}_2\quad \text{and}\quad (\varphi_{uv},\psi_{uv})(\infty)={\bf e}_1.$$ 
\item[-] If $0<a,b<1$, we denote $C^*_1 \geq 2\sqrt{dr(1-b)}$ the minimal speed of solutions of \eqref{eq:TW-eq1} with boundary conditions $$(\varphi_c,\psi_c)(-\infty)={\bf e}_*\quad \text{and}\quad (\varphi_c,\psi_c)(\infty)={\bf e}_2.$$
\item[-] If $0<a,b<1$, we denote $C^*_2\geq 2\sqrt{1-a}$ the minimal speed of solutions of \eqref{eq:TW-eq1} with boundary conditions $$(\varphi_c,\psi_c)(-\infty)={\bf e}_*\quad \text{and}\quad (\varphi_c,\psi_c)(\infty)={\bf e}_1.$$
\end{itemize}

\vspace{.25 cm}
\begin{center}
\begin{tabular}{ |c|c|c| } 
 \hline
 Minimal speed & Range of $a, b$ & Boundary conditions at infity \\  \hline
 $C_1$ & $0<a<1<b$ & $(\varphi_c,\psi_c)(-\infty)={\bf e}_1, \quad (\varphi_c,\psi_c)(\infty)={\bf e}_2$ \\ \hline
$C_{uv}$ & $a,b>1$ & $(\varphi_c,\psi_c)(-\infty)={\bf e}_2, \quad (\varphi_c,\psi_c)(\infty)={\bf e}_1$ \\  \hline
  $C^*_1$ & $0<a,b<1$ &   $ (\varphi_c,\psi_c)(-\infty)={\bf e}_*,\quad (\varphi_c,\psi_c)(\infty)={\bf e}_2$ \\ \hline
   $C^*_2$ & $0<a,b<1$ &   $(\varphi_c,\psi_c)(-\infty)={\bf e}_*,\quad (\varphi_c,\psi_c)(\infty)={\bf e}_1$ \\ \hline
\end{tabular}
\end{center}
\vspace{.25 cm}
There are very few works on entire solutions of \eqref{main system}; see \cite{Guo2019,MoTa2009}. Morita and Tachibana in \cite{MoTa2009} established the existence of some entire solutions of \eqref{main system} of merging fronts type under the cases (2) and (3), where as $t \to -\infty$ the solution looks like two traveling waves connecting ${\bf e}_1$ and ${\bf e}_2$ coming towards each other, and  as $t \to +\infty$ the solution converges to either ${\bf e}_1$ or ${\bf e}_2$ uniformly in $x \in \mathbb{R}$. In \cite{Guo2019}, the authors treated the bistable case (2), and showed the existence of traveling fronts that is a combination of three or four merging traveling fronts. 
In this paper, we will construct three new types of entire solutions, which are different from those established in \cite{Guo2019,MoTa2009}. More specifically, all of these new entire solutions originate from the traveling front $\bm{\Phi}_c(x-ct):=(\Phi_c(x-ct),0)$  as $t \to -\infty$, and, as $t \to+\infty$, evolve to distinctive diverging fronts, whose profiles rely heavily on the competency of each species; that is, case $(1)-(3)$ results in different long time dynamics of these entire solutions. In particular, for the weak competition case (1), it is shown that the set of new entire solutions form a 4-dimensional manifold, with a limiting case discussed in Theorem \ref{MT2}. 
The general structure of entire solution of \eqref{main system} remains an interesting and challenging research direction. We refer, however, to \cite{Hamel1999,Hamel2001} for progress on the Fisher-KPP equation.

To state our main results, we first define, for every $d,c,r>0$, the auxiliary function $g_{d,c,r}$ as follows
\begin{equation}\label{g-definition}
\begin{aligned}
g_{d,c,r}: & \ [0, \frac{c}{2d}] & \longrightarrow&\quad  [r-\frac{c^2}{4d},r] &\\
&\quad\lambda & \longmapsto &\quad d\lambda^2-c\lambda  + r.&
\end{aligned}
\end{equation}
For given $\lambda\in\Lambda_{d,c,r,b}:=\{\lambda\in (0, \frac{c}{2d}) \ : \ g_{d,c,r}(\lambda)>r\max\{0, 1-b\}\}$, we introduce the speed
$$ 
c_v:=d\lambda+ \frac{r}{\lambda}.
$$ 
For $0<a<1$ and $c\ge 2$, we set $ \widetilde{\lambda}_{acc}=\frac{1}{2}\left(c_v-\sqrt{(c_v-2\tau_c)^2+4a} \right)$ and 
$$ 
\widehat{c}_{acc}:=
\begin{cases}
\widetilde{\lambda}_{acc}+(1-a)\widetilde{\lambda}^{-1}_{acc},&   \widetilde{\lambda}_{acc}\leq\sqrt{1-a}, \cr
2\sqrt{1-a},& \text{otherwise},
\end{cases}
$$
and introduce various speeds
\[
c_{u, 1}:=\max\{C^*_1,\widehat{c}_{acc}\} \,\,\, \text{if} \,\, b<1  \quad \text{and}\quad  c_{u,2}:=\max\{C_1,\widehat{c}_{acc}\}\,\,\, \text{if}\,\, b>1.
\] 
In addition, if $0<b<1$, we set $\widetilde{\lambda}:=\min\left\{\sqrt{\frac{r(1-b)}{d}},\frac{1}{2d}\left[\sqrt{c^2+4d(g_{d,c,r}(\lambda)+r(b-1))}-c\right]\right\}$ and introduce the speed
\[
\widetilde{c}_v=\max\{C^*_2,d\widetilde{\lambda}+r(1-b)\widetilde{\lambda}^{-1}\}
\]
Denoting the $L^\infty$-norm of a function ${\bf u}(x)=(u(x),v(x)): \R\to\R^2$ as $\|{\bf u}\|_{\infty}:=\sup_{x\in\R}\{|u(x)|, |v(x)|\}$ and the $L^1$-norm of a vector ${\bf u}=(u,v)$ as $|{\bf u}|_1:=|u|+|v|$, we state our main results on the existence of entire solutions of \eqref{main system}. 

\medskip
\begin{theorem}[Divergent type]\label{Main Tm}
Given $a, b, d, r>0$ and $c\geq 2$, the Lotka-Volterra system \eqref{main system} admits the traveling wave solution 
\[
\bm{\Phi}_c(x-ct):=(\Phi_c(x-ct), 0),
\]
from which ``originates" 
a family of entire solutions ${\bf u}_\lambda$, parametrized by $\lambda\in( 0 ,\sqrt{r/d})$ such that $g_{d,c,r}(\lambda) > r\max\{1-b,0\}$, denoted as
\[
{\bf u}_\lambda(t,x):=(u_\lambda(t,x), v_\lambda(t,x))\in C^{1,2}(\R\times\R^2, \R^2),
\]
in the sense that 
\[
\lim_{t\to-\infty}\|{\bf u}_{\lambda}(t,\cdot)-\bm{\Phi}_c(\cdot-ct)\|_\infty=0.
\]
Moreover, the ``destiny"--long time dynamics as $t\to+\infty$--of these entire solutions depends essentially on the ``competency" of each species; that is, the range of $a$ and $b$. More specifically, we have the following cases.
\begin{enumerate}
\item If $0<a,b<1$, then we have
\begin{subequations}\label{e:MT1.1}
\begin{align}
\limsup_{t\to\infty}\sup_{x\leq -(\widetilde{c}_v+\varepsilon)t}|{\bf u}_\lambda(t,x)-{\bf e}_1|_1=&0, \quad \forall\ 0<\varepsilon\ll 1,\label{MT1.1.1}\\
\limsup_{t\to\infty}\sup_{-(\widetilde{c}_v-\varepsilon)t\leq x\leq (c_{u,1}-\varepsilon)t}|{\bf u}_\lambda(t,x)-{\bf e}_*|_1=&0, \quad \forall\ 0<\varepsilon\ll 1, \label{MT1.1.2}\\
\limsup_{t\to\infty}\sup_{(c_{u,1}+\varepsilon)t\leq x\leq (c_v-\varepsilon)t}|{\bf u}_\lambda(t,x)-{\bf e}_2|_1=&0,\quad \forall\ 0<\varepsilon\ll 1,\label{MT1.1.3}\\
\limsup_{t\to\infty}\sup_{x\geq (c_v+\varepsilon)t}|{\bf u}_\lambda(t,x)|_1=&0, \quad{\bf \forall\ \varepsilon >0}. \label{MT1.1.4}
\end{align}
\end{subequations}
 In fact, denoting $\bm{\Psi}_{c_v}(\xi):=(0,\Psi_{c_v}(\xi))$,  there exists $h_0 \in \mathbb{R}$ such that \eqref{MT1.1.3} and \eqref{MT1.1.4} can be improved to
\begin{equation}\label{MT1.1.5}
\limsup_{t\to\infty}\sup_{x >(c_{u,1}+\varepsilon)t} |{\bf u}_\lambda(t,x)- \bm{\Psi}_{c_v}(x-c_vt-h_0)|_1 =0.
\end{equation}

\item If $a,b>1$, then there exists $h_0,h_1 \in \mathbb{R}$ such that 
$$
 \limsup_{t\to\infty}\sup_{x\leq (c_v - \varepsilon)t} |{\bf u}_\lambda(t,x)- (\varphi_{uv}(x-C_{uv}t - h_1),\min\{\psi_{uv}(x-C_{uv}t - h_1), \Psi_{c_v}(x-c_v t - h_0)\} )|_1=0,
$$
where $(\varphi_{uv}, \psi_{uv})$ is the traveling wave solution connecting ${\bf e}_1$ at $-\infty$ to ${\bf e}_2$ at $+\infty$, with speed $C_{uv}$. In particular, we have convergence to homogeneous states in coordinates moving at speed that is different from $C_{uv}$ and $c_v$, i.e.
\begin{subequations}\label{e:MT1.2}
\begin{align}
 \limsup_{t\to\infty}\sup_{x\leq (C_{uv} - \varepsilon)t} |{\bf u}_\lambda(t,x)- {\bf e}_1|_1=&0,\quad \forall\ \varepsilon>0,\label{e:MT1.2.1}\\
\limsup_{t\to\infty}\sup_{(C_{uv}+\varepsilon)t\leq x\leq (c_v-\varepsilon)t}|{\bf u}_\lambda(t,x)-{\bf e}_2|_1=&0,\quad \forall\ 0<\varepsilon\ll 1, \label{e:MT1.2.2}\\
\limsup_{t\to\infty}\sup_{x\geq (c_v+\varepsilon)t}|{\bf u}_\lambda(t,x)|_1=&0, \quad \forall\ \varepsilon>0.\label{e:MT1.2.3}
\end{align}

\end{subequations}
\item If $0<a<1<b$, then 
\begin{subequations}\label{e:MT1.3}
\begin{align}
\limsup_{t\to\infty}\sup_{x\leq (c_{u,2}-\varepsilon)t}|{\bf u}_\lambda(t,x)-{\bf e}_1|_1=&0,\quad \forall\ \varepsilon>0, \label{e:MT1.3.1}\\
\limsup_{t\to\infty}\sup_{(c_{u,2}+\varepsilon)t\leq x\leq (c_v-\varepsilon)t}|{\bf u}_\lambda(t,x)-{\bf e}_2|_1=&0,\quad \forall\ 0<\varepsilon\ll 1, \label{e:MT1.3.2}\\
\limsup_{t\to\infty}\sup_{x\geq (c_v+\varepsilon)t}|{\bf u}_\lambda(t,x)|_1=&0, \quad \forall \varepsilon>0. \label{MT1.3.3}
\end{align}
\end{subequations}
In fact, \eqref{e:MT1.3.2} and \eqref{MT1.3.3} can be improved to
\begin{equation}\label{MT1.3.4}
\limsup_{t\to\infty}\sup_{x >(c_{u,1}+\varepsilon)t} |{\bf u}_\lambda(t,x)- \bm{\Psi}_{c_v}(x-c_v t - h_0))|_1 =0, \quad \text{ for some }h_0.
\end{equation}
\end{enumerate}
\end{theorem}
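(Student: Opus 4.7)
\emph{Construction and $t\to-\infty$ limit.} The plan is to realize each ${\bf u}_\lambda$ as a limit of Cauchy problems posed further and further back in time, then transfer the forward-time behavior to the three cases via the spreading-speed results \cite{Car2018, LeoLam2018, LLL2019}. For each $n\in\mathbb{N}$, let $(u_n, v_n)$ solve \eqref{main system} on $[-n,\infty)\times\R$ with initial data
\[
u_n(-n,x)=\Phi_c(x+cn),\qquad v_n(-n,x)=\min\{1,\,\kappa\, e^{-\lambda(x+c_v n)}\},
\]
where $\kappa>0$ is a small fixed constant. The identity $c_v\lambda = d\lambda^2 + r$ (equivalent to $c_v = d\lambda+r/\lambda$) makes $w(t,x):=\kappa e^{-\lambda(x-c_v t)}$ a supersolution of the $v$-equation whenever $u\geq 0$, since $w_t - dw_{xx} - rw(1-w-bu) = r(w+bu)w \geq 0$. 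Applying the competitive comparison principle with the partial order $(u_1,v_1)\preceq(u_2,v_2)\iff u_1\leq u_2,\ v_1\geq v_2$, we obtain for all $t\geq -n$,
\[
0\leq u_n(t,x)\leq \Phi_c(x-ct),\qquad 0\leq v_n(t,x)\leq \kappa e^{-\lambda(x-c_v t)}.
\]
A matching lower bound $u_n(t,x)\geq \Phi_c(x-ct) - Ce^{-\mu(x-c_v t)}$, for an appropriate $\mu>0$, is produced by perturbing $\Phi_c$ with an exponential subsolution of the $u$-equation linearized about $(\Phi_c,0)$; the constraint $g_{d,c,r}(\lambda)>r\max\{1-b,0\}$ is precisely what keeps this perturbation subordinate to $\Phi_c$. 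Parabolic $C^{1,2}_{\mathrm{loc}}$ estimates then extract a subsequential limit ${\bf u}_\lambda\in C^{1,2}(\R\times\R,\R^2)$, an entire solution inheriting the above bounds. Since $c_v>0$, letting $t\to-\infty$ in these sandwich bounds yields $\|{\bf u}_\lambda(t,\cdot)-\bm{\Phi}_c(\cdot-ct)\|_\infty\to 0$.

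\emph{Forward-time behavior.} The pivotal intermediate step is to show that the $v$-front stabilizes into a translate of $\Psi_{c_v}$: there exists $h_0\in\R$ such that $v_\lambda(t,\cdot+c_v t)\to \Psi_{c_v}(\cdot-h_0)$ locally uniformly as $t\to+\infty$. In the moving frame $\xi=x-c_v t$ the $v$-equation reduces to a KPP equation up to lower-order terms that vanish as $u\to 0$ ahead of the $u$-front, so the uniqueness and exponential stability theory for Fisher-KPP fronts \cite{Bra, Faye2019, Sat, Uch} delivers this convergence and in particular \eqref{MT1.1.5} and \eqref{MT1.3.4}. With the front-formation as a new initial configuration at a sufficiently large time $T_0$, the spreading-speed theorems of \cite{LLL2019} (case 1), \cite{Car2018} (case 2), and \cite{LeoLam2018} (case 3) yield respectively the three-zone structure \eqref{e:MT1.1}, the merging-front structure \eqref{e:MT1.2}, and the weak-strong structure \eqref{e:MT1.3}. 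The speeds $c_{u,1}, c_{u,2}, \widetilde c_v$ defined through $\widehat c_{acc}$ emerge naturally here: when the $u$-invasion into the $v$-region is ``pushed'' by the $\lambda$-exponential tail trailing $\Phi_c$, its effective speed is the acceleration speed $\widehat c_{acc}$; otherwise it equals the pertinent KPP speed ($C_1^*$, $C_1$, etc.), and the maximum captures the dominant mechanism.

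\emph{Main obstacle.} The most delicate step is tuning the exponential subsolution for $u$ to the prescribed tail rate $\lambda$ of $v$, given that the two fronts travel at distinct speeds ($c$ for $u$, $c_v$ for $v$) so that the linearization around $(\Phi_c,0)$ is non-autonomous in any single moving frame. The sharpness of the constraint $g_{d,c,r}(\lambda)>r\max\{1-b,0\}$ enters precisely here: violating it would allow the $v$-induced perturbation to overwhelm $\Phi_c$ and destroy the $t\to-\infty$ limit. A secondary subtlety is uniquely identifying the shift $h_0$ in the KPP front-formation step, which requires the contracting-flow or spectral-gap theory for the linearized KPP dynamics to ensure that different subsequential limits cannot yield different translates.
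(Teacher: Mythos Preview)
Your overall architecture---backward Cauchy problems sandwiched between super- and sub-solutions, then forward spreading via \cite{LLL2019,LeoLam2018,Car2018}---matches the paper's. But there is a genuine gap in the construction step, and it is exactly the point you flag as the ``main obstacle'' without resolving. Your super-solution for $v$ is $\kappa e^{-\lambda(x-c_vt)}$, which is unbounded in $x$; the resulting sandwich only gives $v_\lambda(t,x)\le\min\{1,\kappa e^{-\lambda(x-c_vt)}\}$, and this does \emph{not} tend to $0$ uniformly in $x$ as $t\to-\infty$ (it equals $1$ on the half-line $x\le c_vt+\lambda^{-1}\log\kappa$). So the sentence ``since $c_v>0$, letting $t\to-\infty$ in these sandwich bounds yields $\|{\bf u}_\lambda(t,\cdot)-\bm{\Phi}_c(\cdot-ct)\|_\infty\to0$'' is simply false. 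Worse, your initial data $v_n(-n,\cdot)=\min\{1,\kappa e^{-\lambda(\cdot+c_vn)}\}$ equals $1$ on a left half-line, so any limit you extract could carry a nontrivial $v$-tail at $x=-\infty$ for all $t$, destroying the claimed $t\to-\infty$ behaviour. The same issue afflicts your lower barrier $\Phi_c(x-ct)-Ce^{-\mu(x-c_vt)}$ for $u$: the correction blows up to the left, so again no uniform control.

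The paper resolves this by working in the \emph{single} moving frame $\xi=x-ct$ and solving the linearized eigenvalue problem (their Lemmas~2.1--2.2) for the pair $(\varphi,\psi)$ with eigenvalue $\mu=g_{d,c,r}(\lambda)$. The crucial output is that $\psi$ is \emph{bounded}: $\psi(\xi)\sim e^{-\lambda\xi}$ as $\xi\to+\infty$ but $\psi(\xi)\to\Upsilon<\infty$ as $\xi\to-\infty$, with $\delta_v\ge0$ precisely because of the hypothesis $g_{d,c,r}(\lambda)\ge r(1-b)$. This boundedness is what makes $\overline v_\star=\varepsilon e^{p(t)}\psi(\xi)$ vanish uniformly as $t\to-\infty$, and similarly $\varphi/\Phi_c\in C_0(\R)$ gives the $u$-bound. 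The matched pair $(\varphi,\psi)$ also feeds the ODE sub/super-solutions $e^{p(t)},e^{q(t)}$ (their Lemma~3.1), yielding monotonicity in $n$ and hence a uniquely determined limit rather than a subsequential one. Finally, the exact tail rates needed to invoke the cited spreading results (their Proposition~3.8) come from the precise asymptotics of $\psi$; your pure-exponential barriers give only one-sided estimates and would not pin down $c_v$, $h_0$, or the speeds $c_{u,1},c_{u,2},\widetilde c_v$. In short, the eigenfunction construction is not a technical refinement but the mechanism that makes the whole sandwich close; your proposal identifies the difficulty but does not supply the missing ingredient.
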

\begin{remark}
For $\lambda \in (0,\sqrt{r/d})$ and $c> 2\sqrt{dr\min\{b,1\}}$,
$$
g_{d,c,r}(\lambda) > r\max\{1-b,0\} \quad \text{ iff }\quad 0 < \lambda < \frac{c - \sqrt{c^2 - 4dr \min\{b,1\}}}{2d}. 
$$ 
\end{remark}

\begin{figure}
\centering
\includegraphics[width=0.9\textwidth]{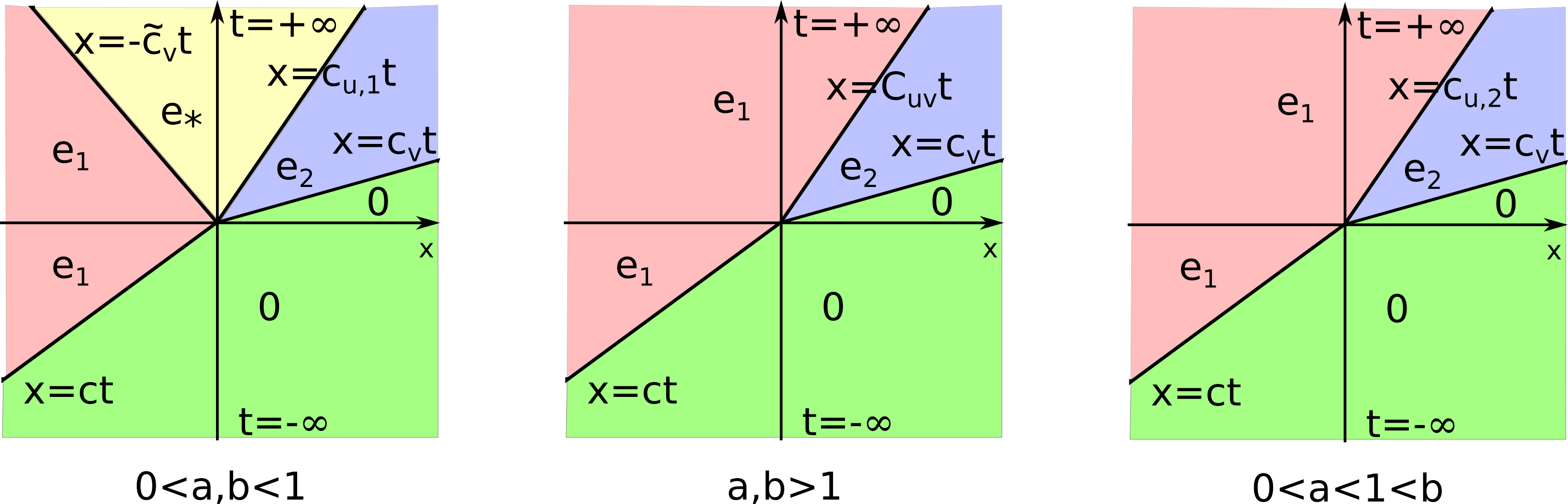}
\caption{Schematic plots of entire solutions in Theorem \ref{Main Tm}.}
\end{figure}


\medskip

By Theorem \ref{Main Tm} (1), one observes that for each $c\geq 2$ and $\lambda \in (0, \sqrt{r/d})$, the entire solution ${\bf u}_\lambda$ is approximately equal to ${\bf e}_*$ in the region
$$
\Omega_* =  \{(t,x):  -\widetilde{c}_v t < x < c_{u,1} t, \,\, \text{ and }\,\, t \gg 1\}. 
$$
It is worth pointing out that, both $\widetilde{c}_v$ and $c_{u,1}$ are increasing in terms of $\lambda$. i.e. $\Omega_*$ is increasing in $\lambda$. The following can be viewed as a limiting case of Theorem \ref{Main Tm}(1),  when $\widetilde{c}_v = \infty$. This 
happens whenever $c \geq 2\max\{1, \sqrt{drb}\}$ and $\lambda = (c - \sqrt{c^2 - 4drb})/(2d)$. 
\begin{theorem}[Limiting divergent type]\label{MT2}
Let $d, r, >0$, $0<a , b<1$, and $c > 2\max\{1, \sqrt{drb}\}$ be given, 
then the Lotka-Volterra system \eqref{main system} admits the traveling wave solution 
\[
\bm{\Phi}_c(t,x)=(\Phi_c(x-ct), 0),
\]
from which ``originates" an entire solution ${\bf u}(t,x):=(u(t,x),v(t,x))$, $t,x\in\R$; that is,
\begin{equation}\label{MT2.1}
\lim_{t\to-\infty}\|{\bf u}(t,\cdot)-\bm{\Phi}_c(t,\cdot)\|_{\infty}=0.
\end{equation}
Moreover, there exists $h_0 \in \mathbb{R}$ such that the ``destiny"--long time dynamics as $t\to+\infty$-- of this entire solution satisfies the following properties.
\begin{subequations}
\begin{align}
\lim_{t\to\infty}\sup_{x\leq(c_{u,1}-\varepsilon)t}|{\bf u}(t,x)-{\bf e}_*|_1=0,\quad \forall 0<\varepsilon\ll 1,\label{MT2.2}\\
\lim_{t \to \infty} \sup_{x> (c_{u,1} + \varepsilon)t} |{\bf u}(t,x) - \bm{\Psi}_{c_{v,3}}(x- c_{v,3}t - h_0) |_1 = 0,\quad \forall\ \varepsilon>0, \label{MT2.4}
\end{align}
\end{subequations}
 where 
 \begin{equation}\label{new-eqz1}
 c_{v,3}:=d\lambda_3+r\lambda_3^{-1} \quad  \text{ with } \quad \lambda_3 = \frac{c - \sqrt{c^2 - 4drb}}{2d} \in \left(0, \sqrt{\frac{r}{d}}\right), 
 \end{equation} and that $c_{u,1}=\max\{C_1^*,\widehat{c}_{acc}\}$.
\end{theorem}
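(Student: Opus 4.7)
The plan is to obtain the entire solution ${\bf u}$ of Theorem \ref{MT2} as a limit of the one-parameter family $\{{\bf u}_\lambda\}$ produced by Theorem \ref{Main Tm}(1), taken along a sequence $\lambda_k \uparrow \lambda_3$. Note first that the hypothesis $c > 2\max\{1,\sqrt{drb}\}$ makes $\lambda_3 = (c - \sqrt{c^2 - 4drb})/(2d)$ real and positive, and the identity $\lambda_3 \lambda_3' = rb/d$ for the two roots of $g_{d,c,r}(\lambda) = r(1-b)$ places $\lambda_3$ in $(0,\sqrt{rb/d}) \subset (0,\sqrt{r/d})$. Since $g_{d,c,r}$ is convex with minimum on $(0,c/(2d))$, we have $g_{d,c,r}(\lambda) > r(1-b)$ for every $\lambda \in (0,\lambda_3)$, so each ${\bf u}_{\lambda_k}$ given by Theorem \ref{Main Tm} is well-defined. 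Because $0 \le {\bf u}_{\lambda_k} \le (1,1)$, parabolic $L^p$ and Schauder estimates furnish uniform $C^{1+\alpha/2,2+\alpha}_{\mathrm{loc}}$-bounds, and a subsequence converges in $C^{1,2}_{\mathrm{loc}}(\R\times\R)$ to a classical entire solution ${\bf u}$ of \eqref{main system}.

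The next step is to promote the $t\to-\infty$ asymptotics \eqref{MT2.1} from ${\bf u}_{\lambda_k}$ to the limit. The construction of ${\bf u}_\lambda$ in Theorem \ref{Main Tm}(1) is presumably carried out by sandwiching $v_\lambda$ between the supersolution $\min\{1, A_\lambda e^{-\lambda(x - c_v t)}\}$ and a suitable subsolution, and by controlling $u_\lambda$ from below by $\Phi_c(x-ct) - B_\lambda e^{\mu(x-ct)}$ (with $\mu \in (\tau_c, \min\{2\tau_c,1\})$, cf.\ \eqref{decay-estimate for phi_c}) and from above by $\Phi_c(x-ct)$. I will verify that the constants $A_\lambda, B_\lambda$ remain bounded as $\lambda\uparrow\lambda_3$; the barrier inequalities on these envelopes only degenerate where the $v$-super-solution saturates at $1$, which for $t\ll 0$ occurs on a region shrinking to $-\infty$ uniformly in $\lambda$ near $\lambda_3$. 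Passing to the limit $k\to\infty$ then gives the uniform bound $\|{\bf u}(t,\cdot) - \bm{\Phi}_c(\cdot-ct)\|_\infty \to 0$ as $t\to-\infty$.

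For the forward-time behavior, the interior convergence \eqref{MT2.2} follows from \eqref{MT1.1.1}--\eqref{MT1.1.2} applied to ${\bf u}_{\lambda_k}$: since $g_{d,c,r}(\lambda_k) \downarrow r(1-b)$, the auxiliary quantity $\widetilde{\lambda}_k$ satisfies $\widetilde{\lambda}_k \downarrow 0$, so the upper-envelope speed $\widetilde{c}_{v,k} = \max\{C_2^*, d\widetilde{\lambda}_k + r(1-b)\widetilde{\lambda}_k^{-1}\} \to \infty$, and the strip $\{-\widetilde{c}_{v,k}t < x < (c_{u,1}-\varepsilon)t\}$ on which ${\bf u}_{\lambda_k}$ converges to ${\bf e}_*$ exhausts $\{x < (c_{u,1}-\varepsilon)t\}$. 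A standard diagonal argument, combined with $C^0_{\mathrm{loc}}$ convergence ${\bf u}_{\lambda_k}\to{\bf u}$, yields \eqref{MT2.2}. For the sharp leading-edge statement \eqref{MT2.4}, I would observe that on $\{x \ge (c_{u,1}+\varepsilon)t\}$ the $u$-component decays exponentially (to be shown by combining \eqref{MT2.2} with a $u$-supersolution of the form $\min\{1, C e^{-\eta(x - c_{u,1}t)}\}$ for $\eta>0$ small). Once $u$ is small, the $v$-equation is asymptotically the scalar Fisher--KPP equation of speed $c_{v,3}$, whose leading edge is controlled by the decay rate $\lambda_3$; sandwiching $v$ between sharp $\lambda_3$-exponential sub- and super-solutions and invoking a Bramson / Faye--Holzer type convergence result produces a single logarithmic shift $h_0$ so that $v(t,x) - \Psi_{c_{v,3}}(x - c_{v,3}t - h_0) \to 0$ uniformly on this region.

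The main technical obstacle is the uniform passage of the $v$-decay estimates through the limit $\lambda_k \uparrow \lambda_3$ on the whole time interval $(-\infty, T]$, since the constants $A_\lambda$ appearing in the Theorem \ref{Main Tm} barriers are built from $g_{d,c,r}(\lambda) - r(1-b)$ and can, a priori, blow up as $\lambda\to\lambda_3$. I expect this to require either redoing the sub/supersolution construction directly at $\lambda_3$ using the sharper profile $\bm{\Psi}_{c_{v,3}}(x - c_{v,3}t - h_0)$ itself as a barrier, or carefully exploiting the monotone dependence of the family ${\bf u}_\lambda$ in $\lambda$ to absorb the degeneracy. A secondary difficulty is pinning down the logarithmic shift $h_0$ in \eqref{MT2.4}, which will rely on sharp $\lambda_3$-exponential asymptotics of $v$ ahead of the front and does not come directly from Theorem \ref{Main Tm}.
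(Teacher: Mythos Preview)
Your approach---taking a limit of the family $\{{\bf u}_{\lambda_k}\}$ as $\lambda_k\uparrow\lambda_3$---is genuinely different from what the paper does, and the obstacle you flag at the end is exactly the reason the paper avoids it. The paper constructs the entire solution \emph{directly} at $\lambda=\lambda_3$: the eigenvalue lemma (Lemma~\ref{existence-of-unstable-eigenfunction} / Lemma~\ref{lem:psi}) is proved for $g_{d,c,r}(\lambda)\geq r(1-b)$ with equality permitted, and in the borderline case one has $\delta_v=0$, so the eigenfunction $\psi$ is merely bounded (not decaying) as $x\to-\infty$. With $(\varphi,\psi)$ in hand at $\lambda_3$, the sub/super-solution pair of Section~\ref{s:3} is built verbatim, yielding an entire solution sandwiched as in \eqref{main-asymptotic at -infty-4}; the $t\to-\infty$ asymptotics \eqref{MT2.1} are then immediate. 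For $t\to+\infty$, the paper reads off the exact exponential rates of $u(0,\cdot)$ and $v(0,\cdot)$ at $+\infty$ (Proposition~\ref{prop:exp}) and feeds them into the spreading-speed results of \cite{LLL2019} and the convergence theorem of Uchiyama~\cite{Uch}. In short, your ``fallback'' option---redoing the barrier construction directly at $\lambda_3$---is the paper's primary route, not a repair.

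Your limit argument, as written, has a real gap on both ends. For $t\to-\infty$, the envelope constants in the Theorem~\ref{Main Tm} construction are tied to $\mu_k-r(1-b)=g_{d,c,r}(\lambda_k)-r(1-b)\downarrow 0$, and you have not shown they remain bounded; $C^{1,2}_{\mathrm{loc}}$ compactness alone does not transfer the uniform-in-$x$ estimate \eqref{MT2.1} to the limit. For $t\to+\infty$, the diagonal argument you sketch for \eqref{MT2.2} requires the rate at which ${\bf u}_{\lambda_k}(t,\cdot)\to{\bf e}_*$ on the expanding strip to be uniform in $k$, which is not contained in the statement of Theorem~\ref{Main Tm}(1) and is not recoverable from local convergence of ${\bf u}_{\lambda_k}$ to ${\bf u}$. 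Finally, a minor point: since $c_{v,3}>2\sqrt{dr}$ the front $\Psi_{c_{v,3}}$ is supercritical, so the shift $h_0$ is a genuine constant determined by the decay rate $\lambda_3$ (via Uchiyama's theorem), not a Bramson-type logarithmic correction.
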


\medskip

\begin{theorem}[Merging type]\label{MT-3}
Given $d>0$, $r>0$, $0<a<1<b$ and $c_v> 2\max\{\sqrt{rd},\sqrt{a}\}$, 
%
then the Lotka-Volterra system \eqref{main system} admits an entire solution ${\bf u}_m(t,x):=(u_m(t,x),v_m(t,x))$ connecting the following two traveling wave solutions 
\[
\bm{\Psi}_{c_v}(x-c_vt)=(0,\Psi_{c_v}(x-c_vt)) \quad \text{ and }\quad \bm{\Phi}_{c_{u,3}}(x-c_{u,3}t)=(\Phi_{c_{u,3}}(x-c_{u,3}t),0)
\]
that is, there exists $h_0 \in \mathbb{R}$ such that 
\begin{subequations}\label{MT3.1}
\begin{align}
\lim_{t\to-\infty} \sup_{ x\in\mathbb{R}} |{\bf u}_m(t,x)-\bm{\Psi}_{c_v}(x - c_v t)|_{1}=0,  \label{MT3.1.1}\\
\lim_{t \to \infty} \sup_{x \in \mathbb{R}} |{\bf u}_m(t,x) - \bm{\Phi}_{c_{u,3}}(x- c_{u,3}t - h_0) |_1 = 0,\label{MT3.1.2}
\end{align}
\end{subequations}
 where 
 \begin{equation}\label{new-eqz2}
  c_{u,3}: = \lambda_4+ \lambda_4^{-1}, \quad \text {with } \lambda_4:=\frac{1}{2}\left(c_v-\sqrt{c_v^2-4a} \right).
 \end{equation}
 We note that $\lambda_4<1$ due to the fact that $c_v>2\sqrt a$. In addition, $c_{u,3} - c_v  = \frac{1-a}{2a} (c_v + \sqrt{c_v^2 - 4a}) >0$.
\end{theorem}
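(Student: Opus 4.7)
Since $0<a<1<b$, the change of variable $w:=1-v$ turns \eqref{main system} into a cooperative system in $(u,w)$, so the comparison principle applies to ordered pairs $(\overline u,\underline v)\ge (\underline u,\overline v)$. I would construct $\mathbf{u}_m$ as a subsequential limit, along $t_n\to -\infty$, of solutions $(u_n,v_n)$ of the Cauchy problem for \eqref{main system} on $[t_n,\infty)\times\mathbb R$ with initial data
\[
u_n(t_n,x)=\varepsilon_n\phi_*(x-c_v t_n),\qquad v_n(t_n,x)=\Psi_{c_v}(x-c_v t_n),
\]
where $\varepsilon_n:=\varepsilon_0 e^{-(1-a)|t_n|}$ and $\phi_*(\xi)>0$ is the principal eigenfunction, associated with eigenvalue $\mu_*=1-a$, of the linearised operator $\mathcal L\phi:=\phi''+c_v\phi'+(1-a\Psi_{c_v}(\xi))\phi$. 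The structural heart of the proof is an exponent-matching: at $\xi=+\infty$ the equation $\mathcal L\phi=(1-a)\phi$ degenerates to $\phi''+c_v\phi'+\phi=(1-a)\phi$, whose decaying exponential solutions $e^{-\lambda\xi}$ satisfy $\lambda^2-c_v\lambda+a=0$; because $c_v>2\sqrt a$ the smaller root is $\lambda_4\in(0,c_v/2)$, so the principal tail of $\phi_*$ decays precisely like $e^{-\lambda_4\xi}$. The effective speed of the linear mode $e^{(1-a)(t-t_n)}\phi_*(x-c_v t)$ is therefore $c_v+(1-a)/\lambda_4=\lambda_4+1/\lambda_4=c_{u,3}$ (using $c_v=\lambda_4+a/\lambda_4$), which predicts the correct front speed.

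\textbf{Barriers and compactness.} For a supersolution I take $(\overline u,\underline v):=\bigl(\min\{1,Me^{-\lambda_4(x-c_{u,3}t-h)}\},\,0\bigr)$; the identity $\lambda_4 c_{u,3}=\lambda_4^2+1$ makes $\overline u$ a supersolution of the scalar Fisher--KPP equation $u_t=u_{xx}+u(1-u)$, hence of the first equation of \eqref{main system} for any $v\ge 0$. For a subsolution I pair
\[
\underline u(t,x):=\varepsilon_n\phi_*(x-c_v t)\,e^{(1-a)(t-t_n)},
\]
which satisfies $\underline u_t\le \underline u_{xx}+\underline u(1-\underline u-av)$ as long as $\underline u\le \delta_0\ll 1$ (the quadratic error $\underline u^2$ and the perturbation $au(\Psi_{c_v}-v)$ are subdominant), together with $\underline v(t,x):=\Psi_{c_v}(x-c_v t)-\eta_n(t,x)$, where $\eta_n\ge 0$ is a small correction built from a linear comparison that absorbs the damping of $v$ by $b\overline u$. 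Beyond the threshold $\underline u=\delta_0$ one splices $\underline u$ with a translate of $\Phi_{c_{u,3}}(x-c_{u,3}t-h_*)$ via the classical KPP invasion lemma. Uniform parabolic Schauder estimates and Arzel\`a--Ascoli then deliver the entire solution $\mathbf u_m\in C^{1,2}(\mathbb R^2)$ along a diagonal subsequence.

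\textbf{Asymptotics and main obstacle.} As $t\to -\infty$, both barriers collapse to $\bm{\Psi}_{c_v}(x-c_v t)$ in $L^\infty(\mathbb R)$, which yields \eqref{MT3.1.1}. For $t\to +\infty$, the supersolution pins the leading edge of $u_m$ at rate $\lambda_4$ in the frame $x-c_{u,3}t$, while the subsolution drives $u_m\to 1$ behind that front; since $c_{u,3}>c_v$, the $u$-wave overtakes the $v$-wave, and once $u_m\approx 1$ the inequality $v_t\le dv_{xx}-r(b-1-o(1))v$ forces $v_m\to 0$ exponentially in that region. Combined with the a priori upper bound $v_m\le \Psi_{c_v}(x-c_v t)$, which is small wherever $x-c_v t\gg 1$, this gives $v_m\to 0$ uniformly on $\mathbb R$. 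The $u_m$-equation then asymptotes to scalar Fisher--KPP, and Bramson/Uchiyama-type stability \cite{Bra,Uch,Faye2019} of the super-critical front $\Phi_{c_{u,3}}$ (applicable because $\lambda_4<1$, equivalently $c_{u,3}>2$) produces the shift $h_0\in\mathbb R$ in \eqref{MT3.1.2}. The technically hardest step is the \emph{splice} between the linear regime (governed by $\varepsilon_n\phi_*$) and the nonlinear KPP regime: converting the pointwise exponent matching into a \emph{genuine} shifted travelling wave, with no logarithmic drift, requires both the super-critical tail $\lambda_4<1$ and a careful matching comparison with a translate of $\Phi_{c_{u,3}}$ at the threshold time when $\underline u$ exits the linear regime.
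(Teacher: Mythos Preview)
Your strategy is essentially the one the paper follows in Section~\ref{s:5}: linearise \eqref{main system} around $\bm{\Psi}_{c_v}$, identify the principal eigenvalue $\mu=1-a$ and eigenfunction $\phi_*$ (the paper's $\widehat\varphi$) with decay rate $\lambda_4$ at $+\infty$, construct the entire solution as a limit of Cauchy problems with data $\bm{\Psi}_{c_v}+\varepsilon_n(\phi_*,\ldots)$ sandwiched between barriers, and then invoke Uchiyama-type stability of the super-critical KPP front $\Phi_{c_{u,3}}$ once $v\to0$ uniformly. The exponent matching $c_{u,3}=\lambda_4+\lambda_4^{-1}$ you identify is exactly the mechanism the paper uses.

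Two technical points where the paper is sharper than your sketch. First, your proposed subsolution $\underline u=\varepsilon_n\phi_*\,e^{(1-a)(t-t_n)}$ paired with $\overline v=\Psi_{c_v}-\eta_n$ fails the subsolution inequality by the quadratic term: one computes $\underline u_t-\underline u_{xx}-\underline u(1-\underline u-a\overline v)=\underline u^2-a\underline u\,\eta_n$, which is not $\le0$ unless $\eta_n\ge\underline u/a$ pointwise; this forces a coupling between the two corrections that your ``small correction built from a linear comparison'' does not automatically provide. The paper resolves this cleanly by (i) solving the \emph{coupled} eigenvalue problem for $(\widehat\varphi,\widehat\psi)$ in Lemma~\ref{existence-of-unstable-eigenfunction-2} so that both corrections share the rate $e^{(1-a)t}$, and (ii) replacing the pure exponential $e^{(1-a)(t-t_n)}$ by the ODE profiles $\widehat p(t),\widehat q(t)$ of Lemma~\ref{asymptotic-behavior-of-p-and-q} (with $\mu=1-a$), whose built-in nonlinear correction $\pm\varepsilon M e^{p}$ absorbs the quadratic error exactly. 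This device eliminates your ``splice between the linear and nonlinear regime'' entirely and, as a bonus, makes the barrier pair deterministic via translation (Lemma~\ref{unqueness lem }), giving uniqueness of the entire solution.

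Second, your super-solution pair $(\overline u,\underline v)=(\min\{1,Me^{-\lambda_4(x-c_{u,3}t-h)}\},0)$ only yields $v\ge0$, which does not by itself close the barrier for $v$ as $t\to-\infty$; the paper's pair $\bm{\Psi}_{c_v}+\varepsilon\widehat{\bm\Phi}\,e^{\widehat q(t)}$ gives a nontrivial lower bound $v\ge\Psi_{c_v}+\varepsilon\widehat\psi\,e^{\widehat q(t)}$ that converges to $\Psi_{c_v}$ and hence delivers \eqref{MT3.1.1} directly. Your $t\to+\infty$ analysis (overtaking since $c_{u,3}>c_v$, extinction of $v$ via $b>1$, then Uchiyama) matches the paper's argument in \eqref{eq:ww-1}--\eqref{eq:ww-5} essentially line for line.
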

\begin{remark}
Traveling wave solutions $\bm{\Psi}_{c_v}(x-c_v t)$ and ${\bf \Phi}(x- c_{u,3}t)$  can be viewed as equilibria in moving frames with distinctive speeds $c_v$ and $c_{u,3}$ respectively. Given that, the above entire solution can be regarded as a ``generalized" heteroclinic orbit connecting these two equilibria ${\bf\Psi}$ and ${\bf\Phi}$.
\end{remark}

\begin{figure}[h]
\centering
\begin{subfigure}{.5\textwidth}
  \centering
  \includegraphics[width=.6\linewidth]{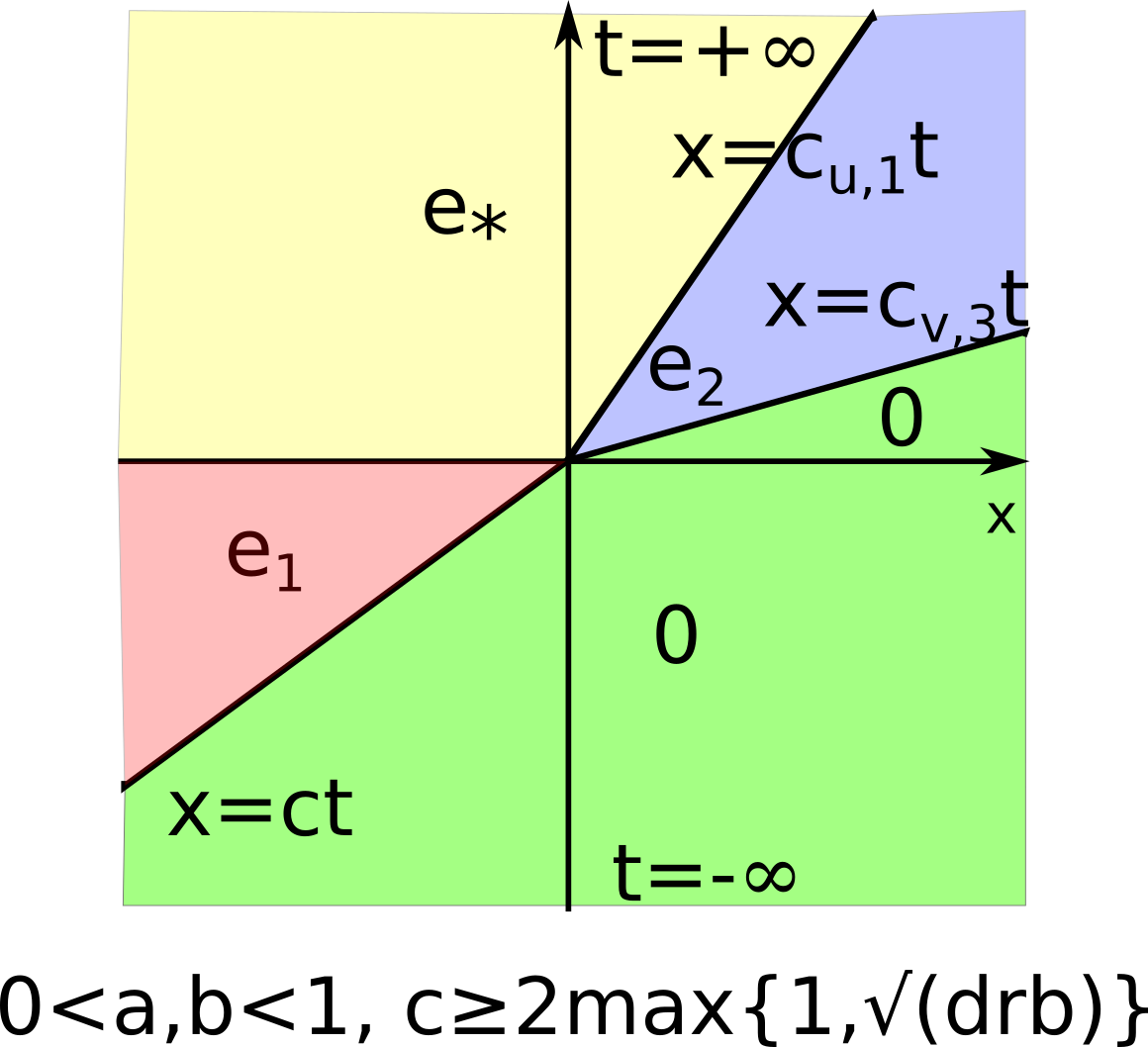}
  \label{fig:sub1}
\end{subfigure}%
\begin{subfigure}{.5\textwidth}
  \centering
  \includegraphics[width=.63\linewidth]{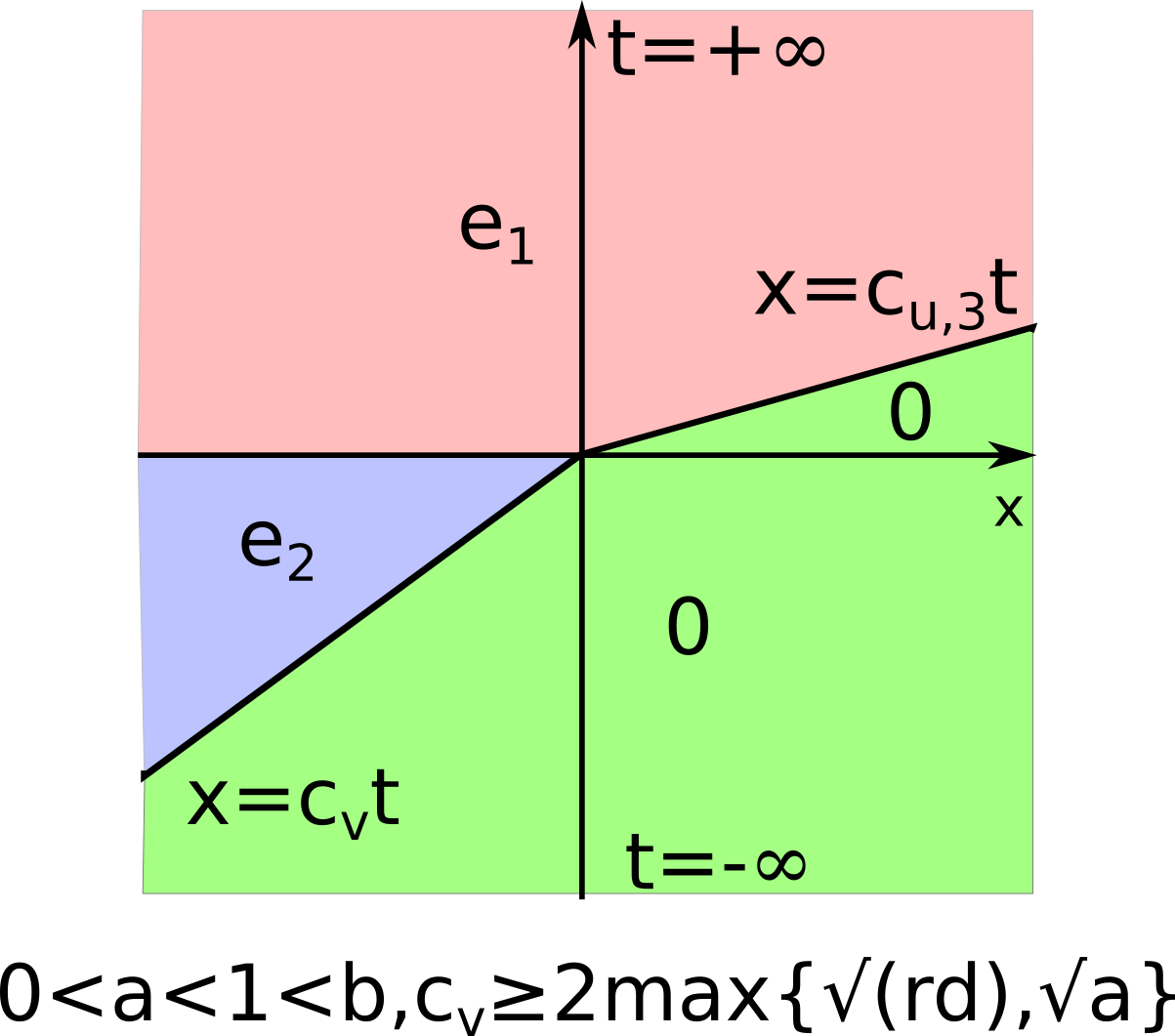}
  \label{fig:sub1}
\end{subfigure}%
\caption{Schematic plots of entire solutions in Theorem \ref{MT2} (Left) and Theorem \ref{MT-3}(Right).}
\label{fig:test}
\end{figure}


\medskip

The rest of the paper is organized as follows. In section \ref{s:2}, we study the eigenvalue problem associated to the linearized system of \eqref{main system} at $(\Phi_c,0)$. We then exploit the results from Section \ref{s:2} to establish the existence of entire solutions in section \ref{s:3}. The asymptotic behavior of diverging-type entire solutions are presented in section \ref{s:4}. The proof of Theorem \ref{MT-3} and Theorem \ref{MT2} are respectively presented in section \ref{s:5} and Section \ref{s:6}.

\section{Study of an  eigenvalue-problem }\label{s:2}

This section is devoted to the study of an eigenvalue problem of \eqref{main system} linearized at $(\Phi_c,0)$. The result of this section will be useful in the subsequent sections to construct a pair of super-solution and sub-solution of \eqref{main system}. Next, we show that \eqref{main system} has a unique entire solution  sandwiched between these super-solution and sub-solution. Introducing new notations
\[
\mu := g_{d,c,r}(\lambda), \quad \delta_v:=\frac{1}{2d}\left[\sqrt{c^2+4d(\mu+r(b-1))} -c\right],
\]
 our main results of this section read as follow.

\medskip

\begin{lemma}\label{existence-of-unstable-eigenfunction}
For each $c\ge 2 $ and and $0<\lambda<\min\{\sqrt{\frac{r}{d}}, \frac{c}{2d}\}$ such that  $g_{d,c,r}(\lambda) \geq r\max\{0, 1-b\}$.  There
exists a unique solution $\bm{\Phi}_e:=(\varphi,\psi) \in C^2(\mathbb{R})$ to
\begin{equation}\label{eigenv-eq}
\begin{cases}
\mu\varphi=\varphi_{xx}+c\varphi_{x}+(1-2\Phi_{c}(x))\varphi -a\Phi_{c}\psi,\quad x\in\R,\cr
\mu\psi=d\psi_{xx}+c\psi_{x}+r(1-b\Phi_{c}(x))\psi,\quad x\in\R,\cr
\varphi(\pm\infty)=(0,0),\quad \varphi<0<\psi,\cr
{\displaystyle \limsup_{x \to -\infty } e^{-\delta_v x} \psi(x) < +\infty, \quad \text{ and }\quad 
\lim_{x \to \infty} e^{\lambda x} \psi(x) = 1. }
\end{cases}
\end{equation}
Moreover,  there is a positive constant $\Upsilon$ such that 
\begin{equation}\label{Eq:exact-decay-of-psi-at-infty}
\lim_{x\to-\infty}e^{-\delta_v x}\psi(x)=\Upsilon,
\end{equation}
where $\delta_v \geq 0$.

\end{lemma}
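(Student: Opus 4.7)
The plan is to exploit the triangular structure of system \eqref{eigenv-eq}: since the $\psi$-equation does not involve $\varphi$, I would first construct $\psi$ and then treat the $\varphi$-equation as a forced linear ODE with source $a\Phi_c\psi$.

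For $\psi$, the coefficients of the second equation in \eqref{eigenv-eq} stabilize as $x\to\pm\infty$ since $\Phi_c(+\infty)=0$ and $\Phi_c(-\infty)=1$. At $+\infty$ the limiting characteristic polynomial $d\nu^2+c\nu+(r-\mu)=0$ factors, by the defining relation $\mu=d\lambda^2-c\lambda+r$, with roots $-\lambda$ and $\lambda-c/d$; under $\lambda<c/(2d)$ both roots are negative and $-\lambda$ is the slower decay rate. At $-\infty$ the discriminant $c^2+4d(\mu-r(1-b))$ is nonnegative by the hypothesis $\mu\geq r\max\{0,1-b\}$, producing real roots $\delta_v\geq 0$ and $-(\delta_v+c/d)<0$. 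Using Levinson's asymptotic theorem (equivalently, a contraction-mapping argument on the weak-stable manifold in the phase plane $(\psi,\psi')$), I would select the one-parameter family of solutions with the slow-decay mode at $+\infty$ and normalize so that $e^{\lambda x}\psi(x)\to 1$. To propagate positivity to all of $\mathbb{R}$, I would argue via Sturm comparison: if $x_0=\sup\{x:\psi(x)\leq 0\}$ were finite, then $\psi(x_0)=0$ and $\psi'(x_0)\geq 0$, and uniqueness of the Cauchy problem combined with the slow-decay normalization at $+\infty$ would force $\psi\equiv 0$; an alternative is to realize $\psi$ as the limit of Dirichlet principal eigenfunctions on $[-R,R]$ as $R\to\infty$. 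Applying Levinson's theorem again at $-\infty$, the solution $\psi$ is asymptotically a linear combination of the two modes $e^{\delta_v x}$ and $e^{-(\delta_v+c/d)x}$; the imposed bound $\limsup_{x\to-\infty}e^{-\delta_v x}\psi(x)<\infty$ forces the coefficient of the fast-growing mode to vanish, and positivity together with a Harnack-type argument precludes the vanishing of the remaining coefficient, yielding \eqref{Eq:exact-decay-of-psi-at-infty}.

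With $\psi$ in hand, the first equation of \eqref{eigenv-eq} becomes a forced linear ODE
\[
\varphi''+c\varphi'+(1-2\Phi_c-\mu)\varphi \;=\; a\Phi_c\psi,
\]
whose right-hand side is strictly positive and exponentially decaying at both ends. I would construct the unique solution decaying at $\pm\infty$ via variation of parameters, using two linearly independent solutions of the homogeneous equation selected by their respective exponential decay at $+\infty$ and $-\infty$; equivalently, inverting the linear operator $\partial_x^2+c\partial_x+(1-2\Phi_c-\mu)$ on a suitable exponentially weighted H\"older space. To obtain $\varphi<0$ globally, I would compare $-\varphi$ from above with an explicit supersolution (a large multiple of $\psi$, verified using the $\psi$-equation) and invoke a maximum-principle argument on the resulting linear inequality with vanishing values at $\pm\infty$.

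The principal obstacle is the \emph{simultaneous} asymptotic matching for $\psi$: guaranteeing that the one-dimensional weak-stable mode selected at $+\infty$ connects to the decaying mode at $-\infty$ with a strictly positive coefficient $\Upsilon$ is delicate and relies on the algebraic relation $\mu=g_{d,c,r}(\lambda)$ together with the monotonicity properties of $\Phi_c$ from \eqref{decay-estimate for phi_c}--\eqref{decay-estimate for phi_c-at-infty}. A secondary difficulty is the negativity of $\varphi$ in regimes where $\mu$ is small, since the homogeneous operator is close to the degenerate case $\mu=0$ where $\Phi_c'$ lies in its kernel; handling this likely requires a careful sub/supersolution construction rather than a direct maximum-principle invocation.
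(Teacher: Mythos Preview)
Your triangular decomposition---first $\psi$, then $\varphi$---matches the paper's strategy. For $\psi$, the paper proceeds by explicit super/sub-solutions (Lemma~\ref{lem:psi}) rather than Levinson's theorem, but your dynamical-systems route is legitimate and in fact close to the alternative proof the authors sketch in the appendix. One small correction: your positivity argument ``if $\psi(x_0)=0$ and $\psi'(x_0)\geq 0$ then uniqueness forces $\psi\equiv 0$'' only works when $\psi'(x_0)=0$; the case $\psi'(x_0)>0$ needs more (though your Dirichlet-eigenfunction alternative, or a phase-plane monotonicity argument as in the appendix, closes this).

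The genuine gap is in the $\varphi$ step. You correctly flag that for small $\mu$ the zeroth-order coefficient $1-2\Phi_c(x)-\mu$ is positive near $+\infty$, obstructing both the maximum principle and a clean resolvent argument on weighted spaces. Your proposed fix---comparing with $C\psi$---does not work: plugging $C\psi$ into the $\varphi$-operator and using the $\psi$-equation leaves a first-order term $c(1-\tfrac{1}{d})\psi'$ and a zeroth-order term with no controllable sign unless $d=1$ and further coincidences hold. The paper resolves this with a substitution you did not anticipate: setting $\phi=\varphi/\Phi_c$ transforms the equation into
\[
\mu\phi=\phi''+\Bigl(2\tfrac{\Phi_c'}{\Phi_c}+c\Bigr)\phi'-\Phi_c\,\phi - a\psi,
\]
whose zeroth-order coefficient $-\Phi_c$ is strictly negative on all of $\mathbb{R}$. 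The operator $\mathcal{L}_{\Phi_c}$ then generates a contraction semigroup on $C_0(\mathbb{R})$, so every $\mu>0$ lies in the resolvent set by Hille--Yosida, and the maximum principle immediately gives $\phi<0$ from the negative source $-a\psi$. This substitution is the key idea that your proposal is missing; without it, the small-$\mu$ regime is not handled.
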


\begin{lemma}\label{lem:psi}
 Given $c \geq 2$ and $\lambda \in \left(0, \frac{c}{2d}\right)$, we have the following results. 
\begin{itemize}
\item[(a)] There exists a unique solution $\psi \in C^2(\mathbb{R})$ to 
\begin{equation}\label{eigenv-eq1}
\begin{cases}
\mu \psi = d \psi_{xx} + c \psi_x + r(1-b \Phi_c(x)) \psi, \quad \text{ for }\,\,x \in \mathbb{R},\\
\psi >0, \quad  \qquad \qquad \qquad \qquad \qquad \qquad  \text{ for }\,\,x \in \mathbb{R},\\
{\displaystyle \limsup_{x \to -\infty } e^{-\delta_v x} \psi(x) < +\infty, \quad \text{ and }\quad 
\lim_{x \to \infty} e^{\lambda x} \psi(x) = 1. }
\end{cases}
\end{equation}
\item[(b)] There exists $D>0$ such that  
$$e^{-\lambda x} - D e^{-\widetilde\lambda x} \leq \psi(x) \leq e^{-\lambda x}, \quad \text{ for }x \gg 1.
$$
\item[(c)] The function $x\mapsto e^{-\delta_vx}\Psi(x)$ is decreasing, and there exists $\Upsilon>0$ such that
$$
\lim_{x \to -\infty} e^{-\delta_v x} \psi(x) = \Upsilon. 
$$
\item[(d)] In particular, if $g_{d,c,r}(\lambda) = r(1-b)$, then  $\delta_v = 0$, the function $\psi(x)$ is decreasing and there exists $\Upsilon>0$ such that
$$
\lim_{x \to -\infty} \psi(x)  = \Upsilon.
$$

\end{itemize}

\end{lemma}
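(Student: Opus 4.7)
The strategy is to combine a sub-/super-solution construction that pins down the asymptotic behavior at $+\infty$ with linear ODE asymptotic analysis (a Levinson-type theorem) at $-\infty$.

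For existence and (b), observe that $\overline\psi(x) := e^{-\lambda x}$ is a super-solution of \eqref{eigenv-eq1}:
\[ d\overline\psi'' + c\overline\psi' + r(1 - b\Phi_c)\overline\psi - \mu\overline\psi = [g_{d,c,r}(\lambda) - \mu - rb\Phi_c]\overline\psi = -rb\Phi_c\overline\psi \leq 0. \]
For the sub-solution, pick $\widetilde\lambda \in (\lambda,\,\min\{c/(2d),\,\lambda + \tau_c\})$---so that $g_{d,c,r}(\widetilde\lambda) < g_{d,c,r}(\lambda) = \mu$ by monotonicity of $g_{d,c,r}$ on $(0,c/(2d))$---and set $\underline\psi(x) := \max\{e^{-\lambda x} - De^{-\widetilde\lambda x},\,0\}$. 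A direct computation on the support of $\underline\psi$ shows that for $D \gg 1$, the positive contribution $D(\mu - g_{d,c,r}(\widetilde\lambda))e^{-\widetilde\lambda x}$ dominates the $\Phi_c$-correction (bounded by $rb\,e^{-(\lambda + \tau_c)x}$ via \eqref{decay-estimate for phi_c}), confirming $\underline\psi$ is a sub-solution. Then construct $\psi$ by Arzel\`a--Ascoli as a limit of the Dirichlet solutions $\psi_R$ on $[-R,R]$ sandwiched between $\underline\psi$ and $\overline\psi$; the sandwich persists in the limit, yielding positivity, the leading decay $\lim_{x\to\infty}e^{\lambda x}\psi(x) = 1$, and (b) simultaneously.

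To prove the bound $\limsup_{x\to-\infty}e^{-\delta_v x}\psi(x) < \infty$ in (a), I would reformulate \eqref{eigenv-eq1} as the first-order system $X' = A(x)X$ with $X = (\psi,\psi')^T$. By \eqref{decay-estimate for phi_c-at-infty}, $A(x) - A(-\infty) = O(e^{\widetilde\tau_c x})$, an exponentially small perturbation, and the autonomous limit has characteristic exponents $\delta_v \geq 0$ and $\beta_- := -c/d - \delta_v < 0$. A Levinson-type asymptotic theorem then yields a fundamental system $\{\phi_1, \phi_2\}$ with $\phi_1(x) \sim e^{\delta_v x}$ and $\phi_2(x) \sim e^{\beta_- x}$ as $x \to -\infty$. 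Because $\phi_2$ is unbounded at $-\infty$ while the constructed $\psi$ is bounded on compacts, $\psi$ must be a scalar multiple of $\phi_1$; positivity forces $\psi(x) \sim A e^{\delta_v x}$ with $A > 0$, giving both the claimed bound and, in advance, the strict positivity of $\Upsilon = A$ needed in (c). For uniqueness, let $\psi_1, \psi_2$ be two solutions and set $w := \psi_1 - \psi_2 = \psi_1 h$. Reduction of order gives $h'(x) = K e^{-cx/d}/\psi_1(x)^2$; integrability of $h'$ at $+\infty$ (where $h' \sim K e^{(2\lambda - c/d)x}$ is integrable since $2\lambda < c/d$) combined with the forced growth $|h'(x)| \sim |K|e^{-(c/d + 2\delta_v)x}$ at $-\infty$---incompatible with $w$ bounded---enforces $K = 0$, hence $h$ is constant, and the matching of leading terms at $+\infty$ then gives $h \equiv 0$.

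For (c), set $H(x) := e^{-\delta_v x}\psi(x)$. Using the defining identity $d\delta_v^2 + c\delta_v + r(1-b) = \mu$, equation \eqref{eigenv-eq1} transforms into
\[ dH'' + (2d\delta_v + c)H' + rb(1 - \Phi_c)H = 0. \]
Multiplying by the integrating factor $e^{\alpha x}$ with $\alpha := 2\delta_v + c/d > 0$ yields
\[ (d e^{\alpha x}H')' = -rb\,e^{\alpha x}(1 - \Phi_c)H. \]
The right-hand side is $\leq 0$ and integrable on $(-\infty, x]$ using $H \to A$ at $-\infty$ and $1 - \Phi_c = O(e^{\widetilde\tau_c x})$ from \eqref{decay-estimate for phi_c-at-infty}. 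Integrating from $-\infty$---where $e^{\alpha x}H'(x) \to 0$ since $H \to A$ implies $H' \to 0$ asymptotically---gives $H'(x) \leq 0$ on $\R$; hence $H$ is non-increasing, and $\lim_{x\to-\infty}H(x) = A > 0$ from the Levinson-type analysis above completes (c). Claim (d) is immediate: when $g_{d,c,r}(\lambda) = r(1-b)$, one has $\delta_v = 0$, so $H \equiv \psi$, and (c) supplies the monotonicity and positive limit of $\psi$ itself. The main obstacle is the Levinson-type asymptotic analysis at $-\infty$, which is the common input both for the boundedness in (a) and for the strict positivity $\Upsilon > 0$ in (c); all remaining steps---barrier construction at $+\infty$, reduction-of-order uniqueness, and the integrating-factor monotonicity---are relatively mechanical once this asymptotic control is secured.
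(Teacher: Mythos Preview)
Your overall strategy---sub/super-solutions for existence and (b), reduction of order for uniqueness, and an integrating-factor identity for (c)---matches the paper's. However, there is a genuine gap in the existence step at $-\infty$.

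You claim that ``$\phi_2$ is unbounded at $-\infty$ while the constructed $\psi$ is bounded on compacts, so $\psi$ must be a scalar multiple of $\phi_1$.'' Every continuous function is bounded on compacts, so this deduces nothing. Your only a priori control on $\psi$ from the barrier construction is $0\le\psi(x)\le e^{-\lambda x}$, and $e^{-\lambda x}\to\infty$ as $x\to-\infty$, so boundedness of $\psi$ at $-\infty$ is not yet established. The argument can be repaired: since $\beta_-=-c/d-\delta_v\le -c/(2d)<-\lambda$, the mode $\phi_2\sim e^{\beta_- x}$ grows strictly faster than your super-solution $e^{-\lambda x}$ as $x\to-\infty$; hence a nonzero $\phi_2$-component would violate $\psi\le e^{-\lambda x}$ (if positive) or $\psi>0$ (if negative). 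You must make this comparison explicit. The paper avoids Levinson entirely by building the $-\infty$ control into the super-solution: it uses a piecewise barrier equal to $K_0 e^{\delta_v x}(1-e^{\varepsilon_2 x})$ for $x\ll -1$ (and $e^{-\lambda x}$ for $x$ large), which directly yields $\psi(x)\le K_0 e^{\delta_v x}$ and hence the $\limsup$ bound and the positivity of $\Upsilon$.

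A smaller issue in (c): ``$H\to A$ implies $H'\to 0$'' is false in general. What you actually need is $e^{\alpha x}H'(x)\to 0$ as $x\to-\infty$, and this follows because $e^{\alpha x}H'=e^{(\delta_v+c/d)x}(\psi'-\delta_v\psi)$ with $\delta_v+c/d>0$ and $\psi,\psi'$ bounded (the latter by elliptic regularity once $\psi$ is known to be bounded). This is precisely how the paper argues: it shows $(\psi'-\delta_v\psi)e^{-\tilde\delta_v x}$ is strictly decreasing with limit $0$ at $-\infty$, hence negative everywhere, which gives $(e^{-\delta_v x}\psi)'<0$.
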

\begin{proof}
First, we prove the uniqueness part of (a). Let $\psi_1(x), \psi_2(x)$ be two solutions to \eqref{eigenv-eq1}. Let $\widetilde{\psi}(x)=\frac{\psi_1(x)}{\psi_2(x)}$. We shall show that $\widetilde{\psi}(x)\equiv 1$.  
 By setting $h(x)=e^{\frac{c}{d}x}$ and $k(x)=\frac{h(x)}{d}\left(r(1-b\Phi_c(x))-\mu\right)$, both $\psi_1(x)$ and $\psi_2(x)$ satisfies 
$$ 
\left( h(x)\psi'(x) \right)'+k(x)\psi(x)=0.
$$
 By Lagrange identity, it holds that for $i\neq j$, $i,j\in\{1,2\}$, 
$$ 
\left(h(x)\left(\psi_i'\psi_j-\psi_j'\psi_i\right)\right)'=0.
$$
As a result, there is a constant  $c_{ij}\in\mathbb{R}$ such that 
$$ 
h(x)\left(\psi_i'\psi_j-\psi_j'\psi_i\right)=c_{ij},\quad \forall x\in\mathbb{R},
$$
equivalently, 
$$ 
\left(\frac{\psi_i}{\psi_j}\right)'(x)=\frac{c_{ij}}{h(x)[\psi_j(x)]^2},\quad \forall x\in\mathbb{R}.
$$
 Integrating both sides yields
$$ 
\left(\frac{\psi_i}{\psi_j}\right)(y) =  \left(\frac{\psi_i}{\psi_j}\right)(x) + \int_{x}^{y}\frac{c_{ij}}{h(s)[\psi_j(s)]^2}ds,\quad \forall \ y\leq x\in\mathbb{R}.
$$
Letting $y\to\infty$ in this equation and exploiting that $\displaystyle \lim_{x \to +\infty} e^{\lambda x}\psi_i(x) = 1$ for $i=1,2$,  we obtain 
\begin{equation}\label{eq:wwww1} 
1=  \left(\frac{\psi_i}{\psi_j}\right)(x) + c_{ij}\int_{x}^{\infty}\frac{1}{h(s)[\psi_j(s)]^2}ds,\quad \forall y\in\mathbb{R},
\end{equation}
 which, due to the fact that $\psi_1(x), \psi_2(x)>0$ for every $x\in\mathbb{R},$ yields
\begin{equation}\label{eq:wwww2}
1\geq c_{ij}\int_{x}^{\infty}\frac{1}{h(s)[\psi_j(s)]^2}ds,\quad \forall x\in\mathbb{R}.
\end{equation}
Observe, however, that for $s \to -\infty$,   
$$
\limsup_{s \to -\infty} h(s)[\psi_j(s)]^2 = \limsup_{s \to -\infty}   \exp((\frac{c}{d} + 2\delta_v)s) [\exp(-\delta_v s) \psi_j(s)]^2 \leq 1, 
$$ since $\delta_v  \geq-\frac{c}{2d}$. Hence, we deduce that  $$ \int_{-\infty}^{\infty}\frac{1}{h(s)[\psi_j(s)]^2}ds=\infty,$$ 
which, together with \eqref{eq:wwww2}, shows that 
\begin{equation}\label{eq:c}
c_{ij}\leq 0.
\end{equation}
Combining \eqref{eq:wwww1} and \eqref{eq:c}, we deduce that  $\psi_i(x) \geq \psi_j(x)$ for $x \in \mathbb{R}$.
Since $i\neq j$ are arbitrary chosen in $\{1,2\}$, we conclude that $\psi_1(x) = \psi_2(x)$ for every $x\in\mathbb{R}$, which proves the uniqueness part of (a).

For existence, we now construct a pair of super- and sub-solutions. First, define
$$
\underline\psi_1(x;\lambda,\widetilde\lambda,D):= \max\{0,e^{-\lambda x} -D e^{-\widetilde\lambda x}\} = \begin{cases} e^{-\lambda x} -D e^{-\widetilde\lambda x} &\text{ for }x > (\widetilde\lambda - \lambda)^{-1} \log D,\\
0 &\text{ for }x \leq  (\widetilde\lambda - \lambda)^{-1}\log D, \end{cases}
$$
where $\widetilde\lambda \in (\lambda, \lambda+\tau_c)$ is chosen close enough to $\lambda$ so that $g_{d,c,r}(\widetilde\lambda) < g_{d,c,r}(\lambda)$, thanks to the fact that $g'_{d,c,r}(\lambda) = 2d\lambda -c <0$. Recall that $\tau_c=\frac{1}{2}(c-\sqrt{c^2-4})\geq0$.  We claim that $\underline\psi_{1}$ is a weak sub-solution of \eqref{eigenv-eq1} for $D \gg 1$. Indeed, introducing the notations $\mathcal{L}:=d\partial_x^2+c\partial_x+r(1-b\Phi_c)$ and $\underline\psi_{\lambda,\widetilde\lambda,D}:=e^{-\lambda x}-De^{-\widetilde\lambda x}$, we have
\[
\begin{aligned}
 - \mathcal{L}(\underline\psi_{\lambda,\widetilde\lambda,D}) + \mu \underline\psi_{\lambda,\widetilde\lambda,D}&= rb\Phi_c(x)e^{-\lambda x}+D\left[ (g_{d,c,r}(\widetilde{\lambda})-g_{d,c,r}(\lambda)) -rb\Phi_c(x) \right]e^{-\widetilde\lambda x}\cr
&{\overset{\text{\eqref{decay-estimate for phi_c}}}{\leq}}  rb e^{-\tau_c x}e^{-\lambda x}-D(g_{d,c,r}(\lambda)-g_{d,c,r}(\widetilde{\lambda}))e^{-\widetilde\lambda x}-Drb\Phi_{c}(x)e^{-\widetilde\lambda x}\cr
&\leq rbe^{-(\lambda+\tau_c)x}-D(g_{d,c,r}(\lambda)-g_{d,c,r}(\widetilde{\lambda}))e^{-\widetilde\lambda x}\cr 
&= \left( rb e^{-(\tau_c+\lambda-\widetilde{\lambda})x}-D(g_{d,c,r}(\lambda)-g_{d,c,r}(\widetilde{\lambda}))\right)e^{-\widetilde{\lambda}x} \cr
&\leq\left( rb  -D(g_{d,c,r}(\lambda)-g_{d,c,r}(\widetilde{\lambda}))\right)e^{-\widetilde{\lambda}x} \leq  0,
\end{aligned}
\]
provided $D  \geq (rb)/(g_{d,c,r}(\lambda)-g_{d,c,r}(\widetilde{\lambda})) $.


Next, we construct a super-solution $\overline\psi_1$. Let $\varepsilon_2 \in (0, \widetilde{\tau}_c)$ where $\widetilde{\tau}_c=\frac{1}{2}(\sqrt{c^2+4}-c)$ and  $x_2 \ll -1$ in the sense of $x_2<0$ and $|x_2|$ sufficiently large. We define
$$
\overline\psi_1(x; \delta_v,\varepsilon_2, \lambda, x_2):= \begin{cases} K_0 e^{\delta_v x}(1 -e^{\varepsilon_2  x}) &\text{ for }x \leq  x_2,\\
e^{-\lambda x} &\text{ for }x > x_2, \end{cases}
$$
where $K_0 := \frac{e^{-\lambda x_2}}{e^{\delta_v x_2}(1 -e^{\varepsilon_2  x_2})}$. Since $e^{-\lambda x}$ is obviously a super-solution in $\mathbb{R}$, it remains to show that $e^{\delta_v x}(1-e^{  \varepsilon_2  x})$, and thus $K_0e^{\delta_v x}(1-e^{\varepsilon_2  x})$, is a super-solution for $x \ll -1$. Indeed,  noting that 
\[
\begin{aligned}
d\delta_v^2+c\delta_v+r(1-b)-\mu&=0,\\
-\mathcal{L}e^{\delta_v x}+\mu e^{\delta_v x}+rb(1-\Phi_c)e^{\delta_vx}&=0,
\end{aligned}
\]
 we have, for $x\ll -1$,
\[
\begin{aligned}
&\quad -\mathcal{L} (e^{\delta_v x}(1-e^{  \varepsilon_2  x})) +\mu (e^{\delta_v x}(1-e^{  \varepsilon_2  x}))\\
 &=   e^{(\delta_v + \varepsilon_2) x} \left[d(\delta_v + \varepsilon_2)^2 + c (\delta_v + \varepsilon_2) + (r(1-b) - \mu) -rb(1-\Phi_c(x))e^{-\varepsilon_2 x}(1-e^{\varepsilon_2 x})\right]  \\
 &=   e^{(\delta_v + \varepsilon_2) x} \left[d\varepsilon_2(2\delta_v + \varepsilon_2) + c \varepsilon_2  -rb(1-\Phi_c(x))e^{-\varepsilon_2 x}(1-e^{\varepsilon_2 x})\right]  \\
&\overset{\eqref{decay-estimate for phi_c-at-infty}}{\geq}e^{(\delta_v + \varepsilon_2) x} \big[d\varepsilon_2(2\delta_v +\frac{c}{d}+ \varepsilon_2)   -O(e^{(\widetilde{\tau}_c-\varepsilon_2)x})\big]   \\
%
&>0,
\end{aligned}
\]
where the last inequality follows from the facts that $\delta_v\geq-c/{2d}$ and $\widetilde{\tau}_c - \varepsilon_2 >0$, so that the term in the square bracket is positive for $x\ll -1$. Hence, we have proved that $\overline\psi_1$ is a super-solution of \eqref{eigenv-eq1}. Now, fix $\varepsilon_2 \in (0,\widetilde{\tau}_c)$, then  $\underline\psi_1 (x) < \overline\psi_1(x)$ in $\mathbb{R}$ provided $D \gg 1$ and $x_2 \ll -1$. It follows from standard method of super- and sub-solutions that \eqref{eigenv-eq1} has a solution $\psi$ satisfying $\underline\psi_1(x) \leq \psi(x) \leq \overline\psi_1(x)$ in $\mathbb{R}$. This proves (a) and (b). We observe in addition that 
\begin{equation}\label{eq:psi121}
\psi(x) \leq \overline\psi_1 \leq  K_0e^{\delta_v x}  \quad \text{ for }x \ll -1.
\end{equation}

\medskip
 Next, we prove that (c) holds. Indeed, let $\tilde{\delta}_v$ denotes the negative root of 
$$ 
0=d\delta^2+c\delta + r(1-b)+\mu.
$$
Using the fact that $d\delta_v\tilde{\delta}_v=r(1-b)-\mu $ and $ d(\delta+\tilde{\delta}_v)=-c$, we obtain
$$ 
\frac{d}{dx}\left((\psi'(x)-\delta_v\psi(x))e^{-\tilde{\delta}_vx} \right)= \frac{e^{-\tilde{\delta}_vx}}{d}\left(d\psi''(x)+c\psi'(x)+(r(1-b)-\mu)\psi(x)\right)=\frac{rb}{d}\psi(x)(\Phi_c(x)-1)e^{-\tilde{\delta}_vx}<0.
$$
That is the function $x\mapsto(\psi'(x)-\delta_v\psi(x))e^{-\tilde{\delta}_vx}$ is strictly decreasing. Since $\psi\in C^b_{\rm unif}(\mathbb{R})$, standard Hanack's inequalities for elliptic equations imply that $\psi'\in  C^b_{\rm unif}(\mathbb{R})$, and hence $\lim_{x\to-\infty}(\psi'(x)-\delta_v\psi(x))e^{-\tilde{\delta}_vx}=0$. Thus $(\psi'(x)-\delta_v\psi(x))e^{-\tilde{\delta}_vx}<0$ for every $x\in\mathbb{R}$. Which implies that $\frac{d}{dx}(e^{-\delta_v x}\psi(x))<0$ for every $x\in\mathbb{R}$. This together with \eqref{eq:psi121} complete the proof of (c).

Finally, since $(d)$ follows from $(c)$, the proof of the lemma is complete.

\end{proof}

\begin{remark}
If, in addition, we assume that $\delta_v>0$, and given  $x_2 \ll -1$, we have $K_0(1-e^{\varepsilon_2  x})<e^{-(\lambda + \delta_v) x}$ for any $x\leq x_2$, yielding 
$$
{\overline\psi_1}(x) = e^{\delta_v x} \min\{K_0(1-e^{\varepsilon_2  x}), e^{-(\lambda + \delta_v) x}\} = \min\{K_0e^{-\delta_v x}(1-e^{ - \varepsilon_2  x}), e^{-\lambda x}\}, \quad \forall x\leq x_2.
$$
\end{remark}
\begin{remark}We can prove a more general result using dynamical systems and functional analysis argument; see the appendix for details.\end{remark}

Next we present the  proof of Lemma \ref{existence-of-unstable-eigenfunction}. 

\medskip

\begin{proof}[Proof of Lemma \ref{existence-of-unstable-eigenfunction} {\rm(i)}] Fix $\lambda \in \Lambda_{d,c,r,b}$, and let $\psi(x)$ be given by Lemma \ref{lem:psi}. It follows from  \eqref{eigenv-eq1} 
that $\psi(x)$ is a positive eigenfunction corresponding to $\mu=g_{d,c,r}(\lambda)$ for the linear operator $\mathcal{L}$ arising from the second equation of the elliptic system \eqref{eigenv-eq}. Moreover, Lemma \ref{lem:psi} (a)-(c) say that $\psi(x)$ satisfies the desired asymptotic behaviors at $x=\pm\infty$, including  \eqref{Eq:exact-decay-of-psi-at-infty},  as stated in Lemma \ref{existence-of-unstable-eigenfunction}.  Since the uniqueness of $\psi$ has also been proved in Lemma \ref{lem:psi}(a), it remains to determine $\varphi$ by solving the first equation of \eqref{eigenv-eq}. 

Note that  $\varphi$ solves the first equation in \eqref{eigenv-eq} if and only if the function $\phi=\frac{\varphi}{\Phi_{c}}$ satisfies 
\begin{equation}\label{a-2}
\mu \phi=\phi'' +\left(2\frac{\Phi_c'}{\Phi_c}+c\right)\phi' -\Phi_{c}\phi - a\psi.
\end{equation} 

Let $C_{0}(\R)$ denotes the Banach space 
$$ 
C_{0}(\R):=\{ u \in C(\R)\ \mid \lim\limits_{x\to\pm\infty}\ u(x)=0 \} \quad 
$$
endowed with the sup-norm $\|u\|_{C_{0}(\R)}:=\|u\|_{\infty}$. Note that, since $\Phi_c(x)>0$ for every $x\in\R$, the linear operator
$$ \mathcal{L}_{\Phi_c} (\phi):=\phi'' +\left(2\frac{\Phi_c'}{\Phi_c}+c\right)\phi' -\Phi_{c}\phi$$
generates an analytic semigroup of contractions on $C_{0}(\R)$. Hence, the Hille-Yosida Theorem implies that for every $\mu>0$ (and $\mu = g_{d,c,r}(\lambda)$ in particular), one can solve \eqref{a-2} for a unique solution $\widetilde{\phi}\in C_{0}(\R)$. 
Moreover, since $-a\psi(x)<0$ for every $x\in\R$, the maximum principle implies that $\widetilde{\phi}(x)<0$ for every $x\in\R$. Therefore, taking $\varphi=\widetilde{\phi}\Phi_c$, it holds that $(\psi,\varphi)$ solves \eqref{eigenv-eq}.
\end{proof}

\begin{remark}\label{rk1} We note from the proof of Lemma \ref{existence-of-unstable-eigenfunction} that $\frac{\varphi}{\Phi_c}\in C_0(\mathbb{R})$, that is,
$$ 
\lim_{|x|\to\infty}\frac{\varphi(x)}{\Phi_c(x)}=0\quad \text{and}\quad \frac{\varphi}{\Phi_c}\in C^b_{\rm unif}(\R).
$$
\end{remark}

\section{Existence of entire solutions} \label{s:3}
In this section we construct entire solutions of \eqref{main system}. Thanks to Lemma \ref{existence-of-unstable-eigenfunction}, we are able to construct a pair of super-solutions and sub-solution of \eqref{main system} which implies the existence of 
a unique entire solution sandwiched between them. The asymptotic behavior of these entire solution at $t=-\infty$ can then be inferred from the behaviors of the pair of super-sub-solutions.

\subsection{Existence of entire solutions of Theorem \ref{Main Tm}.}\label{sec-3} 
Through this subsection we fix $c\geq 2$, $\lambda\in\Lambda_{d,c,r,b}$, let $g_{d,c,r}(\lambda)=\mu$ and $(\varphi,\psi)$ be the solution of \eqref{eigenv-eq} given by Lemma \ref{existence-of-unstable-eigenfunction}. We introduce the co-moving frame $\xi=x-ct$ and rewrite \eqref{main system} as 
\begin{equation}\label{rewrite-main-system} 
{\bf u}_t=\bm{\mathcal{A}}_c({\bf u})
\end{equation}
where ${\bf u}=(u,v)$ and 
\[\bm{\mathcal{A}}_c({\bf u})=(\mathcal{A}_{1,c}({\bf u}),\mathcal{A}_{2,c}({\bf u})):=(u_{\xi\xi}+cu_\xi+u(1-u-av), dv_{\xi\xi}+cv_\xi+rv(1-v-bu)).\]
We note that $(u(t,\xi),v(t,\xi))$ is an entire solution of \eqref{rewrite-main-system} if and only if $(u(t,x-ct),v(t,x-ct))$ is entire solution of \eqref{main system}. Hence in the following we only need to prove the existence of entire solution of \eqref{rewrite-main-system}.

For the convenience of stating the main results of this section, we first introduce the following lemma.
 \begin{lemma}\label{asymptotic-behavior-of-p-and-q}  Given $M>0$ and $0<\varepsilon<\frac{\mu}{M}$, 
both components of the solution, $p(t)$ and $q(t)$, to the system
\begin{equation}\label{p(t)-q(t)-ODE}
\begin{cases}
\dot{p}=\mu+\varepsilon Me^{p(t)},\quad p(0)= - \log\left(1-\frac{\varepsilon M}{\mu}\right),\cr
\dot{q}=\mu-\varepsilon M e^{q(t)}.\quad q(0)=-\log\left(1+\frac{\varepsilon M}{\mu}\right).
\end{cases}
\end{equation}
 are increasing functions which satisfy 
 \begin{subequations}\label{eq:pqprop}
 \begin{align}
 \lim_{t\to-\infty}|p(t)-\mu t|=\lim_{t\to-\infty}|q(t)-\mu t|=0, \label{eq:pqprop1}\\
 p(t)\geq q(t),\quad \forall\ t\leq 0, \label{eq:pqprop3}\\
 \lim_{t\to+\infty} e^{q(t)}=\frac{\mu}{\varepsilon M}. \label{eq:pqprop2}
 \end{align}
 \end{subequations}
 \end{lemma}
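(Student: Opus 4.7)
The plan is to solve both ODEs explicitly by separation of variables—each is an autonomous first-order ODE that integrates in closed form—and then read off the four required properties directly from the resulting formulas.

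For $p$, I would separate variables and make the substitution $y=e^{-p}$, turning $dt=dp/(\mu+\varepsilon M e^p)$ into $dt=-dy/(\mu y+\varepsilon M)$, which integrates explicitly. Using the initial datum $p(0)=-\log(1-\varepsilon M/\mu)$ (well-defined because $\varepsilon M/\mu<1$ by hypothesis) to fix the constant of integration gives
\[
e^{p(t)}=\frac{\mu e^{\mu t}}{\mu-\varepsilon M e^{\mu t}},\qquad t<T_*:=\mu^{-1}\log\!\left(\frac{\mu}{\varepsilon M}\right).
\]
The analogous computation for $q$, with substitution $z=e^{-q}$ and initial datum $q(0)=-\log(1+\varepsilon M/\mu)$, yields
\[
e^{q(t)}=\frac{\mu e^{\mu t}}{\mu+\varepsilon M e^{\mu t}},\qquad t\in\mathbb{R}.
\]

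From these closed forms the four claims are immediate. Monotonicity of $p$ is built into the ODE since $\dot p=\mu+\varepsilon M e^{p}>0$; for $q$, the formula shows $e^{q(t)}<\mu/(\varepsilon M)$ for all $t$, hence $\dot q=\mu-\varepsilon M e^{q}>0$. For \eqref{eq:pqprop1}, note that as $t\to-\infty$ we have $e^{\mu t}\to 0$, so both denominators tend to $\mu$, giving $e^{p(t)-\mu t},\,e^{q(t)-\mu t}\to 1$, i.e. $p(t)-\mu t,\,q(t)-\mu t\to 0$. Inequality \eqref{eq:pqprop3} is visible by comparing denominators: for every $t$ in the domain of $p$ (in particular for $t\leq 0$) one has $0<\mu-\varepsilon M e^{\mu t}\leq \mu+\varepsilon M e^{\mu t}$, so $e^{p(t)}\geq e^{q(t)}$. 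Finally, \eqref{eq:pqprop2} follows by rewriting $e^{q(t)}=\mu/(\mu e^{-\mu t}+\varepsilon M)$ and letting $t\to+\infty$.

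There is no substantive obstacle in this lemma: the ODEs are elementary and explicit, and the statements are verifiable by direct inspection of the formulas. The only points requiring a small amount of care are (i) checking that the integration constants are consistent with the prescribed initial data, and (ii) observing that the standing assumption $\varepsilon M<\mu$ is precisely what makes $p(0)$ finite and ensures $p$ exists on a neighborhood of $0$, while the same assumption guarantees $e^{q(t)}$ stays strictly below $\mu/(\varepsilon M)$ so that $q$ is globally increasing.
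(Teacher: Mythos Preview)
Your proposal is correct and follows essentially the same approach as the paper: both solve the ODEs explicitly to obtain the closed-form expressions $e^{p(t)}=\mu e^{\mu t}/(\mu-\varepsilon M e^{\mu t})$ and $e^{q(t)}=\mu e^{\mu t}/(\mu+\varepsilon M e^{\mu t})$, and then read off monotonicity and \eqref{eq:pqprop} directly from these formulas. Your version simply spells out the verifications in slightly more detail than the paper does.
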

\begin{proof}
Solve explicitly, we have 
\begin{equation}\label{p-formula}
p(t)=\mu t -\log\left(1-\frac{\varepsilon M\text{exp}(\mu t)}{\mu}\right) = -\log\left(- \frac{\varepsilon M}{\mu} +\exp(-\mu t) \right),  \quad \text{ for }t \leq 0,
\end{equation}
and 
\begin{equation}\label{q-formula}
q(t)=\mu t -\log\left(1+\frac{\varepsilon M\text{exp}(\mu t)}{\mu}\right) = -\log\left(\frac{\varepsilon M}{\mu} +\exp(-\mu t) \right),\quad t\in\R. 
\end{equation}
It follows that $p(t)$ and $q(t)$ are increasing and satisfies \eqref{eq:pqprop}
\end{proof}
%
We remark that the functions $p(t)$ and $q(t)$ have also been used in \cite{FuMoNi} to prove similar results for the Allen-Cahn equation to our main results here.  We also introduce the following definitions.
%
\begin{definition}  Let ${\bf u}_1=(u_1, v_1), {\bf u}_0=(u_0, v_0)\in\R^2$, and ${\bf u}(t,\xi)=(u(t,\xi),v(t,\xi))$ be a piecewise smooth function on $I\times\R$, where $I\subseteq\R$ is an open interval. 
\begin{itemize}
\item[(i)] We say that ${\bf u}_0\le_{K}{\bf u}_1$ if  $u_0\leq u_1$ and $v_0\geq v_1$.
\item[(ii)] The function ${\bf u}(t,\xi)=(u(t,\xi),v(t,\xi))$ is a sub-solution of \eqref{rewrite-main-system} on $I\times\R$ if 
$$ 
{\bf u}_t\le_{K} \bm{\mathcal{A}}_c({\bf u}), \quad \text{ in the weak sense for }(t,\xi) \in I \times \mathbb{R}.
$$
\item[(iii)] The function ${\bf u}(t,\xi)=(u(t,\xi),v(t,\xi))$ is a super-solution of \eqref{rewrite-main-system} on $I\times\R$ if 
$$ 
\bm{\mathcal{A}}_c({\bf u})\le_{K} {\bf u}_t, \quad \text{ in the weak sense for }(t,x) \in I \times \mathbb{R}.
$$
\end{itemize}
For more precise definition of weak super-sub-solutions, we refer to \cite[Sect. 2.1]{LeoLam2018}.
\end{definition}

The following result is well known.
\begin{prop}[Comparison principle for \eqref{rewrite-main-system}]\label{Comparison-principle}
Suppose that 
\[
\underline{\bm{\Phi}}(t,\xi):=(\underline{u}(t,\xi),\overline{v}(t,\xi)), \quad \overline{\bm{\Phi}}(t,\xi):=(\overline{u}(t,\xi),\underline{v}(t,\xi))\in C([t_0,t_0+T)\times\R)\cap C^{1,2}((t_0,t_0+T)\times\R)
\]
are respectively sub-solution and super-solution of \eqref{rewrite-main-system} on $(t_0,t_0+T)\times\R$. If $\underline{\bm{\Phi}}(t_0,\xi)\leq_K \overline{\bm{\Phi}}(t_0,\xi)$ for every $\xi\in\R$, then
$$ 
\underline{\bm{\Phi}}(t_0+t,\xi))\leq_K \overline{\bm{\Phi}}(t_0+t,\xi)),\quad \forall\ 0< t<T, \ \xi\in\R.
$$
\end{prop}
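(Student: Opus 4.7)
The plan is to exploit the classical observation that, under the $\leq_K$ ordering used by the authors, the competition system becomes a \emph{cooperative} parabolic system, so the result reduces to the standard comparison principle for cooperative (quasi-monotone) parabolic systems.

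First I would introduce the difference functions
\[
U(t,\xi) := \overline{u}(t,\xi)-\underline{u}(t,\xi), \qquad V(t,\xi) := \overline{v}(t,\xi)-\underline{v}(t,\xi),
\]
which by hypothesis satisfy $U(t_0,\xi)\geq 0$ and $V(t_0,\xi)\geq 0$. Subtracting the super- and sub-solution inequalities defining $\overline{\bm{\Phi}}$ and $\underline{\bm{\Phi}}$, and algebraically rearranging the reaction terms (using the identity $\overline{u}\underline{v}-\underline{u}\overline{v} = \overline{v}U-\overline{u}V$, and its analogue in the second component), one arrives at differential inequalities of the form
\[
\begin{cases}
U_t \;\geq\; U_{\xi\xi}+c U_\xi + c_{11}(t,\xi)\,U + c_{12}(t,\xi)\,V,\\[2pt]
V_t \;\geq\; d V_{\xi\xi}+c V_\xi + c_{22}(t,\xi)\,V + c_{21}(t,\xi)\,U,
\end{cases}
\]
holding in the weak sense on $(t_0,t_0+T)\times\mathbb{R}$, with off-diagonal coefficients
\[
c_{12}(t,\xi)=a\,\overline{u}(t,\xi)\geq 0,\qquad c_{21}(t,\xi)=rb\,\underline{v}(t,\xi)\geq 0,
\]
since the densities are nonnegative. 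This is exactly the cooperative/quasi-monotone structure.

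Next I would invoke the parabolic maximum principle for cooperative systems: because the coefficients $c_{ij}$ are bounded on $(t_0,t_0+T)\times\mathbb{R}$ (sub- and super-solutions are taken in $[0,1]$-type classes here) and $(U,V)\geq (0,0)$ at $t=t_0$, the pair $(U,V)$ remains nonnegative on $[t_0,t_0+T)\times\mathbb{R}$. The standard device is to consider $(\widetilde U,\widetilde V):=(e^{-Kt}U,e^{-Kt}V)$ with $K$ large enough so that $K>\max|c_{ii}|$, which eliminates the bad linear terms, and to combine the two inequalities into a single scalar inequality by testing against a suitably truncated nonnegative test function (or by classical Phragm\'en--Lindel\"of-type arguments when pointwise $C^{1,2}$ regularity fails and one must work with weak sub- and super-solutions in the sense of \cite{LeoLam2018}). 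Unwinding gives $U\geq 0$ and $V\geq 0$, which is precisely $\underline{\bm{\Phi}}\leq_K \overline{\bm{\Phi}}$ for all $t\in[t_0,t_0+T)$.

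The only mild obstacles are (i) making sure the cooperative maximum principle applies on the unbounded domain $\mathbb{R}$, which is handled by the standard exponential-growth barrier argument since the $c_{ij}$ are uniformly bounded, and (ii) interpreting the inequalities in the weak sense consistent with \cite[Sect.~2.1]{LeoLam2018}; both are routine, which is why the authors state this result without proof.
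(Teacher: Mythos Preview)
Your proposal is correct and is the standard route to this result; the paper itself gives no proof of Proposition~\ref{Comparison-principle}, simply declaring that ``the following result is well known.'' Your reduction to a linear cooperative system for $(U,V)=(\overline u-\underline u,\,\overline v-\underline v)$ with nonnegative off-diagonal coefficients $c_{12}=a\overline u$ and $c_{21}=rb\underline v$, followed by the parabolic maximum principle with an $e^{-Kt}$ weight and the usual unbounded-domain barrier, is exactly the argument one would supply if asked to fill in the details.
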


We now set 
\begin{subequations}
\begin{align}
\underline{\bm{\Phi}}_\star(t,\xi)&=(\underline{u}_\star(t,\xi),\overline{v}_\star(t,\xi)):=\bm{\Phi}_c(\xi)+\varepsilon e^{p(t)}\bm{\Phi}_e(\xi),\quad t\leq 0, \ \xi\in\R, \label{super-sol-of-main-system}\\
\overline{\bm{\Phi}}_\star(t,\xi)&=(\overline{u}_\star(t,\xi),\underline{v}_\star(t,\xi)):=\bm{\Phi}_c(\xi)+\varepsilon e^{q(t)}\bm{\Phi}_e(\xi), \quad (t,\xi)\in\R\times\R, \label{sub-sol-of-main-system}
\end{align}
\end{subequations}
where $\bm{\Phi}_e=(\varphi,\psi)$ is the solution of \eqref{eigenv-eq} given by Lemma \ref{existence-of-unstable-eigenfunction} (i),  $p(t)$ and $q(t)$ are given by Lemma \ref{asymptotic-behavior-of-p-and-q},
and state our main result in this section. 

\begin{theorem}\label{existence-of-entire-solution}
Let $c\geq 2$ and $\lambda\in(0,\sqrt{r/d})$ be given such that $g_{d,c,r}(\lambda)>0$ and $g_{d,c,r}(\lambda) \geq r(1-b)$. Let $M> \max\{\|\varphi+a\psi\|_{\infty},r\|b\varphi+\psi\|_{\infty}\}$ and $0<\varepsilon<\frac{\mu}{M}$.  There is a unique entire solution $\bm{\Phi}_*(t,\xi):=(u_*(t,\xi),v_*(t,\xi))$ of \eqref{rewrite-main-system} satisfying for $(t,\xi)\in(-\infty, 0]\times\R$ that 
\begin{equation}\label{asymptotic at -infty}
\underline{\bm{\Phi}}_\star(t,\xi)\leq_K \bm{\Phi}_*(t,\xi)\leq_K \overline{\bm{\Phi}}_\star(t,\xi).
\end{equation}
\end{theorem}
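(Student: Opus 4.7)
First I will verify that $\underline{\bm\Phi}_\star$ and $\overline{\bm\Phi}_\star$ are weak sub- and super-solutions of \eqref{rewrite-main-system} in the $\leq_K$ sense on $(-\infty,0]\times\mathbb R$. Substituting $\underline{\bm\Phi}_\star=\bm\Phi_c+\varepsilon e^{p(t)}\bm\Phi_e$ into $\bm{\mathcal A}_c$ and invoking the traveling-wave equation for $\bm\Phi_c$ together with the eigenvalue equation \eqref{eigenv-eq} satisfied by $\bm\Phi_e=(\varphi,\psi)$, all $O(\varepsilon e^{p(t)})$ contributions cancel and what remains on the right is a quadratic remainder proportional to $\varepsilon^2 e^{2p(t)}\bigl(\varphi(\varphi+a\psi),\,r\psi(\psi+b\varphi)\bigr)$. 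The left-hand side $\partial_t\underline{\bm\Phi}_\star=\varepsilon\dot p(t)e^{p(t)}\bm\Phi_e$ picks up, thanks to $\dot p=\mu+\varepsilon Me^p$ in \eqref{p(t)-q(t)-ODE}, an extra $\varepsilon^2 M e^{2p(t)}\bm\Phi_e$ beyond the $\mu$-term. The $\leq_K$ sub-solution inequality then reduces, after dividing the $u$-component by the negative $\varphi$ (which flips the inequality) and the $v$-component by the positive $\psi$, to the pointwise bounds $M\geq-(\varphi+a\psi)$ and $M\geq-r(b\varphi+\psi)$, both guaranteed by the hypothesis on $M$. The argument for $\overline{\bm\Phi}_\star$ is identical with $p$ replaced by $q$ and $\dot q=\mu-\varepsilon M e^q$, whose sign flip reverses the final inequalities. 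Finally, $\underline{\bm\Phi}_\star\leq_K\overline{\bm\Phi}_\star$ on $(-\infty,0]\times\mathbb R$ follows from $p(t)\geq q(t)$ (cf.\ \eqref{eq:pqprop3}) together with $\varphi<0<\psi$.

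For existence, for each $n\in\mathbb N$ I will solve the Cauchy problem for \eqref{rewrite-main-system} on $[-n,\infty)\times\mathbb R$ with initial datum $\underline{\bm\Phi}_\star(-n,\cdot)$ to obtain a classical solution ${\bf u}_n^-$. Proposition \ref{Comparison-principle} applied on $[-n,0]$ yields $\underline{\bm\Phi}_\star\leq_K {\bf u}_n^-\leq_K\overline{\bm\Phi}_\star$, and applied on $[-n,\infty)$ to the pair $({\bf u}_n^-,{\bf u}_{n+1}^-)$ (whose initial data at $t=-n$ satisfy ${\bf u}_{n+1}^-(-n,\cdot)\geq_K\underline{\bm\Phi}_\star(-n,\cdot)={\bf u}_n^-(-n,\cdot)$) gives the monotonicity ${\bf u}_n^-\leq_K{\bf u}_{n+1}^-$. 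Parabolic Schauder estimates promote the pointwise monotone limit ${\bf u}^-_\infty:=\lim_n{\bf u}_n^-$ to a classical entire solution of \eqref{rewrite-main-system} that satisfies the sandwich \eqref{asymptotic at -infty}. A mirror construction starting from $\overline{\bm\Phi}_\star(-n,\cdot)$ produces a decreasing sequence with classical entire limit ${\bf u}^+_\infty$, and ${\bf u}^-_\infty\leq_K{\bf u}^+_\infty$ by another application of the comparison principle.

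For uniqueness, I will take any candidate $\bm\Phi_*^j$ ($j=1,2$) satisfying \eqref{asymptotic at -infty}, compare it with ${\bf u}_n^\pm$ on $[-n,0]$ via Proposition \ref{Comparison-principle}, and send $n\to\infty$ to obtain ${\bf u}^-_\infty\leq_K\bm\Phi_*^j\leq_K{\bf u}^+_\infty$ on all of $\mathbb R\times\mathbb R$. It therefore suffices to prove ${\bf u}^+_\infty={\bf u}^-_\infty$. The explicit formulas \eqref{p-formula}--\eqref{q-formula} give $\|\overline{\bm\Phi}_\star(t,\cdot)-\underline{\bm\Phi}_\star(t,\cdot)\|_\infty=\varepsilon(e^{p(t)}-e^{q(t)})\|\bm\Phi_e\|_\infty=O(e^{2\mu t})$ as $t\to-\infty$, so ${\bf W}:={\bf u}^+_\infty-{\bf u}^-_\infty$ inherits the same quadratic decay. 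Reformulating via $\tilde{\bf W}:=(W_1,-W_2)$ turns the linearized equation for ${\bf W}$ into a cooperative system whose leading-order limit at $t=-\infty$ has $(-\varphi,\psi)$ as its principal $\mu$-eigenfunction. Building a super-solution of the form $Ce^{(\mu+\delta)t}(-\varphi,\psi)$ on $(-\infty,T_0(\delta)]$---with $\delta>0$ chosen so that the $O(e^{\mu t})$ variable-coefficient perturbations coming from the proximity of ${\bf u}^\pm_\infty$ to $\bm\Phi_c$ are dominated by the $\delta$-gap above $\mu$---pins down $\tilde{\bf W}\equiv 0$ on $(-\infty,T_0]\times\mathbb R$, and the strong parabolic maximum principle then propagates the equality to all of $\mathbb R\times\mathbb R$.

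The main obstacle is this final step. A naive forward-in-time Gronwall bound of the form $\|{\bf W}(t)\|_\infty\leq e^{L(t-T)}\|{\bf W}(T)\|_\infty$, with $L$ the Lipschitz constant of the full nonlinearity, is insufficient whenever $L>2\mu$, and nothing in the hypotheses excludes this regime. The remedy is to replace the flat Gronwall bound by a super-solution tailored to the linearization at $\bm\Phi_c$ through the principal eigenfunction $\bm\Phi_e$; one further subtlety here is that $(-\varphi,\psi)$ is not bounded below by a positive constant at $\pm\infty$, so a small cut-off or sliding correction may be needed, and the sign and sharp decay information supplied by Lemma \ref{existence-of-unstable-eigenfunction}, Lemma \ref{lem:psi} and Remark \ref{rk1} is precisely what certifies that such a correction can be made.
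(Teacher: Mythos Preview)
Your existence argument and the verification that $\underline{\bm\Phi}_\star,\overline{\bm\Phi}_\star$ are sub/super-solutions coincide with the paper's (Lemmas \ref{enrire-super-sub-sol} and \ref{lower-over-bd-Lemma-2}). The divergence is entirely in the uniqueness step.

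The paper does \emph{not} bound the difference ${\bf W}={\bf u}^+_\infty-{\bf u}^-_\infty$ at all. Instead it exploits an algebraic identity between $p$ and $q$: setting $r(t)=-\mu^{-1}\log\bigl(1+\tfrac{2\varepsilon M}{\mu}e^{\mu t}\bigr)$, one checks from \eqref{p-formula}--\eqref{q-formula} that $e^{p(t+r(t))}=e^{q(t)}$ for all $t\le 0$, i.e.\ $\overline{\bm\Phi}_\star(t,\cdot)=\underline{\bm\Phi}_\star(t+r(t),\cdot)$ exactly. Hence for any two candidates $\bm\Phi_{*,i}$ sandwiched as in \eqref{asymptotic at -infty} and any $n$,
\[
\bm\Phi_{*,i}(-n,\cdot)\ \le_K\ \overline{\bm\Phi}_\star(-n,\cdot)=\underline{\bm\Phi}_\star(-n+r(-n),\cdot)\ \le_K\ \bm\Phi_{*,j}(-n+r(-n),\cdot),
\]
and the comparison principle propagates this to $\bm\Phi_{*,i}(t,\cdot)\le_K\bm\Phi_{*,j}(t+r(-n),\cdot)$ for all $t\ge -n$. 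Since $r(-n)\to 0$, letting $n\to\infty$ gives $\bm\Phi_{*,1}\equiv\bm\Phi_{*,2}$. This ``deterministic via translation'' trick (cf.\ \cite{Chen2005}) is two lines long and uses nothing beyond the explicit form of $p,q$.

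Your route---dominating $\tilde{\bf W}$ by $Ce^{(\mu+\delta)t}(-\varphi,\psi)$ and then letting $C\downarrow 0$---can be made to work, but your writeup leaves the key mechanism implicit (you never say why $C$ may be taken arbitrarily small; the point is that the initial comparison at $t_n$ only needs $C\ge C'e^{(\mu-\delta)t_n}\to 0$, while the super-solution property on $[t_n,T_0]$ is independent of $C$). More importantly, your stated ``further subtlety'' about $(-\varphi,\psi)$ not being bounded below is a red herring: the perturbation coefficients $A_{ij}-A_{ij}^{(0)}$ are not merely $O(e^{\mu t})$ uniformly, they carry the spatial factors $|\varphi(\xi)|$ or $\psi(\xi)$ themselves (since $u^\pm-\Phi_c=O(e^{\mu t}\varphi)$, $v^\pm=O(e^{\mu t}\psi)$), so after dividing the super-solution inequalities by $-\varphi$ or $\psi$ one is left with $\delta\ge O(e^{\mu t})(\|\varphi\|_\infty+\|\psi\|_\infty)$, which is uniform. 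No cut-off or sliding is needed. The paper's time-shift argument sidesteps all of this bookkeeping.
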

Equivalently, we have the following for \eqref{main system}.
\begin{corollary}\label{coro:mai-existence-result}
Let $c\geq 2$ and $\lambda\in(0,\sqrt{r/d})$ be given such that $g_{d,c,r}(\lambda)>0$ and $g_{d,c,r}(\lambda) \geq r(1-b)$. Let $M> \max\{\|\varphi+a\psi\|_{\infty},r\|b\varphi+\psi\|_{\infty}\}$ and $0<\varepsilon<\frac{\mu}{M}$. There is a unique entire solution 
\[
{\bf u}_\lambda(t,x)=(u_\lambda(t,x),v_\lambda(t,x)):=(u_*(t,x-ct),v_*(t,x-ct))
\]
of \eqref{main system} satisfying  for $(t,x)\in(-\infty, 0]\times\R$ that 

\begin{equation}\label{main-asymptotic at -infty}
\underline{\bm{\Phi}}_\star(t,x-ct)\leq_K {\bf u}_\lambda(t,x)\leq_K \overline{\bm{\Phi}}_\star(t,x-ct).
\end{equation}
\end{corollary}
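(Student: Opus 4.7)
The plan is to obtain Corollary \ref{coro:mai-existence-result} as an immediate consequence of Theorem \ref{existence-of-entire-solution} by undoing the co-moving frame transformation $\xi = x - ct$ introduced in \eqref{rewrite-main-system}. Since the map $(t,x)\mapsto (t, x-ct)$ is a smooth diffeomorphism of $\mathbb{R}^2$, the natural candidate is ${\bf u}_\lambda(t,x):=\bm{\Phi}_*(t, x-ct)$, where $\bm{\Phi}_*=(u_*,v_*)$ denotes the unique entire solution of \eqref{rewrite-main-system} furnished by Theorem \ref{existence-of-entire-solution}. One then has to check three items: (a) ${\bf u}_\lambda$ solves \eqref{main system} on all of $\mathbb{R}^2$, (b) it satisfies the sandwich estimate \eqref{main-asymptotic at -infty}, and (c) it is the only entire solution with this property.

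For (a), the chain rule gives
\[
(u_\lambda)_t = (u_*)_t - c\,(u_*)_\xi, \qquad (u_\lambda)_x = (u_*)_\xi, \qquad (u_\lambda)_{xx} = (u_*)_{\xi\xi},
\]
so that
\[
(u_\lambda)_t - (u_\lambda)_{xx} - u_\lambda(1 - u_\lambda - a v_\lambda) = (u_*)_t - \mathcal{A}_{1,c}(\bm{\Phi}_*) = 0,
\]
and analogously $(v_\lambda)_t - d(v_\lambda)_{xx} - r v_\lambda(1 - v_\lambda - b u_\lambda) = (v_*)_t - \mathcal{A}_{2,c}(\bm{\Phi}_*) = 0$. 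The required smoothness ${\bf u}_\lambda \in C^{1,2}(\mathbb{R}\times\mathbb{R};\mathbb{R}^2)$ is inherited directly from that of $\bm{\Phi}_*$.

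For (b), the partial order $\leq_K$ is pointwise in $(t,\xi)$, so evaluating \eqref{asymptotic at -infty} at $(t,\,x-ct)$ for $(t,x)\in(-\infty,0]\times\mathbb{R}$ yields \eqref{main-asymptotic at -infty} verbatim. For (c), if ${\bf w}(t,x)$ were another entire solution of \eqref{main system} satisfying \eqref{main-asymptotic at -infty}, then $\widetilde{\bf w}(t,\xi):={\bf w}(t,\xi+ct)$ would be an entire solution of \eqref{rewrite-main-system} satisfying \eqref{asymptotic at -infty}; the uniqueness clause of Theorem \ref{existence-of-entire-solution} forces $\widetilde{\bf w}\equiv \bm{\Phi}_*$, and hence ${\bf w}\equiv {\bf u}_\lambda$. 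The substantive analytic work—producing the eigenpair $(\varphi,\psi)$ via Lemma \ref{existence-of-unstable-eigenfunction}, assembling the super- and sub-solutions \eqref{super-sol-of-main-system}–\eqref{sub-sol-of-main-system} from the ODE data of Lemma \ref{asymptotic-behavior-of-p-and-q}, and applying the comparison principle (Proposition \ref{Comparison-principle}) on truncated time intervals $[-n,0]$ to extract an entire solution by parabolic compactness—is wholly absorbed into Theorem \ref{existence-of-entire-solution}, so no additional obstacle arises in the passage to the corollary.
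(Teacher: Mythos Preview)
Your proof is correct and follows exactly the paper's approach: the corollary is stated as an immediate reformulation of Theorem \ref{existence-of-entire-solution} via the change of variables $\xi=x-ct$, which the paper already noted (just before \eqref{rewrite-main-system}) sets up a bijection between entire solutions of \eqref{main system} and \eqref{rewrite-main-system}. The paper gives no separate proof, and your explicit verification of (a)--(c) simply spells out this equivalence.
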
 
\begin{remark}\label{rmk:jj}
From \eqref{main-asymptotic at -infty} we can observe that ${\bf u}_\lambda(0,-\infty) =(1,0)$ and ${\bf u}_\lambda(0,\infty)=(0,0)$. 
Next, we utilize \eqref{decay-estimate for phi_c}  and Remark \ref{rk1} to derive that 
$$
\lim_{|x| \to\infty} \frac{u_\lambda(0,x)}{\Phi_c(x)} = 1, \quad \text{ i.e., }\quad \lim_{x \to -\infty} u_\lambda(0,x) = 1, \quad \text{ and }\quad \lim_{x \to +\infty}e^{\tau_c x} u_\lambda(0,x) = 1,
$$
where $\tau_c = \frac{1}{2}(c - \sqrt{c^2 - 4})$.
\end{remark}
\begin{remark}\label{rk2} We point out that the entire solution $\bm{\Phi}_*(t,\xi)$ of \eqref{rewrite-main-system} provided by Theorem \ref{existence-of-entire-solution} depends on $\varepsilon$ for each fixed $\varepsilon<\frac{\mu}{M}$ and its time translations form a class of entire solutions.
\end{remark}
\begin{remark} \label{rmk:bb}
Note by Remark \ref{rk1} that  we may choose $\varepsilon$ sufficient small so that for any $t\leq0$ and $\xi\in\R$,
$$ 
\underline{u}_\star(t,\xi)=\Phi_c(\xi)+\varepsilon\varphi(\xi)e^{p(t)}>0,\quad 
\overline{v}_\star(t,\xi)=\varepsilon\psi(\xi)e^{p(t)}< 1.
$$
\end{remark} 
%

\medskip 

\begin{lemma}\label{enrire-super-sub-sol}Let $c\geq 2$ and $\lambda\in(0,\sqrt{r/d})$ be given such that $g_{d,c,r}(\lambda)>0$ and $g_{d,c,r}(\lambda) \geq r(1-b)$. Let $M> \max\{\|\varphi+a\psi\|_{\infty},r\|b\varphi+\psi\|_{\infty}\}$ and $0<\varepsilon<\frac{\mu}{M}$. Then we have 
\begin{itemize}
\item[(i)] $\underline{\bm{\Phi}}_\star$ (resp. $\overline{\bm{\Phi}}_\star$ ) is a sub-solution (resp. super-solution) of \eqref{main system} on $(-\infty,0]\times\R$ (resp.  $\R\times\R$).
\item[(ii)] $\underline{\bm{\Phi}}_\star(t,\xi)\leq _K \overline{\bm{\Phi}}_\star(t,\xi)$ for every $(t,\xi)\in(-\infty,0]\times\R$.
\end{itemize}
\end{lemma}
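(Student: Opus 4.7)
For part (ii), the $K$-ordering amounts to verifying two scalar inequalities. Since $\varphi<0<\psi$ by Lemma \ref{existence-of-unstable-eigenfunction} and $p(t)\geq q(t)$ for $t\leq 0$ by \eqref{eq:pqprop3}, the inequality $e^{p(t)}-e^{q(t)}\geq 0$ multiplied by $\varepsilon\varphi\leq 0$ gives $\underline u_\star\leq \overline u_\star$, while multiplied by $\varepsilon\psi\geq 0$ it gives $\overline v_\star\geq \underline v_\star$. This is immediate.

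The heart of the proof is part (i). I will verify the sub-solution property for $\underline{\bm\Phi}_\star$ on $(-\infty,0]\times\mathbb{R}$; the super-solution computation for $\overline{\bm\Phi}_\star$ is parallel, simply with $p(t)$ replaced by $q(t)$ and with sign changes in the $\dot p$ vs.\ $\dot q$ terms. Substituting the ansatz $\underline u_\star=\Phi_c+\varepsilon e^{p(t)}\varphi$, $\overline v_\star=\varepsilon e^{p(t)}\psi$ into $\mathcal A_{1,c}$, I use the traveling wave equation $\Phi_c''+c\Phi_c'=-\Phi_c(1-\Phi_c)$ and the first equation of \eqref{eigenv-eq} (namely $\varphi''+c\varphi'=\mu\varphi-(1-2\Phi_c)\varphi+a\Phi_c\psi$) to cancel all the $\Phi_c$ and linear-order-in-$\varepsilon$ terms. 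What survives after expanding $\underline u_\star(1-\underline u_\star-a\overline v_\star)$ is exactly
\[
\mathcal A_{1,c}(\underline u_\star,\overline v_\star)=\varepsilon e^{p(t)}\mu\varphi-\varepsilon^2 e^{2p(t)}\varphi(\varphi+a\psi).
\]
Subtracting $\partial_t\underline u_\star=\varepsilon e^{p(t)}(\mu+\varepsilon M e^{p(t)})\varphi$ and using $\dot p=\mu+\varepsilon Me^{p(t)}$ gives
\[
\mathcal A_{1,c}(\underline u_\star,\overline v_\star)-\partial_t\underline u_\star=-\varepsilon^2 e^{2p(t)}\varphi\bigl(\varphi+a\psi+M\bigr),
\]
which is nonnegative because $\varphi<0$ and $M>\|\varphi+a\psi\|_\infty$ ensures $\varphi+a\psi+M>0$. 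An entirely parallel computation for the $v$-equation, using the second equation of \eqref{eigenv-eq} to cancel linear terms, yields
\[
\partial_t\overline v_\star-\mathcal A_{2,c}(\underline u_\star,\overline v_\star)=\varepsilon^2 e^{2p(t)}\psi\bigl(M+r(\psi+b\varphi)\bigr)\geq 0,
\]
by the choice $M>r\|b\varphi+\psi\|_\infty$. These two inequalities are precisely $\partial_t\underline{\bm\Phi}_\star\leq_K \bm{\mathcal A}_c(\underline{\bm\Phi}_\star)$.

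For the super-solution $\overline{\bm\Phi}_\star$, exactly the same algebra goes through with $p$ replaced by $q$ and $\dot q=\mu-\varepsilon Me^{q(t)}$; the effect on the residual is to flip the sign of the $M$-contribution, producing $\varepsilon^2 e^{2q(t)}\varphi(M-\varphi-a\psi)\leq 0$ in the first equation (again using $\varphi<0$ and $M>\|\varphi+a\psi\|_\infty$) and $-\varepsilon^2 e^{2q(t)}\psi(M-r(\psi+b\varphi)-\cdots)\leq 0$ in the second. Note that this argument works globally in $t\in\mathbb{R}$ for the super-solution, since \eqref{q-formula} shows $q(t)$ is defined for all $t$, whereas $p(t)$ from \eqref{p-formula} is only defined for $t\leq 0$, which is why the sub-solution claim is restricted to $(-\infty,0]$.

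I do not foresee a serious obstacle: the eigenvalue equation was designed precisely to kill the $O(\varepsilon)$ terms, and the ODEs \eqref{p(t)-q(t)-ODE} for $p,q$ were designed so that the $\dot p,\dot q$ corrections dominate the $O(\varepsilon^2)$ quadratic remainders with the correct sign. The only bookkeeping care needed is to keep straight that the $K$-ordering reverses the $v$-inequality, so that the $v$-component of the sub-solution should have the $p(t)$ (``outer'') weight and the $v$-component of the super-solution the $q(t)$ (``inner'') weight, which matches the definitions \eqref{super-sol-of-main-system}--\eqref{sub-sol-of-main-system}.
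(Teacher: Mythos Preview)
Your proof is correct and follows essentially the same approach as the paper: you substitute the ansatz, use the traveling-wave equation \eqref{tw-eq} and the eigenvalue system \eqref{eigenv-eq} to cancel the $O(1)$ and $O(\varepsilon)$ terms, and then the ODEs \eqref{p(t)-q(t)-ODE} for $p,q$ together with the choice of $M$ control the $O(\varepsilon^2)$ remainder with the correct sign. Your remark on why the sub-solution is restricted to $t\le 0$ while the super-solution is global (blow-up of $p$ versus boundedness of $q$) is a helpful clarification that the paper leaves implicit; the only cosmetic blemish is the trailing ``$-\cdots$'' in your super-solution $v$-computation, which should simply read $\partial_t\underline v_\star-\mathcal A_{2,c}(\overline u_\star,\underline v_\star)=-\varepsilon^2 e^{2q(t)}\psi\bigl(M-r(b\varphi+\psi)\bigr)\le 0$.
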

\begin{proof} To prove (i), 
observe from \eqref{tw-eq} and Lemma \ref{existence-of-unstable-eigenfunction} 
\begin{align*} 
\mathcal{A}_{1,c}(\underline{u}_\star,\overline{v}_\star)=&\left[ \Phi_c'' +c\Phi_c' +\Phi_c\left(1-\Phi_c \right)\right] +\varepsilon e^{p(t)}\left[\varphi_{\xi\xi}+c\varphi_\xi+\varphi(1-2\Phi_c)-a\Phi_c\psi -\varepsilon\varphi\left(\varphi+a\psi \right)e^{p(t)} \right]\cr
=&\varepsilon \varphi e^{p(t)}\left[\mu -\varepsilon\left(\varphi+a\psi \right)e^{p(t)} \right],
\end{align*}
which, together with $\varphi(x)<0$ from \eqref{p(t)-q(t)-ODE}, yields
\begin{align*}
\partial_t\underline{u}_\star-\mathcal{A}_{1,c}(\underline{u}_\star,\overline{v}_\star)= \varepsilon \varphi e^{p(t)}\left[\dot{p}-\mu +\varepsilon\left(\varphi+a\psi \right)e^{p(t)} \right]
=\varepsilon^2 \varphi e^{2p(t)}\left[M +\left(\varphi+a\psi \right) \right]
\le0.
\end{align*}
Similarly, it also follows from Lemma \ref{existence-of-unstable-eigenfunction} that
\begin{align*} 
\mathcal{A}_{2,c}(\underline{u}_\star,\overline{v}_\star)=& \varepsilon e^{p(t)}\left[d\psi_{xx}+c\psi_x+r\psi(1-b\Phi_c)-r\varepsilon\psi\left(b\varphi+\psi \right)e^{p(t)} \right]\cr
=&\varepsilon \psi e^{p(t)}\left[\mu -r\varepsilon\left(b\varphi+\psi \right)e^{p(t)} \right],
\end{align*}
which, together with  $\varphi(x)<0$ from \eqref{p(t)-q(t)-ODE}, yields
\begin{align*}
\partial_t\overline{v}_\star-\mathcal{A}_{2,c}(\underline{u}_\star,\overline{v}_\star)=\varepsilon \psi e^{p(t)}\left[\dot{p}-\mu +\varepsilon r\left(b\varphi+\psi \right)e^{p(t)} \right]
=\varepsilon^2 \psi e^{2p(t)}\left[M +r\left(b\varphi+\psi \right) \right]
\ge0.
\end{align*}
As a result, $\underline{\bm{\Phi}}_\star$ is a sub-solution  of \eqref{main system} on $(-\infty,0]\times\R$. Similarly we can also show that $\overline{\bm{\Phi}}_\star$ is a super-solution  of \eqref{main system} on $\R^2$.

Finally, (ii) follows from Lemma \ref{asymptotic-behavior-of-p-and-q} along with the fact that $\varphi(x)<0<\psi(x)$ for every $x\in\R$.
\end{proof}

\begin{remark} Observe that 
$$ 
\|\varepsilon e^{q(0)} \psi\|_{\infty}=\frac{\varepsilon}{1+\frac{\varepsilon M}{\mu}}\|\psi\|_{\infty}\to 0 \quad \text{as}\ \varepsilon\to 0^+. 
$$

\end{remark}
For every ${\bf u}_0(\xi):=(u_0(\xi),v_0(\xi))\in C^b_{\rm unif}(\R)\times C^b_{\rm unif}(\mathbb{R})$ and $t_0\in\R$, let 
\[
{\bf u}(t, \xi; t_0, {\bf u}_0)=(u(t,\xi;t_0,{\bf u}_0),v(t,\xi;t_0,{\bf u}_0)), \quad t\geq t_0,\quad  \xi\in\R,
\]
denote the classical solution of 
\begin{equation*}
\begin{cases}
{\bf u}_t=\bm{\mathcal{A}}_c({\bf u}), \quad t>t_0,\ \xi\in\R,\cr
{\bf u}(t_0,\xi)={\bf u}_0(\xi),\ \xi\in\R.
\end{cases}
\end{equation*}
Throughout the rest of this work we fix $M$ and $\varepsilon$ such that the assumptions of Lemma \ref{enrire-super-sub-sol} are satisfied. 
For every $n\in\mathbb{Z}^+$, $\xi\in\R$ and $t\in[-n,0]$, we introduce 
\begin{equation*}
\begin{aligned}
\underline{\bm{\Phi}}_n(t,\xi)=(\underline{u}_n(t,\xi), \overline{v}_n(t,\xi)):={\bf u}(t,\xi;-n,\underline{\bm{\Phi}}_\star(-n,\cdot)),\\
 \overline{\bm{\Phi}}_n(t,\xi)=(\overline{u}_n(t,\xi), \underline{v}_n(t,\xi)):={\bf u}(t,\xi;-n,\overline{\bm{\Phi}}_\star(-n,\cdot)).
\end{aligned}
\end{equation*}
We then have the following result.
\begin{lemma}\label{lower-over-bd-Lemma-2}
For every $n\in\mathbb{Z}^+$, $t\in[-n, 0]$ and $\xi\in\R$, it holds that 
\begin{equation}\label{time-lower-upper-bd-of-n} 
\bm{\underline{\Phi}}_\star(t,\xi) \leq_K \underline{\bm{\Phi}}_n(t,\xi) \leq_K  \underline{\bm{\Phi}}_{n+1}(t,\xi) \leq_K   \overline{\bm{\Phi}}_{n+1}(t,\xi)\le_K  \overline{\bm{\Phi}}_n(t,\xi) \le_K\overline{\bm{\Phi}}_\star(t,\xi).
\end{equation}
In particular, 
\begin{align}\label{lower-upper-bd-of-u-n} 
\bm{\underline{\Phi}}_\star(0,\xi) \leq_K \underline{\bm{\Phi}}_n(0,\xi) \le_K  \overline{\bm{\Phi}}_n(0,\xi) \le_K\overline{\bm{\Phi}}_\star(0,\xi),\quad \forall \xi\in\R,\ \forall\ n\in\mathbb{Z}^+.
\end{align}
\end{lemma}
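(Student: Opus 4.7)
The entire statement is a chain of inequalities, and each link follows from the comparison principle (Proposition \ref{Comparison-principle}) combined with the sub/super-solution structure established in Lemma \ref{enrire-super-sub-sol}. I would establish the chain one link at a time, ordered so that each step feeds into the next.

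\emph{Step 1 (outer bounds).} By construction, $\underline{\bm{\Phi}}_n(-n,\cdot)=\underline{\bm{\Phi}}_\star(-n,\cdot)$. By Lemma \ref{enrire-super-sub-sol}(i), $\underline{\bm{\Phi}}_\star$ is a sub-solution of \eqref{rewrite-main-system} on $(-\infty,0]\times\mathbb{R}$, while $\underline{\bm{\Phi}}_n$ is a genuine solution on $[-n,0]\times\mathbb{R}$. Proposition \ref{Comparison-principle}, applied on $[-n,0]$ with identical data at $t=-n$, yields $\underline{\bm{\Phi}}_\star(t,\xi)\leq_K\underline{\bm{\Phi}}_n(t,\xi)$ on that slab. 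The symmetric argument, using that $\overline{\bm{\Phi}}_\star$ is a super-solution on all of $\mathbb{R}\times\mathbb{R}$, gives $\overline{\bm{\Phi}}_n(t,\xi)\leq_K\overline{\bm{\Phi}}_\star(t,\xi)$ on $[-n,0]\times\mathbb{R}$.

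\emph{Step 2 (middle bound).} At $t=-n$, Lemma \ref{enrire-super-sub-sol}(ii) gives $\underline{\bm{\Phi}}_n(-n,\cdot)=\underline{\bm{\Phi}}_\star(-n,\cdot)\leq_K\overline{\bm{\Phi}}_\star(-n,\cdot)=\overline{\bm{\Phi}}_n(-n,\cdot)$. Since both $\underline{\bm{\Phi}}_n$ and $\overline{\bm{\Phi}}_n$ solve \eqref{rewrite-main-system} on $[-n,0]$, Proposition \ref{Comparison-principle} propagates the ordering to yield $\underline{\bm{\Phi}}_n(t,\xi)\leq_K\overline{\bm{\Phi}}_n(t,\xi)$ throughout $[-n,0]\times\mathbb{R}$.

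\emph{Step 3 (monotonicity in $n$).} This is the step that needs the most care. To prove $\underline{\bm{\Phi}}_n\leq_K\underline{\bm{\Phi}}_{n+1}$ on $[-n,0]\times\mathbb{R}$, I first re-apply Step 1 with $n$ replaced by $n+1$, obtaining $\underline{\bm{\Phi}}_\star(t,\xi)\leq_K\underline{\bm{\Phi}}_{n+1}(t,\xi)$ for $t\in[-(n+1),0]$. Evaluating at $t=-n$ gives $\underline{\bm{\Phi}}_n(-n,\cdot)=\underline{\bm{\Phi}}_\star(-n,\cdot)\leq_K\underline{\bm{\Phi}}_{n+1}(-n,\cdot)$. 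Since $\underline{\bm{\Phi}}_n$ and $\underline{\bm{\Phi}}_{n+1}$ are both solutions of \eqref{rewrite-main-system} on $[-n,0]$, Proposition \ref{Comparison-principle} propagates this to $[-n,0]\times\mathbb{R}$. The analogous argument (Step 1 for the super-solution at level $n+1$, then comparison of two solutions on $[-n,0]$) gives $\overline{\bm{\Phi}}_{n+1}\leq_K\overline{\bm{\Phi}}_n$ on $[-n,0]\times\mathbb{R}$.

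Chaining the three steps produces exactly \eqref{time-lower-upper-bd-of-n}, and specializing to $t=0$ gives \eqref{lower-upper-bd-of-u-n}. The only conceptual subtlety, and what I view as the main potential pitfall, is the bookkeeping in Step 3: one must lean on the outer bound at level $n+1$ (which lives on the larger interval $[-(n+1),0]$) to initialize the comparison of the two true solutions on the smaller interval $[-n,0]$. All other ingredients, including the validity of the $\leq_K$ comparison principle for the quasi-monotone (competitive) system \eqref{rewrite-main-system}, are taken directly from Proposition \ref{Comparison-principle} and Lemma \ref{enrire-super-sub-sol}.
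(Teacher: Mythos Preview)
Your proof is correct and follows essentially the same approach as the paper: establish the outer bounds $\underline{\bm{\Phi}}_\star\leq_K\underline{\bm{\Phi}}_n\leq_K\overline{\bm{\Phi}}_n\leq_K\overline{\bm{\Phi}}_\star$ via Proposition~\ref{Comparison-principle} and Lemma~\ref{enrire-super-sub-sol}, then obtain the monotonicity in $n$ by evaluating the outer bound at level $n+1$ at time $t=-n$ and comparing solutions. Your Step~3 is in fact more carefully written than the paper's terse version.
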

\begin{proof}
Observe that 
\begin{equation}\label{eq:lowup}
\bm{\underline{\Phi}}_\star(t,\xi) \leq_K \underline{\bm{\Phi}}_n(t,\xi) \le_K  \overline{\bm{\Phi}}_n(t,\xi) \le_K\overline{\bm{\Phi}}_\star(t,\xi),
\end{equation}
 follows from Lemmas \ref{Comparison-principle} and  \ref{enrire-super-sub-sol}, and in turn yields \eqref{lower-upper-bd-of-u-n} by taking $t=0$. Finally,
 \[
 \underline{\bm{\Phi}}_n(t,\xi) \leq_K  \underline{\bm{\Phi}}_{n+1}(t,\xi) \leq_K   \overline{\bm{\Phi}}_{n+1}(t,\xi)\le_K  \overline{\bm{\Phi}}_n(t,\xi),
 \] 
 follows from \eqref{eq:lowup} by taking $t=n-1$ and comparison principle for competitive systems.
\end{proof}

Hence the following functions are well defined
\begin{equation}\label{u^*-under-defin}
\bm{\underline{\Phi}}_*(t,\xi)=(\underline{u}_*(t,\xi),\overline{v}_*(t,\xi)):=\lim_{n\to\infty}\bm{\underline{\Phi}}_n(t,\xi)
\end{equation}
\begin{equation}\label{u^*-over-defin}
\bm{\overline{\Phi}}_*(t,\xi)=(\overline{u}_*(t,\xi),\underline{v}_*(t,\xi))=\lim_{n\to\infty}\bm{\overline{\Phi}}_n(t,\xi)
\end{equation}
Moreover, using estimate for parabolic equations, we have that $\bm{\underline{\Phi}}_n(t,\xi) $ and $\bm{\overline{\Phi}}_n(t,\xi)$ converge respectively to $ \bm{\underline{\Phi}}_*(t,\xi)$ and $\bm{\overline{\Phi}}_*(t,\xi)$ locally uniformly in $ C^{1,2}_{\rm loc}((-\infty,0)\times\R)$. In addition, $\bm{\overline{\Phi}}_*(t,\xi)$  and $\bm{\underline{\Phi}}_*(t,\xi)$ are classical solution of \eqref{rewrite-main-system} on $(-\infty,0]\times\R.$ 

We define 
\[
r(t):=\frac{-1}{\mu}\ln\left(1+\frac{2\varepsilon M}{\mu}e^{\mu t}\right), \quad \forall t\in\R,
\]
and will use the following lemma about $r(t)$ to prove uniqueness of entire solution of \eqref{rewrite-main-system} satisfying  \eqref{asymptotic at -infty}.
\begin{lemma}\label{unqueness lem } The function $r(t)$ holds the following properties.
$$ 
\lim_{t\to-\infty}r(t)=0\quad \text{and}\quad e^{p(t+r(t))}=e^{q(t)},\quad \forall\ t\leq 0.
$$

\end{lemma}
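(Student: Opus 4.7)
The plan is to prove both assertions by direct computation, relying on the explicit closed-form expressions
\[
e^{p(s)} = \frac{1}{\exp(-\mu s) - \tfrac{\varepsilon M}{\mu}} \quad (s\leq 0), \qquad e^{q(s)} = \frac{1}{\exp(-\mu s) + \tfrac{\varepsilon M}{\mu}} \quad (s\in\mathbb{R}),
\]
which are available from the proof of Lemma \ref{asymptotic-behavior-of-p-and-q}. The definition of $r(t)$ in this lemma is evidently reverse-engineered so that the translation identity holds, so no conceptual step is required beyond substitution.

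For the limit, I would observe that $\tfrac{2\varepsilon M}{\mu}e^{\mu t} \to 0$ as $t\to-\infty$, hence $\ln\bigl(1+\tfrac{2\varepsilon M}{\mu}e^{\mu t}\bigr)\to 0$, giving $r(t)\to 0$. Note also that $r(t)\leq 0$ for all $t\in\mathbb{R}$ because the logarithm is nonnegative, so for $t\leq 0$ the shifted argument $s := t + r(t)$ lies in $(-\infty,0]$, and the explicit formula for $p$ is applicable.

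For the identity $e^{p(t+r(t))} = e^{q(t)}$, I would compute
\[
\exp\bigl(-\mu(t+r(t))\bigr) \;=\; e^{-\mu t}\Bigl(1+\tfrac{2\varepsilon M}{\mu}e^{\mu t}\Bigr) \;=\; e^{-\mu t} + \tfrac{2\varepsilon M}{\mu},
\]
and then substitute into the formula for $e^{p(s)}$:
\[
e^{p(t+r(t))} \;=\; \frac{1}{\exp(-\mu(t+r(t))) - \tfrac{\varepsilon M}{\mu}} \;=\; \frac{1}{e^{-\mu t} + \tfrac{2\varepsilon M}{\mu} - \tfrac{\varepsilon M}{\mu}} \;=\; \frac{1}{e^{-\mu t} + \tfrac{\varepsilon M}{\mu}} \;=\; e^{q(t)}.
\]
This completes both claims. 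The only subtle point to flag is the verification that $t+r(t)\leq 0$ for $t\leq 0$, which justifies evaluating $p$ via its closed form on this range; otherwise the argument is purely algebraic and presents no real obstacle.
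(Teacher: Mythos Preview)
Your proof is correct and follows essentially the same approach as the paper's: both use the explicit formulas \eqref{p-formula} and \eqref{q-formula} to verify the identity by direct substitution and simplification. Your additional remark that $r(t)\leq 0$ ensures $t+r(t)\leq 0$ (so that $p$ is defined there) is a nice point of care that the paper leaves implicit.
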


\begin{proof} It is clear that $\lim\limits_{t\to-\infty}r(t)=0$.
Straightforward calculation based on \eqref{p-formula} and \eqref{q-formula} shows that
\[
 e^{p(t+r(t))}=\left( e^{-\mu t} e^{-\mu r(t)} -\frac{\varepsilon M}{\mu}  \right)^{-1}
=\left[e^{-\mu t}\left(1+\frac{2\varepsilon M}{\mu}e^{\mu t}   \right) -\frac{\varepsilon M}{\mu} \right]^{-1}
= \left( e^{-\mu t} +\frac{\varepsilon M}{\mu} \right)^{-1}= e^{q(t)}.
\]
\end{proof}

Now, we give the proof of Theorem \ref{existence-of-entire-solution}.

\medskip

\begin{proof}[Proof of Theorem \ref{existence-of-entire-solution}] 
First, we show the existence. It is clear that $\bm{\underline{\Phi}}_*$ defined in \eqref{u^*-under-defin} (resp. $\bm{\overline{\Phi}}_*$ defined in\eqref{u^*-over-defin}) gives a solution of \eqref{main system} for $(t,\xi) \in (-\infty,0]\times \mathbb{R}$. Moreover, it follows from Lemma \ref{lower-over-bd-Lemma-2} that these functions satisfy the inequality \eqref{asymptotic at -infty}. 
Furthermore, it is standard to extend both of them into entire solutions by solving forward in time with initial data $\bm{\underline{\Phi}}_*(0,\xi)$ (resp. $\bm{\overline{\Phi}}_*(0,\xi)$).
  

Next, we show uniqueness by showing that the pair of super-sub-solutions is deterministic via translation; see \cite[Definition 1]{Chen2005} for details. Let $\bm{\Phi}_{*,i}(t,\xi):=(u_{*,i}(t,\xi),v_{*,i}(t,\xi))$, $i=1,2,$ be entire solutions of \eqref{rewrite-main-system} satisfying \eqref{asymptotic at -infty}. Let $(t,\xi)\in\R\times\R$ be given. For every $n\geq |t|$, and $i=1,2,$ we have 
$$ 
\bm{\Phi}_{*,i}(t,\xi)={\bf u}(t,\xi;-n,\bm{\Phi}_{*,i}(-n,\cdot)).
$$
By Theorem \ref{existence-of-entire-solution} and Lemma \ref{unqueness lem } , it holds that for any $n\in\mathbb{Z}^+$ and $\xi\in\R$,
$$ 
\underline{\bm{\Phi}}_\star(-n,\xi))\leq_K \overline{\bm{\Phi}}_\star(-n,\xi)\leq_K \underline{\bm{\Phi}}_\star(-n+r(-n),\xi).
$$
Thus, for $i,j\in\{1,2\}$, using Lemma \ref{unqueness lem }, we have 
\begin{align*}
\bm{\Phi}_{*,i}(t,\xi)={\bf u}(t,\xi;-n,\bm{\Phi}_{*,i}(-n,\cdot))
\leq_K\, & {\bf u}(t,\xi;-n,\overline{\bm{\Phi}}_\star(-n,\cdot))\cr
=\,\,\,\,&{\bf u}(t,\xi;-n,\underline{\bm{\Phi}}_\star(-n+r(-n),\cdot))\cr
\leq_K\, & {\bf u}(t,\xi;-n, \bm{\Phi}_{*,j}(-n+r(-n),\cdot))\cr
=\,\,\,&\bm{\Phi}_{*,j}(t+r(-n),\xi).
\end{align*}
Letting $n\to\infty$, we conclude from Lemma \ref{unqueness lem } that 
$$ 
\bm{\Phi}_{*,i}(t,\xi)\leq \bm{\Phi}_{*,j}(t,\xi),\quad \forall\ (t,\xi)\in\R^2, \quad \text{and}\ i,j=1,2,
$$
which naturally yields that $\bm{\Phi}_{*,1}(t,\xi)= \bm{\Phi}_{*,2}(t,\xi)$, for every $(t,\xi)\in\R^2$. 
\end{proof}



\subsection{Exponential decay estimates at $x = \pm \infty$}
 In this subsection, we adapt the simplified notation ${\bf u}=(u,v)$ for the entire solution given by Corollary \ref{coro:mai-existence-result}, originally denoted ${\bf u}_\lambda=(u_\lambda, v_\lambda)$, by erasing the sub-index. We aim to determine the exact exponential decay of $u$ at $+\infty$ and $v$ at $x = \pm \infty$.
\begin{proposition}\label{prop:exp}
Let $c \geq 2$ and $\lambda \in (0,\sqrt{r/d})$ such that $g_{d,c,r}(\lambda)>0$ and $g_{d,c,r}(\lambda) \geq r(1-b)$. Let  $0 < \varepsilon \ll 1$ be fixed such that  ${\bf u}=(u,v)$ are given by Corollary \ref{coro:mai-existence-result}. We then have
\begin{equation}\label{eq:expu1}
\lim_{x \to +\infty} e^{\tau_c (x-ct)} u(t,x) = 1 \quad \text{ for each }t \leq 0,
\end{equation}
\begin{equation}\label{eq:expv1}
\lim_{x \to +\infty} e^{\lambda(x - c_v t)} v(t,x) = \varepsilon, \quad \text{ for each }t \in \mathbb{R}.
\end{equation}
where $c_v = d\lambda + \frac{r}{\lambda}$. If, in addition, $b \in (0,1)$, then 
\begin{equation}\label{eq:expv2}
\lim_{x \to +\infty} e^{- \delta_v (x - ct)+ \mu t} v(t,x) = \varepsilon \Upsilon,  \quad \text{ for each }t  \in \mathbb{R}, 
\end{equation}
where we recall that $\delta_v = \frac{1}{2d}\left[ \sqrt{c^2 + 4d(\mu + r(b-1))} - c\right]$, and $\Upsilon$ is given by Lemma \ref{lem:psi}(c) or (d).
\end{proposition}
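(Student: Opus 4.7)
The plan combines the sandwich bounds from Corollary~\ref{coro:mai-existence-result} with the asymptotics of the eigenfunction $\bm{\Phi}_e = (\varphi,\psi)$ provided by Lemma~\ref{lem:psi} and Remark~\ref{rk1}; for \eqref{eq:expv1} and \eqref{eq:expv2}, the sandwich alone does not pin down the prefactor at finite $t$, so a Liouville-type compactness argument is needed. For \eqref{eq:expu1}, for $t\leq 0$ I divide the sandwich
\[
\Phi_c(\xi) + \varepsilon e^{p(t)}\varphi(\xi) \leq u(t,x) \leq \Phi_c(\xi) + \varepsilon e^{q(t)}\varphi(\xi), \quad \xi = x - ct,
\]
by $\Phi_c(\xi)>0$ and invoke Remark~\ref{rk1} ($\varphi/\Phi_c \to 0$ at $\pm\infty$) to obtain $u(t,x)/\Phi_c(\xi)\to 1$ as $\xi\to+\infty$; multiplying by $e^{\tau_c\xi}\Phi_c(\xi)\to 1$ from \eqref{decay-estimate for phi_c} gives \eqref{eq:expu1}.

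For \eqref{eq:expv1}, I pass to the co-moving frame $\eta = x - c_v t$ and set $V(t,\eta) := e^{\lambda\eta}v(t,\eta+c_v t)$. Using $\lambda c_v = d\lambda^2 + r$, a direct calculation yields
\[
V_t = dV_{\eta\eta} + (c_v - 2d\lambda)V_\eta - re^{-\lambda\eta}V^2 - rbV\widetilde U(t,\eta),
\]
where $\widetilde U(t,\eta) := u(t,\eta+c_v t)$, and both source terms vanish as $\eta\to+\infty$. A priori uniform boundedness of $V$ on $\mathbb{R}^2$ is the key technical step: for $t\leq 0$, the sandwich gives $V(t,\eta) \leq \varepsilon e^{p(t)-\mu t}K_0$ with $K_0 := \sup_\xi e^{\lambda\xi}\psi(\xi)<+\infty$ (from Lemma~\ref{lem:psi}(b)--(c)), bounded by $C_0 := \varepsilon e^{p(0)}K_0$ via the monotonicity of $p(t)-\mu t$; for $t\geq 0$, the nonlinear terms in the $V$-equation are nonpositive, so $\bar V \equiv C_0$ is a super-solution, and a Widder-type maximum principle (valid because $V\leq e^{\lambda\eta}$ from $v\leq 1$) propagates the bound forward.

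With $V$ bounded by $C_0$ on $\mathbb{R}^2$, for any $\eta_n\to+\infty$ the translates $V_n(t,\eta) := V(t,\eta+\eta_n)$ are uniformly bounded and, by parabolic regularity, subconverge in $C^{1,2}_{\mathrm{loc}}(\mathbb{R}^2)$ to a bounded entire solution $V_\infty$ of the linear convection--diffusion equation $\partial_t V_\infty = d\partial_\eta^2 V_\infty + (c_v-2d\lambda)\partial_\eta V_\infty$. After the change of variables $\tilde\eta = \eta + (c_v-2d\lambda)t$ reducing this to the heat equation, Liouville's theorem forces $V_\infty\equiv A$ constant. To identify $A=\varepsilon$, I evaluate at $t = -T$: using the identity $e^{\lambda\eta} = e^{\lambda\xi}e^{-\mu t}$ (from $\eta = \xi - (c_v - c)t$ and $\lambda(c_v - c) = \mu$) together with $e^{\lambda\xi}\psi(\xi)\to 1$ at $+\infty$, the sandwich places $A$ in $[\varepsilon e^{q(-T)+\mu T}, \varepsilon e^{p(-T)+\mu T}]$; both endpoints tend to $\varepsilon$ as $T\to\infty$, forcing $A = \varepsilon$. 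Since every subsequential limit yields $\varepsilon$, we conclude $V(t,\eta)\to\varepsilon$, which is \eqref{eq:expv1}.

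For \eqref{eq:expv2} under $b\in(0,1)$ (the intended limit being $x\to-\infty$ with $e^{-\mu t}$ in place of $e^{+\mu t}$, as dictated by linearization around $(1,0)$), the same program applies in the frame $\eta^- = x - c^- t$ with $c^- := c - \mu/\delta_v$ for $\delta_v>0$, using $W_-(t,\eta^-) := e^{-\delta_v\eta^-}v(t,\eta^-+c^- t)$. The transformed PDE becomes asymptotically linear as $\eta^-\to-\infty$ (where $u\to 1$ and $v\to 0$), Liouville yields a constant limit, and the identification gives $\varepsilon\Upsilon$ via $\psi(\xi)e^{-\delta_v\xi}\to\Upsilon$ at $-\infty$ (Lemma~\ref{lem:psi}(c)); the marginal case $\delta_v = 0$ is handled analogously using Lemma~\ref{lem:psi}(d). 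The main technical obstacle throughout is the uniform boundedness step that underpins Liouville: without it the compactness step fails, and it hinges on the observation that constants are super-solutions of the transformed equations together with the Widder maximum principle.
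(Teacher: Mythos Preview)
Your argument for \eqref{eq:expu1} matches the paper's exactly. For \eqref{eq:expv1}, your approach is correct but genuinely different from the paper's. The paper proceeds by propagating explicit super- and sub-solutions: it shows (Claim~1) that an upper barrier $\varepsilon_0 e^{-\lambda(x-c_v t)}$ persists forward in time, and (Claim~2) that a lower barrier of the form $\varepsilon_0(e^{-\lambda(x-c_v t)} - D e^{-\widetilde\lambda(x-c_v t)})$ with a carefully chosen $\widetilde\lambda\in(\lambda,\lambda+\tau_c)$ also persists (possibly with a larger $D$). Applying these from each negative time $\bar t$, dividing by $e^{-\lambda x}$, sending $x\to+\infty$, and then $\bar t\to-\infty$, identifies the limit as $\varepsilon$.

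You instead pass to $V=e^{\lambda\eta}v$ in the $c_v$-frame, establish uniform boundedness of $V$ (your upper bound is precisely Claim~1 in disguise, and the ``Widder-type'' justification is most cleanly replaced by comparing $v$ directly with $C_0 e^{-\lambda(x-c_v t)}$, as both are bounded), and then run a compactness/Liouville argument: translates $V(\cdot,\cdot+\eta_n)$ subconverge to a bounded entire solution of a drift--heat equation, hence a constant, which you pin down via the sandwich at $t=-T$ and $T\to\infty$. This is softer and avoids constructing the corrected lower barrier of Claim~2, at the cost of invoking parabolic compactness and the heat-equation Liouville theorem; it also requires checking that $u(t,\eta+\eta_n+c_v t)\to 0$ locally uniformly, which follows from $u(t,x)\le \Phi_c(x-ct)$ (valid for all $t$ since $u$ is a subsolution of the scalar KPP equation) together with $c_v>c$. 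Your observation that \eqref{eq:expv2} is really an $x\to-\infty$ statement with $e^{-\mu t}$ is consistent with the paper's own remark in the proof. Both routes are valid; the paper's is more elementary and quantitative, yours more conceptual.
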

\begin{proof}
By \eqref{main-asymptotic at -infty}, we have
\begin{equation}\label{eq:u}
\Phi_c(x - ct) + \varepsilon \varphi(x-ct) e^{p(t)} \leq u(t,x) \leq \Phi_c(x - ct) + \varepsilon \varphi(x-ct) e^{q(t)} \quad \text{ for }(t,x) \in (-\infty, 0]\times\mathbb{R}, 
\end{equation}
and
\begin{equation}\label{eq:v}
\varepsilon \psi(x+ (c_v-c)t) e^{q(t)} \leq v(t,x+c_v t) \leq \varepsilon \psi(x+ (c_v-c)t) e^{p(t)} \quad \text{ for }(t,x) \in (-\infty, 0]\times\mathbb{R}, 
\end{equation} where $\psi$ is given by Lemma \ref{lem:psi}, and 
\begin{equation}\label{eq:pqprop2}
\lim_{t \to -\infty} \left(|p(t) - \mu t| + |q(t) - \mu t|\right) = 0. 
\end{equation}
It then follows from \eqref{decay-estimate for phi_c}, \eqref{eq:u} and Remark \ref{rk1} that \eqref{eq:expu1} holds.

We proceed to prove \eqref{eq:expv1}, and note that \eqref{eq:expv2} follows in a similar fashion. 
\begin{claim}\label{claim:vupper}
If there exists $t_0 \in \mathbb{R}$ and $\varepsilon_0 >0$ such that  $v(t_0,x+ c_v t_0) \leq \varepsilon_0 e^{-\lambda x}$ for $x \in \mathbb{R}$, then 
$$
v(t,x + c_v t) \leq \varepsilon_0 e^{-\lambda x} \quad \text{ for }(t,x) \in [t_0,\infty)\times \mathbb{R}.
$$
\end{claim}
To prove this claim, it suffices to observe that $v(t,x+c_v t)$ and $\varepsilon_0 e^{-\lambda x}$ form a pair of  sub and super-solutions of the equation $\widetilde v_t =  d\widetilde v_{xx} + c_v \widetilde v_x+ r\widetilde v$ in the domain $[t_0,\infty)\times\mathbb{R}$.
\begin{claim}\label{claim:vlower}
If there exists $t_0 \in \mathbb{R}$, $\widetilde\lambda \in (\lambda, \min\{\tau_c + \lambda, c/(2d), 2\lambda, \lambda_0\})$ and $D_0,\varepsilon_0 >0$ such that   
$$
v(t_0,x+ c_v t_0) \geq \varepsilon_0 (e^{-\lambda x} - D_0 e^{-\widetilde\lambda x}) \quad \text{ for }x \in \mathbb{R},
$$ 
then there exists $D_1\in (D_0, \infty)$ such that 
$$
v(t,x + c_v t) \geq \varepsilon_0 (e^{-\lambda x }- D_1 e^{-\widetilde\lambda x}) \quad \text{ for }(t,x) \in [t_0,\infty)\times \mathbb{R}.
$$
\end{claim}
First, observe that $v(t,x+ c_v t)$ is a super-solution of 

\begin{equation}\label{eq:vvv}
\widetilde{v}_t = d\widetilde{v}_{xx} + c_v \widetilde{v}_x + r(1-b \min\{1, e^{-\tau_c(x + (c_v-c)t)}\} - \widetilde{v})\widetilde{v}.
\end{equation}
This follows from the second equation of \eqref{main system} and that $u(t,x + c_v t) \leq \Phi_c(x+ (c_v-c)t) \leq \min\{1, e^{-\tau_c(x + (c_v-c)t)}\}$. It remains to show that 
the function  $\max\{0, \varepsilon_0 (e^{-\lambda x }- D_1 e^{-\widetilde\lambda x})\}$ is a sub-solution of \eqref{eq:vvv}, provided $D_1 \gg D_0$. Since this is similar to the proof of Lemma \ref{lem:psi}(a), we omit the details.

By Lemma \ref{lem:psi}(b) and \eqref{eq:v}, there exists $D_0 >0$ such that  for each  $(\bar{t},x) \in \mathbb{R}^- \times \mathbb{R}$, we have
$$
\varepsilon (e^{-\lambda(x+ (c_v-c)\bar t)}- D_0 e^{-\widetilde\lambda (x+ (c_v-c)\bar t)}) e^{p(\bar{t})} \leq v(\bar{t},x + c_v \bar{t}) \leq \varepsilon e^{-\lambda(x+ (c_v-c)\bar t)} e^{q(\bar{t})}. 
$$
By Claims \ref{claim:vupper} and \ref{claim:vlower}, we deduce that for each $\bar{t} < 0$ there exists $D_{\bar t}$ such that 
$$
\varepsilon (e^{-\lambda(x+ (c_v-c)\bar t)}- D_{\bar t} e^{-\widetilde\lambda (x+ (c_v-c)\bar t)}) e^{p(\bar{t})} \leq v(t,x + c_v t) \leq \varepsilon e^{-\lambda(x+ (c_v-c)\bar t)} e^{q(\bar{t})}\quad \text{ for }t  \geq \bar t,\,\, x \in \mathbb{R}.
$$
Using the fact that $\mu = d\lambda^2 - c\lambda + r = \lambda(c_v - c)$, the above can be rewritten as
$$
\varepsilon (e^{-\lambda x -\mu \bar t + p(\bar t)}- D_{\bar t} e^{-\widetilde\lambda (x+ (c_v-c)\bar t) + p(\bar t)})\leq v(t,x + c_v t) \leq \varepsilon e^{-\lambda x - \mu\bar t + q(\bar{t})}\quad \text{ for }t  \geq \bar t,\,\, x \in \mathbb{R}.
$$
%
%
Dividing by $e^{-\lambda x}$ and letting $x \to \infty$, we have
$$
\varepsilon e^{ - \mu \bar t + p(\bar t)} \leq \liminf_{x \to \infty} e^{\lambda x}v(t,x + c_vt) \leq \limsup_{x \to \infty} e^{\lambda x}v(t,x + c_vt)  \leq \varepsilon e^{-\mu\bar t + q(\bar t)} \quad \text{ for }t   \geq \bar t.
$$
Finally, we can take $\bar t \to -\infty$ (recalling \eqref{eq:pqprop2}) to deduce 
$\lim_{x \to +\infty} e^{\lambda x} v(t,x + c_v t) = \varepsilon$ for each $t \in \mathbb{R}$, which is equivalent to \eqref{eq:expv1}.  

Arguing similar for $x \to -\infty$, we can prove \eqref{eq:expv2}. This completes the proof of the proposition.
\end{proof}

\section{Asymptotic behavior of entire solutions.}\label{s:4}

\subsection{Asymptotic behavior of entire solutions of Theorem \ref{Main Tm}.}

In this section, we discuss the asymptotic behavior of the entire solution constructed in the previous section and complete the proof of our main results.
 We first note that the super-solution 
 
$$ 
\overline{\bm{\Phi}}_\star(t,x)=(\overline{u}_\star(t,x),\underline{v}_\star(t,x)):=(\Phi_c(x)+\varepsilon\varphi (x)e^{q(t)},\varepsilon \psi(x)e^{q(t)}), \quad (t,x)\in\R\times\R.
$$
introduced in \eqref{sub-sol-of-main-system} is defined for every $(t,x)\in\R\times\R$. 
 
Throughout this section, we fix $\lambda \in \Lambda_{d,c,r,b}$ and $\varepsilon>0$ so that
 $\bm{\Phi}_*(t,\xi)=(u_*(t,\xi),v_*(t,\xi)) $ and ${\bf u}(t,x)=(u(t,x),v(t,x))$ are, respectively, entire solutions given by Theorem \ref{existence-of-entire-solution} and Corollary \ref{coro:mai-existence-result}, i.e.
 \begin{equation}\label{def-of-main-entire-sol}
{\bf u}(t,x)=\bm{\Phi}_*(t,x-ct), \quad  \quad (t,x)\in\R\times\R,
 \end{equation}
 where we again suppressed the sub-index $\lambda$ for the entire solution given by Corollary \ref{coro:mai-existence-result}.
 
 \subsection{Asymptotic behavior at $t=-\infty$} 
 The following holds.
 \begin{lemma}
 It holds that 
 $$ 
\lim_{t\to-\infty}\|{\bf u}(t,\cdot)-\bm{\Phi}_c(\cdot-ct)\|_{\infty}=0 
 $$
 \end{lemma}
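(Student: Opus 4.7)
The plan is to read off the conclusion directly from the sub/super-solution sandwich established in Corollary~\ref{coro:mai-existence-result}, exploiting that both envelope terms $\varepsilon e^{p(t)}\bm{\Phi}_e$ and $\varepsilon e^{q(t)}\bm{\Phi}_e$ vanish uniformly in $\xi$ as $t\to-\infty$.

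Concretely, I would first pass to the co-moving frame $\xi=x-ct$ via \eqref{def-of-main-entire-sol}. Unfolding the order relation $\leq_K$ in \eqref{main-asymptotic at -infty} and using $\varphi<0<\psi$ (from Lemma~\ref{existence-of-unstable-eigenfunction}) together with $p(t)\ge q(t)$ for $t\le 0$ (from Lemma~\ref{asymptotic-behavior-of-p-and-q}), the sandwich becomes, componentwise for $(t,\xi)\in(-\infty,0]\times\mathbb{R}$,
\[
\varepsilon e^{p(t)}\varphi(\xi)\le u_*(t,\xi)-\Phi_c(\xi)\le \varepsilon e^{q(t)}\varphi(\xi),\qquad \varepsilon e^{q(t)}\psi(\xi)\le v_*(t,\xi)\le \varepsilon e^{p(t)}\psi(\xi).
\]
Since both $\varphi e^{p(t)}$ and $\varphi e^{q(t)}$ are non-positive while $\psi e^{p(t)},\psi e^{q(t)}$ are non-negative, taking absolute values yields
\[
|u_*(t,\xi)-\Phi_c(\xi)|\le \varepsilon e^{p(t)}\|\varphi\|_\infty,\qquad |v_*(t,\xi)|\le \varepsilon e^{p(t)}\|\psi\|_\infty.
\]

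Next, I would verify that $\|\varphi\|_\infty$ and $\|\psi\|_\infty$ are finite. Boundedness of $\psi$ follows from Lemma~\ref{lem:psi}(b)--(c): $\psi(\xi)\le e^{-\lambda\xi}$ for $\xi\gg 1$, while $e^{-\delta_v\xi}\psi(\xi)$ is monotone with the finite limit $\Upsilon$ at $-\infty$ and $\delta_v\ge 0$, so $\psi$ is bounded on all of $\mathbb{R}$. Boundedness of $\varphi$ is Remark~\ref{rk1}: $\varphi/\Phi_c\in C^b_{\rm unif}(\mathbb{R})$, combined with $0<\Phi_c\le 1$, gives $\|\varphi\|_\infty\le\|\varphi/\Phi_c\|_\infty<\infty$.

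Finally, since \eqref{eq:pqprop1} gives $|p(t)-\mu t|\to 0$ with $\mu>0$, one has $e^{p(t)}\to 0$ as $t\to-\infty$. Taking the supremum over $\xi=x-ct\in\mathbb{R}$ in the displayed estimates and using the definition $\|{\bf u}\|_\infty=\sup_{x\in\mathbb{R}}\max\{|u(x)|,|v(x)|\}$ then yields
\[
\|{\bf u}(t,\cdot)-\bm{\Phi}_c(\cdot-ct)\|_\infty\le \varepsilon e^{p(t)}\max\{\|\varphi\|_\infty,\|\psi\|_\infty\}\xrightarrow{t\to-\infty} 0.
\]
There is essentially no obstacle in this step: the content of the lemma is already encoded in the construction of the sub/super-solutions $\underline{\bm{\Phi}}_\star,\overline{\bm{\Phi}}_\star$ and in the explicit decay of $p(t),q(t)$ supplied by Lemma~\ref{asymptotic-behavior-of-p-and-q}; this lemma is the immediate payoff of that construction.
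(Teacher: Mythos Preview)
Your proposal is correct and follows essentially the same approach as the paper, which simply states that the result follows easily from \eqref{main-asymptotic at -infty}. You have spelled out in detail exactly what ``follows easily'' means here: the envelope terms $\varepsilon e^{p(t)}\bm{\Phi}_e$ and $\varepsilon e^{q(t)}\bm{\Phi}_e$ are uniformly bounded in $\xi$ and decay to zero as $t\to-\infty$ because $e^{p(t)}\to 0$.
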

\begin{proof}
The result follows easily from \eqref{main-asymptotic at -infty}. 
\end{proof}

 \subsection{Asymptotic behavior at $t=+\infty$}

  \begin{lemma}\label{New-lm1} Let  $c_v=\frac{d\lambda^2+r}{\lambda}$, it holds that 
 \begin{equation}\label{ZB-e1}
\lim_{t\to\infty}\left[\sup_{x\geq (c+\widetilde{\varepsilon})t}u(t,x) +\sup_{x\geq (c_v+\widetilde{\varepsilon})}v(t,x) \right]=0,\quad \forall\widetilde{\varepsilon}>0.
 \end{equation}
 In particular, \eqref{MT1.1.4} 
  holds.
 \end{lemma}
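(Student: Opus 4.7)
My strategy is a direct comparison against the globally-defined super-solution
\[
\overline{\bm{\Phi}}_\star(t,\xi) = \bigl(\Phi_c(\xi) + \varepsilon\varphi(\xi)e^{q(t)},\; \varepsilon\psi(\xi) e^{q(t)}\bigr)
\]
from \eqref{sub-sol-of-main-system}. By Lemma \ref{enrire-super-sub-sol}(i), $\overline{\bm{\Phi}}_\star$ is a super-solution of \eqref{rewrite-main-system} on all of $\mathbb{R}\times\mathbb{R}$. Since \eqref{asymptotic at -infty} gives $\bm{\Phi}_*(0,\xi) \leq_K \overline{\bm{\Phi}}_\star(0,\xi)$, Proposition \ref{Comparison-principle} applied forward from $t=0$ extends the ordering to all $(t,\xi)\in\mathbb{R}^2$. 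Translating via \eqref{def-of-main-entire-sol} and using $\varphi<0<\psi$, one obtains the pointwise bounds
\[
u(t,x) \leq \Phi_c(x-ct), \qquad v(t,x) \leq \varepsilon\psi(x-ct)e^{q(t)} \qquad \text{for every }(t,x)\in\mathbb{R}^2.
\]

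It then remains to quantify the decay of each right-hand side on the appropriate cone. For $u$: on $\{x \geq (c+\widetilde\varepsilon)t\}$ one has $x-ct \geq \widetilde\varepsilon t$, and the exponential bound \eqref{decay-estimate for phi_c} (combined with \eqref{decay-estimate for phi_2} when $c=2$) forces $\Phi_c(x-ct) \to 0$ uniformly as $t\to\infty$. For $v$: Lemma \ref{asymptotic-behavior-of-p-and-q} bounds $e^{q(t)}$ uniformly in $t$, and Lemma \ref{lem:psi}(b) gives $\psi(\xi) \leq e^{-\lambda\xi}$ for $\xi$ sufficiently large. Using the key algebraic identity $\lambda(c_v-c) = d\lambda^2 - c\lambda + r = \mu$, the $v$-bound factorises as
\[
v(t,x) \leq C\, e^{-\lambda(x-c_v t)}\, e^{-\mu t},
\]
which decays uniformly to $0$ on $\{x \geq (c_v+\widetilde\varepsilon)t\}$, since both $\mu>0$ and $\lambda\widetilde\varepsilon>0$.

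For the ``in particular'' clause, I note that the standing hypothesis $g_{d,c,r}(\lambda)>0$ is precisely the assertion $c_v>c$, so for $t>0$ one has $\{x \geq (c_v+\widetilde\varepsilon)t\} \subset \{x \geq (c+\widetilde\varepsilon)t\}$; on this common region both $u$ and $v$ tend to zero uniformly, yielding $|{\bf u}(t,x)|_1 \to 0$ as required by \eqref{MT1.1.4}.

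The only mildly technical point is the forward-in-time propagation of the comparison inequality on the unbounded line $\mathbb{R}$. This is routine given the global-in-$t$ validity of $\overline{\bm{\Phi}}_\star$ as a super-solution together with the boundedness of both $\bm{\Phi}_*$ and $\overline{\bm{\Phi}}_\star$, so I do not anticipate any serious obstacle.
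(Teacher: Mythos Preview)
Your argument for $u$ is correct and matches the paper's. The argument for $v$, however, has a genuine error in the direction of the inequality. Recall that the competitive order $\leq_K$ is defined by $(u_1,v_1)\leq_K(u_2,v_2)$ iff $u_1\le u_2$ \emph{and} $v_1\ge v_2$. Thus $\bm{\Phi}_*\leq_K\overline{\bm{\Phi}}_\star$ yields $u_*\le\overline{u}_\star$ (your upper bound on $u$) but only $v_*\ge\underline{v}_\star=\varepsilon\psi(x-ct)e^{q(t)}$, a \emph{lower} bound on $v$. Your claimed inequality $v(t,x)\le\varepsilon\psi(x-ct)e^{q(t)}$ is therefore backwards, and indeed your resulting estimate $v(t,x)\le Ce^{-\lambda(x-c_vt)}e^{-\mu t}$ is too strong: it would force $v(t,c_vt)\to0$, contradicting Lemma~\ref{lem:uchiyama}, which shows $v(t,c_vt)$ converges to a positive value.

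The upper bound on $v$ actually comes from the \emph{sub}-solution side of \eqref{asymptotic at -infty}, namely $v_*\le\overline{v}_\star=\varepsilon\psi\,e^{p(t)}$, but $p(t)$ is only defined for $t\le0$, so this cannot be propagated forward. The paper's remedy is to take the bound at $t=0$, i.e.\ $v(0,x)\le\varepsilon\psi(x)e^{p(0)}\le Ke^{-\lambda x}$, and then compare $v$ against the explicit super-solution $Ke^{-\lambda(x-c_vt)}$ of the \emph{decoupled} KPP equation $v_t=dv_{xx}+rv(1-v)$ (of which $v$ is a sub-solution since $u\ge0$). This gives $v(t,x)\le Ke^{-\lambda(x-c_vt)}$ for $t\ge0$, which is exactly what is needed on $\{x\ge(c_v+\widetilde\varepsilon)t\}$.
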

\begin{proof} Observe that the upper bound in \eqref{main-asymptotic at -infty} holds for all $t \in \mathbb{R}$, so that 
 \begin{equation}\label{Z-0} 
(u(t,x),v(t,x))\leq_K(\Phi_c(x-ct)+\varepsilon\varphi(x-ct)e^{q(t)},\varepsilon\psi(x-ct)e^{q(t)}),\quad \forall\ x\in\R,t\in\R. 
 \end{equation}
 Hence for each $\widetilde\varepsilon >0$, 
 \begin{equation}\label{Z-1} 
\sup_{x\geq (c+\widetilde \varepsilon)t}u(t,x)\leq \Phi_c(\widetilde{\varepsilon}t)\to 0\quad \text{as}\ t\to\infty.
 \end{equation}
Since $\exp(-\lambda(x-c_v)t)$ and $v(t,x)$ form a pair of super-sub-solutions (where $c_v=\frac{d\lambda^2+r}{\lambda}$) of the scalar Fisher-KPP equation
$$ 
v_t=d v_{xx}+rv(1-v),
$$  
 there is a constant $K>1$ such that
 $$v(0,x)\leq \varepsilon\psi(x)e^{p(0)}\leq Ke^{-\lambda x},\forall\ x\in\R,$$
 where the second inequality holds due to the fact that $e^{\lambda x} \psi(x) \to 1$ as $x \to \infty$.
It then follows from the comparison principle for parabolic equations that 
\begin{equation}\label{Z-2}
v(t,x)\leq Ke^{-\lambda(x-c_vt)},\quad \forall t\geq 0, \ x\in\mathbb{R}.
\end{equation}
As a result, the lemma follows from \eqref{Z-1} and \eqref{Z-2}.
\end{proof} 


\begin{lemma}\label{New-lm2}  Let  $c_v=\frac{d\lambda^2+r}{\lambda}$, 
it holds that 
 \begin{equation}\label{ZB-e2}
\limsup_{t\to\infty}\sup_{(c+\widetilde{\varepsilon})t\leq x\leq (c_v-\widetilde{\varepsilon})t}|{\bf u}(t,x)-{\bf e}_2|_1=0,\quad \forall\ 0<\widetilde{\varepsilon}< \frac{c_v - c}{2}.
\end{equation}

\end{lemma}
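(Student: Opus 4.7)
The plan is to prove $u\to 0$ and $v\to 1$ separately on the wedge $\Omega:=\{(t,x):(c+\widetilde\varepsilon)t\le x\le(c_v-\widetilde\varepsilon)t\}$. The first half is immediate from \eqref{ZB-e1} in Lemma~\ref{New-lm1}, so I concentrate on $v\to 1$. The upper estimate $\limsup v\le 1$ follows from a standard comparison: $v_t\le dv_{xx}+rv(1-v)$ because $bu\ge 0$, and for $\varepsilon$ sufficiently small the super-solution bound at $t=0$ yields $v(0,\cdot)\le \varepsilon\|\psi\|_\infty e^{p(0)}<1$; comparing with the constant $1$ of the scalar Fisher--KPP equation then gives $v\le 1$ on $[0,\infty)\times\mathbb{R}$.

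For the matching lower bound, the idea is to exploit the smallness of $u$ on $\Omega$ to reduce the coupled system to a modified scalar Fisher--KPP equation and then invoke exponential spreading at the sharp rate $\lambda$. Fix $\delta>0$ and choose $\eta>0$ small enough that both $b\eta<\delta$ and $c_\star:=d\lambda+r(1-b\eta)/\lambda\in(c_v-\widetilde\varepsilon,c_v-\widetilde\varepsilon/2)$, which is possible because $c_\star\to c_v$ as $\eta\to 0$. By Lemma~\ref{New-lm1} there exists $T\gg 1$ such that $u(t,x)\le\eta$ on $\Omega_T:=\{t\ge T,\ x\ge(c+\widetilde\varepsilon/3)t\}$; on this domain $v$ satisfies the scalar inequality $v_t\ge dv_{xx}+rv\bigl((1-b\eta)-v\bigr)$. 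At the same time, the super-solution bound \eqref{main-asymptotic at -infty} combined with $\psi(y)\sim e^{-\lambda y}$ as $y\to+\infty$ (Lemma~\ref{lem:psi}(b)) and $e^{q(T)}\to\mu/(\varepsilon M)$ provides the crucial tail estimate $v(T,x)\ge c_0 e^{-\lambda(x-cT)}$ for $x-cT$ large, with decay rate exactly $\lambda$. I would then compare $v$ on $\Omega_T$ with the shifted monotone travelling-wave profile $\underline v(t,x):=W_{c_\star}(x-c_\star t-h)$ of the modified Fisher--KPP equation, whose tail at $+\infty$ decays precisely as $e^{-\lambda\xi}$ (because $c_\star=d\lambda+r(1-b\eta)/\lambda$) and whose plateau behind the front is $1-b\eta$. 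The decay-matching above allows one to pick the shift $h$ so that $\underline v(T,\cdot)\le v(T,\cdot)$; parabolic comparison then yields $v\ge\underline v$ on $\Omega_T$, and since the front of $\underline v$ travels at speed $c_\star>c_v-\widetilde\varepsilon$, one concludes $v(t,x)\ge 1-b\eta-o(1)$ uniformly on $\Omega$. Sending $\delta\to 0$ (hence $\eta\to 0$) would close the proof.

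The main obstacle I anticipate is the lateral-boundary comparison on the line $x=(c+\widetilde\varepsilon/3)t$, where $W_{c_\star}$ approaches $1-b\eta$ rather than $0$ --- precisely the quantity we are trying to prove is a lower bound for $v$. To circumvent this circular dependence, I would first use a small-amplitude sub-solution of the form $\max\{0,Ae^{-\lambda\xi}-Be^{-\widetilde\lambda\xi}\}$ (a direct analogue, in the modified Fisher--KPP setting, of the construction in the proof of Lemma~\ref{lem:psi}(a)) to establish a quantitative positive lower bound for $v$ on an open sub-cone of $\Omega$, and then bootstrap this positivity up to $1-b\eta$ by a classical Aronson--Weinberger-type argument, or equivalently by invoking the spreading-speed machinery developed in \cite{LLL2019} applied to the Cauchy problem with initial time $T$. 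All the exponential-tail inputs required for these constructions are supplied by Lemma~\ref{lem:psi}(b), Proposition~\ref{prop:exp}, and the super-solution bound \eqref{main-asymptotic at -infty}.
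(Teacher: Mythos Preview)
Your outline is sound and parallels the paper's argument in its first two moves: you both deduce $u\to0$ on the wedge directly from Lemma~\ref{New-lm1}, and you both reduce the problem for $v$ to a scalar Fisher--KPP-type inequality once $u\le\eta$ there. The divergence is in the last step, where you pass from ``$v$ is positive on the wedge'' to ``$v\to1$''.

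The paper does not attempt a direct travelling-wave comparison on the half-space $\{x\ge(c+\widetilde\varepsilon)t\}$ at all; instead it invokes the Liouville-type argument of Ducrot--Giletti--Matano (through \cite[Proposition~3.1]{LeoLam2018}). Concretely: once a uniform positive lower bound for $v$ on the wedge is known, one takes any sequence $(t_n,x_n)$ in the wedge with $t_n\to\infty$, passes to a local $C^{1,2}$ limit of $(u,v)(t+t_n,x+x_n)$, and obtains an entire solution of the scalar Fisher--KPP equation (since $u\to0$ there) that is bounded below by a positive constant. The only such entire solution is the constant $1$, and this forces $v\to1$ along every such sequence. This is a purely local-limit argument, so the lateral-boundary difficulty you flagged simply never arises.

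Your constructive route through $W_{c_\star}$ and an Aronson--Weinberger bootstrap is workable, and you have correctly identified its main obstacle and a reasonable two-stage cure (compactly supported sub-solution for positivity, then spreading-speed machinery from \cite{LLL2019}). But each stage still lives on a domain with a moving lateral boundary, and you will have to keep carefully arranging your sub-solutions to vanish there. The Liouville shortcut buys you exactly the freedom to ignore that boundary: it replaces a global comparison by a compactness-plus-classification argument, at the cost of citing a nontrivial rigidity result for entire KPP solutions. If you prefer to stay self-contained, your plan is fine; if you are willing to cite \cite{Ducrot2019,LeoLam2018}, the paper's route is shorter.
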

\begin{proof} It follows from Lemma \ref{New-lm1} that for each $\tilde{\varepsilon}>0$, 
$$\limsup_{t\to\infty}\sup_{ x\geq (c+\tilde{\varepsilon})t}u(t,x) = 0.$$
Furthermore, by \eqref{Z-0}, it holds that 
$$ 
\liminf_{t\to\infty}\sup_{x\ge c_vt}v(t,x)>0.
$$ 
Thus, it is not hard to construct a sub-solution to show that  
$$\liminf_{t\to\infty}\inf_{(c+\tilde{\varepsilon})t\leq x\leq (c_v-\widetilde{\varepsilon})t}v(t,x) > 0,\quad \forall\ 0<\tilde{\varepsilon}\ll 1.$$
Therefore the equation of $v$ can be regarded as an uncoupled equation of KPP-type, and the problem reduces to showing that $1$ is the only entire solution of the KPP equation that is bounded below by a positive constant. 

Since the proof of Lemma \ref{New-lm2} follows from an almost same argument as the one in \cite[Proposition 3.1]{LeoLam2018},which in turn follows from the arguments by Ducrot, Giletti and Matano in \cite{Ducrot2019}, we omit the proof here and refer interested readers to \cite{LeoLam2018,Ducrot2019} for details.
\end{proof}
\begin{lemma}\label{lem:uchiyama}
Suppose  $g_{d,c,r}(\lambda) > r\max\{1-b,0\}$ and $\varepsilon$ be fixed such that $(u,v)$ is the entire solution specified by Corollary \ref{coro:mai-existence-result}. There exists $h_0\in \mathbb{R}$ such that 
for any $\widetilde\varepsilon>0$, 
\begin{equation}\label{eq:uchiyama}
\lim_{t \to \infty} \sup_{x > (c + \widetilde\varepsilon)t} |v(t,x) - \Phi_{c_v}(x-c_v t - h_0)| = 0.
\end{equation}
In fact, we deduce from \eqref{eq:expv1} that $h_0 = -\frac{1}{\lambda} \log \varepsilon$.
\end{lemma}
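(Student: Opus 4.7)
The plan is to work in the moving frame of speed $c_v$. Setting $w(t,\xi) := v(t, \xi + c_v t)$, the $v$-equation of \eqref{main system} becomes
\begin{equation*}
w_t = d\, w_{\xi\xi} + c_v\, w_\xi + rw(1 - w) - rb\, u(t, \xi + c_v t)\, w.
\end{equation*}
Under $\lambda \in (0,\sqrt{r/d})$ we have $c_v = d\lambda + r/\lambda > 2\sqrt{dr}$; in the non-trivial regime $c_v > c$ (if $c_v \leq c$ the statement follows at once from the scalar bound \eqref{Z-2}), the region $\{x > (c + \widetilde\varepsilon)t\}$ corresponds to $\{\xi > -(c_v - c - \widetilde\varepsilon)t\}$, which exhausts $\mathbb{R}$ as $t \to \infty$. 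On this region $x - ct > \widetilde\varepsilon t$, so the super-solution bound \eqref{Z-0}, together with \eqref{decay-estimate for phi_c} and the decay of $\varphi$ at $+\infty$ (cf.\ Remark \ref{rk1}), gives
\begin{equation*}
u(t, \xi + c_v t) \;\leq\; C\, e^{-\tau_c(x - ct)} \;\leq\; C\, e^{-\tau_c \widetilde\varepsilon t}
\end{equation*}
uniformly in $\xi$ on the moving region. Thus the coupling $-rbuw$ is an exponentially-small-in-$t$ perturbation of the scalar Fisher-KPP operator acting on $w$.

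The next step is to show $w(t,\cdot) \to \Psi_{c_v}(\cdot - h_0)$ uniformly on $\mathbb{R}$ for some $h_0 \in \mathbb{R}$ (I read $\Phi_{c_v}$ in the statement as $\Psi_{c_v}$, the traveling wave of \eqref{tw-eq for v}). Since $c_v > 2\sqrt{dr}$ is supercritical, $\Psi_{c_v}$ is exponentially stable under the unperturbed KPP dynamics in suitable weighted norms (Sattinger \cite{Sat}, Uchiyama \cite{Uch}). I would construct a squeezing pair
\begin{equation*}
\Psi_{c_v}(\xi - h_-(t)) - \eta(t)\, q(\xi - h_-(t)) \;\leq\; w(t,\xi) \;\leq\; \Psi_{c_v}(\xi - h_+(t)) + \eta(t)\, q(\xi - h_+(t)),
\end{equation*}
with $q$ a suitable positive decaying profile and shifts $h_\pm(t) \to h_0$, $\eta(t) \to 0^+$ chosen so that both sides are genuine super- and sub-solutions of the perturbed equation. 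The exponential-in-$t$ smallness of the perturbation is precisely what is needed to absorb the inhomogeneous contribution into the rates of $h_\pm$ and $\eta$; the comparison principle for the scalar equation then yields uniform convergence of $w(t,\cdot)$ to $\Psi_{c_v}(\cdot - h_0)$, which is the desired conclusion in the original variables.

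Finally, the value of $h_0$ is pinned down by matching spatial asymptotics at $\xi = +\infty$. Using $\Psi_{c_v}(y) = \Phi_{c_v/\sqrt{rd}}(\sqrt{r/d}\,y)$ and \eqref{decay-estimate for phi_c}, one has $\Psi_{c_v}(\eta) \sim e^{-\lambda \eta}$ as $\eta \to +\infty$, so that $\Psi_{c_v}(\xi - h_0) \sim e^{\lambda h_0}\, e^{-\lambda \xi}$. On the other hand \eqref{eq:expv1} gives $\lim_{\xi \to \infty} e^{\lambda \xi} w(t,\xi) = \varepsilon$; matching the two prefactors forces $e^{\lambda h_0} = \varepsilon$, which yields the claimed closed form for $h_0$. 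I expect the main obstacle to be the squeezing step: one must tune $h_\pm(t)$ and $\eta(t)$ so as to simultaneously dominate the $u$-perturbation and close the finite-time initial gap. This delicate balance is made possible by the fact that both the $u$-perturbation and the KPP linearization near $\Psi_{c_v}$ decay exponentially in $t$, at rates that can be played off against each other.
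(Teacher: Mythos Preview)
Your proposal is correct in spirit and rests on the same two observations as the paper: (i) in the region $x>(c+\widetilde\varepsilon)t$ the competitor $u$ decays to zero exponentially in $t$, so the $v$-equation becomes an exponentially small perturbation of the scalar Fisher--KPP equation; (ii) by Proposition~\ref{prop:exp} the profile $v(0,\cdot)$ has the exact exponential tail $\sim \varepsilon e^{-\lambda x}$ at $+\infty$, which selects the speed $c_v$ and pins down the shift $h_0=-\lambda^{-1}\log\varepsilon$. The paper's proof simply records these two facts and then invokes Uchiyama's convergence theorem \cite[Theorems~8.2 and 9.3]{Uch} as a black box; your Sattinger--type squeezing pair is essentially a direct re-derivation of that convergence in this perturbed setting. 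What your route buys is a self-contained argument that explicitly absorbs the $-rbuw$ coupling, whereas the paper's one-line citation leaves to the reader the (standard but not entirely trivial) verification that Uchiyama's result is robust under an exponentially-in-$t$ small inhomogeneity. Conversely, citing \cite{Uch} spares you the bookkeeping you flag as the ``main obstacle'': arranging that the initial data at some large time $t_0$ fits between the two barriers. For that step you should use Lemma~\ref{New-lm2} (which already gives $v\to 1$ uniformly on $\{(c+\widetilde\varepsilon)t\le x\le (c_v-\widetilde\varepsilon)t\}$) together with the bounds of Claims~\ref{claim:vupper}--\ref{claim:vlower} to place $w(t_0,\cdot)$ in a neighborhood of some translate of $\Psi_{c_v}$; once that is done your barrier construction closes. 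Finally, your reading of $\Phi_{c_v}$ as $\Psi_{c_v}$ is correct and consistent with the rest of the paper (cf.\ \eqref{MT1.1.5}).
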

\begin{proof}
First, observe that $\sup_{x > (c + \varepsilon)t} u(t,x) \to 0$  exponentially as $t \to +\infty$. Based on the exact exponential decay of $v(0,x)$ at $x = +\infty$; see Proposition \ref{prop:exp}, we  apply \cite[Theorem 8.2 or 9.3]{Uch} to yield \eqref{eq:uchiyama}.
\end{proof}

\subsubsection{Monostable case}
Now, we present the proof of Theorem \ref{Main Tm} by establishing the large time behavior of the entire solutions in the monostable cases:  
 $$
 {\bf (1)}  \quad 0 < a,b < 1, \quad \text{ and }\quad {\bf(3)} \quad 0 < a < 1 < b.
 $$ 
 
 \begin{proof}[Proof of Theorem \ref{Main Tm} for cases {\bf(1)} and {\bf(3)}]
Recall the exponential decay estimates of $(u,v)(0,x)$ at $x = \pm \infty$ as described in Remark \ref{rmk:jj} and Proposition \ref{prop:exp}. For case {\bf(1)} we apply \cite[Theorem 1.3]{LLL2019} to prove \eqref{MT1.1.1} - \eqref{MT1.1.4}, whereas for case {\bf(2)} we utilize either \cite[Theorem 6.1]{LLL2019} or \cite[Theorem 1.3]{LeoLam2018} to yield \eqref{e:MT1.3.1} - \eqref{MT1.3.3}. In the latter case, it suffices to observe that for $t \geq 0$, our solution $(u,v)$ can be controlled by the pair of super-sub-solutions constructed in \cite[Propositions 1.4 and 1.6]{LeoLam2018}. Finally, \eqref{MT1.1.5} and \eqref{MT1.3.4} follows from Lemma \ref{lem:uchiyama}.
 \end{proof}

\subsubsection{Bistable case.}  In this subsection we complete the proof of Theorem \ref{Main Tm} by establishing the large time behavior of the entire solutions in the bistable case $${\bf (2)}  \quad  a,b > 1.$$

We note that Lemma \ref{New-lm2} provides an upper bound for the spreading speed of the species $u(t,x)$, and Lemmas \ref{New-lm1} and \ref{New-lm2} show that the faster but weaker competitor $v(t,x)$ spread at the speed $c_vt$.

  As mentioned above, to complete the proof of Theorem \ref{Main Tm} in case {\bf2}, we follow the techniques developed in \cite{Car2018} and \cite{Peng2019}. More specifically, we first introduce some useful functions 
\begin{equation}\label{eq:xipq}
 \xi(t)  =  \xi_0 e^{-\delta_1 t}, \quad P(t) = P_0 e^{-\delta_1 t}, \quad \text{ and }\quad Q(t) = Q_0 e^{-\delta_1 t},
 \end{equation}
 where $\delta_1, P_0, Q_0>0$ and $\xi_0<0$ are constants.  
 \begin{lemma}\label{New-lm4}
 For each  $\delta_1>0$ sufficiently small, there exist $P_0, Q_0>0$ and $\xi_0 <0$  such that $(\overline{u}(t,x),\underline{v}(t,x))$ on $\R^+\times\R$ given by
$$
\begin{cases}
\overline{u}(t,x):=\max\{0,\varphi_{uv}(x-C_{uv}t-\xi(t))-Q(t)\} \cr
\underline{v}(t,x):=\min\{1,\psi_{uv}(x-C_{uv}t-\xi(t))+P(t)\}
\end{cases}
 $$
where $\bm{\Phi}_{uv}:=(\varphi_{uv},\psi_{uv})$ is the traveling wave solution to \eqref{eq:TW-eq1} with speed $C_{uv}$,
satisfies
$$
\begin{cases}
\mathcal{A}_{1,C_{uv}}(\overline{u},\underline{v})(t,x)\geq 0 \cr
\mathcal{A}_{1,C_{uv}}(\overline{u},\underline{v})(t,x)\le 0
\end{cases} \quad  \text{ in the weak sense for }(t,x) \in \mathbb{R}^+\times \mathbb{R}.
 $$
 \end{lemma}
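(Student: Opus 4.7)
The plan is to verify both super/sub-solution defect inequalities by direct substitution in the shifted co-moving coordinate $\eta:=x-C_{uv}t-\xi(t)$. In the region where neither cutoff is active, i.e.\ $\varphi_{uv}(\eta)>Q(t)$ and $\psi_{uv}(\eta)+P(t)<1$, we have $\overline u=\varphi_{uv}(\eta)-Q(t)$ and $\underline v=\psi_{uv}(\eta)+P(t)$. Using $\xi'=-\delta_1\xi$, $P'=-\delta_1P$, $Q'=-\delta_1Q$ together with the traveling-wave system \eqref{eq:TW-eq1} for $(\varphi_{uv},\psi_{uv})$ at speed $C_{uv}$, algebraic cancellation yields
\[
\partial_t\overline u-\mathcal{A}_{1,C_{uv}}(\overline u,\underline v) = \delta_1\xi(t)\varphi_{uv}'(\eta) + Q(t)\bigl(\delta_1+1-2\varphi_{uv}-a\psi_{uv}\bigr) + aP(t)\varphi_{uv} + \mathrm{R}_1,
\]
with $\mathrm{R}_1=O(P^2+Q^2+PQ)$, and a structurally identical identity for $\partial_t\underline v-\mathcal{A}_{2,C_{uv}}(\overline u,\underline v)$ obtained by swapping $(\varphi_{uv},\psi_{uv})$ with $(\psi_{uv},\varphi_{uv})$, exchanging $(P,Q)$ and $(a,b)$, and inserting the factor $r$. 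At the cutoff sets $\{\overline u=0\}$ and $\{\underline v=1\}$ the required inequalities hold in the weak (viscosity) sense from $\mathcal{A}_{1,C_{uv}}(0,\underline v)=0$ and $\mathcal{A}_{2,C_{uv}}(\overline u,1)=-rb\,\overline u\le 0$, together with the correct concavity of the corners produced by the $\max$ and $\min$.

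I would then partition the active region into a far-right zone $\eta\ge R$ where $(\varphi_{uv},\psi_{uv})$ is close to ${\bf e}_1=(1,0)$, a compact transition zone $|\eta|\le R$, and a far-left strip close to ${\bf e}_2=(0,1)$. In each far-field zone the K-ordered linearization of \eqref{main system} at the corresponding equilibrium is cooperative, with spectrum strictly in the left half-plane and a positive spectral gap (this uses $a,b>1$). Fixing $\delta_1$ smaller than both gaps and choosing the ratio $P_0/Q_0$ so that the constant perturbation $(-Q_0,+P_0)$ aligns with the stable direction of the far-field linearization yields the correct signs of the two defects in those zones, up to the higher-order remainder $\mathrm{R}_1$. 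In the compact transition zone, $\varphi_{uv}'$ and $|\psi_{uv}'|$ are bounded below by a positive constant $c_R$; since $\xi_0<0$ and $\delta_1>0$, the shift contributions $\delta_1\xi(t)\varphi_{uv}'$ and $\delta_1\xi(t)\psi_{uv}'$ have the sign needed to dominate the uniformly bounded remaining terms once $|\xi_0|$ is taken sufficiently large in terms of $P_0,Q_0$.

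The parameter budget is therefore fixed in the order $\delta_1\to P_0/Q_0\to|\xi_0|\to$ (smallness of $P_0,Q_0$): first $\delta_1$ less than both spectral gaps, then the ratio $P_0/Q_0$ to match the stable subspace at the far field, then $|\xi_0|$ enlarged to control the transition zone, and finally $P_0,Q_0$ shrunk so that the remainder $\mathrm{R}_1$ and its analogue are absorbed uniformly on $[0,\infty)\times\mathbb{R}$.

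The main obstacle is the transition zone, because the linearization at $\bm{\Phi}_{uv}$ has a neutral translation mode spanned by $\bm{\Phi}_{uv}'$ and the spatially constant perturbation $(-Q,P)$ has a nonzero projection onto this kernel. It is precisely this projection that the shift $\xi(t)$ is designed to absorb, and balancing it against the stability constraints at ${\bf e}_1$ and ${\bf e}_2$ couples the choices of $\xi_0$, $P_0$ and $Q_0$ rather than allowing them to be selected independently; verifying this coupling is compatible with the prescribed sign $\xi_0<0$ is the technical heart of the proof.
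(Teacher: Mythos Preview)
Your plan is the direct Fife--McLeod-type verification that underlies the result the paper cites: the paper's own proof simply rescales $(\varphi_{uv},\psi_{uv})$ to match Carr\`ere's normalization and then appeals to \cite[Lemma~7]{Car2018} without reproducing the computation. So the route is the same once the reference is unwound.

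Two corrections are needed. First, orientation: the convention actually used in Theorem~\ref{Main Tm}(2) and its proof (compare \eqref{eq:q6}, which forces $\varphi_{uv}$ to be decreasing) is $(\varphi_{uv},\psi_{uv})(-\infty)={\bf e}_1$, $(\varphi_{uv},\psi_{uv})(+\infty)={\bf e}_2$, hence $\varphi_{uv}'<0<\psi_{uv}'$. (The table in the introduction records the opposite convention; that is a typo in the paper.) Your far-field zones are therefore swapped, and crucially the shift contribution $\delta_1\xi(t)\varphi_{uv}'$ is positive in the transition zone only because $\varphi_{uv}'<0$ and $\xi_0<0$; under your stated orientation both shift terms would carry the wrong sign.

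Second, the prescription ``fix $\delta_1$ smaller than both spectral gaps'' is not what the far-field computation yields. Near ${\bf e}_1$ (where neither cutoff is active, $\varphi_{uv}\to 1$, $\psi_{uv}\to 0$) the dominant part of the $v$-defect is
\[
-P\bigl[\delta_1+r(1-b)\bigr]=P\bigl[r(b-1)-\delta_1\bigr],
\]
which is $\le 0$ only when $\delta_1\ge r(b-1)$: the perturbation $+P$ pushes $v$ above the stable value $0$, and for $\underline v$ to remain a sub-solution it must decay at least at the linear relaxation rate. The correct order of choices is therefore to fix $\delta_1$ in a window bounded \emph{below} (not above) by the gap at ${\bf e}_1$, then choose $P_0/Q_0$ so that $aP_0\ge (1-\delta_1)Q_0$ for the $u$-defect there, and finally enlarge $|\xi_0|$ for the transition zone.
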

\begin{proof}
Define   
$$
\underline{S}(\xi) = \frac{1}{r}\varphi_{uv}(\sqrt{d} \xi) \quad \text{ and }\quad \overline{R}(\xi) =\psi_{uv}(\sqrt{d}\xi),
$$
then $(\underline{S}, \overline{R})$ satisfies
$$
\begin{cases}
\underline{\delta}_0 \underline{S}'' - \underline{c}_{SR} \underline{S}' + \underline{S}(\underline\alpha - \underline{S} - \underline{r}_0\overline{R}) = 0 \cr
\overline{R}'' - \underline{c}_{SR} \overline{R}' + \overline{R}(1- \overline{R} - \underline\beta_0 \underline{S}) = 0
\end{cases}
$$
where 
$$
\underline\delta_0 = \frac{1}{rd}, \quad  \underline{c}_{SR} = \frac{C_{uv}}{r \sqrt{d}}, \quad \underline\alpha  = \frac{1}{r}, \quad \underline{r}_0 = \frac{a}{r}, \quad \text{ and }\quad \underline\beta_0 = rb.
$$
And we may argue exactly the same as in \cite[Lemma 7]{Car2018}. 
\end{proof}

\begin{proof}[Proof of Theorem \ref{Main Tm} for case \bf{(2)}]
Firstly, the proof of \eqref{e:MT1.2.3} are exactly the same as in case (1). Now, by Lemma \ref{New-lm2} we have
\begin{equation}\label{eq:q0}
\lim_{t \to \infty}\sup_{ (c + \varepsilon)t < x < (c_v - \widetilde\varepsilon)t} |{\bf u} - {\bf e}_2|_1 = 0, \quad \text{ for each }\widetilde\varepsilon >0,
\end{equation}
which shows that part of \eqref{e:MT1.2.2} holds.
It remains to prove \eqref{e:MT1.2.1} and the rest of \eqref{e:MT1.2.2}.

Consider the solution $\hat{{\bf u}}=(\hat{u}, \hat{v})$ of \eqref{main system} in the domain $(t,x) \in \mathbb{R}^+\times\mathbb{R}$ with initial data $(\hat{u}_0,\hat{v}_0)$ such that $\hat{u}_0$ is compactly supported with $0 \leq \hat{u}_0(x) \leq u(x,0)$, and  $\hat{v}_0 \equiv 1$. By \cite[Theorem 1]{Peng2019}, there exists $h_2\in \mathbb{R}$ such that
\begin{equation}\label{eq:q1}
\lim_{t \to \infty}\sup_{x \geq 0} |\hat{{\bf u}}(t,x) - \bm{\Phi}_{uv}(x-C_{uv}t - h_2)|_1 =0.
\end{equation}
Note that we have
\begin{equation}\label{eq:q2}
\hat{{\bf u}}(t,x)\leq_{K} {\bf u}(t,x)\quad \text{ for }(t,x) \in \mathbb{R}^+ \times\mathbb{R}.
\end{equation}
In particular, for each $c^-, c^+$ such that $c^- < c^+ < C_{uv}$, we have
\begin{equation}\label{eq:q2b}
\lim_{t\to\infty} \inf_{c^- t < x < c^+ t}u(t,x)   \geq 1 \quad \text{ and }\quad \lim_{t\to\infty} \sup_{c^- t < x < c^+ t}v(t,x) =0.
\end{equation}

Furthermore, exploiting \eqref{eq:q0}, we can repeat the proof of \cite[Lemmas 4.6 and 4.7]{Peng2019} to show that, for each $\hat{c} \in (c,c_v)$, there exists $C_1, \delta_1, T_1$ such that
\begin{equation}\label{eq:q3}
u(t,\hat{c}t) \leq C_1 e^{-2\delta_1 t}, \quad  v(t,\hat{c}t) \geq 1-C_1 e^{-2\delta_1 t}, \quad \text{ for }t \geq T_1.
\end{equation}
By using \eqref{eq:q2b} and possibly enlarging $\delta_1$ and $T_1$, it is not difficult to show that for each $c^\sharp \in (-\infty, C_{uv})$, 
\begin{equation}\label{eq:q4}
\quad u(t,c^\sharp t) \leq 1, \quad v(t,c^\sharp t) \geq C_1 e^{-2\delta_1 t} \quad \text{ for }t \geq T_1,
\end{equation}
the latter follows from a sub-solution $\underline{v}_1$ for the equation of $v$ of the form 
$$\underline{v}_1(x,t) =\begin{cases} \varepsilon e^{-2\delta_1 t} \cos(\lambda(x-c^\sharp t)), &|x - c^\sharp t| < \pi/(2\lambda),\\
0, & |x-c^\sharp t| \geq \pi/(2\lambda), \end{cases}$$ 
where $\delta_1 \in ( r(b-1),\infty)$, and  $\lambda = \frac{1}{2d} \sqrt{|c^\sharp|^2 - 4d(\delta + r - rb)}$. Taking advantage of  the estimates \eqref{eq:q3} and \eqref{eq:q4}, one can then apply  the comparison principle to prove that
\begin{equation}\label{eq:q5}
{\bf u}\leq_{K}(\overline{u}, \underline{v}),\quad \text{ for }c^\sharp t \leq x \leq \hat{c} t, \,\, t \geq T_1,
\end{equation}
where $(\overline{u}, \underline{v})$ are given in Lemma \ref{New-lm4}. Passing to a sequence $t_n \to \infty$, we may assume ${\bf u}_n(t,x):= {\bf u}(t + t_n, x-C_{uv}t_n)$ converges in $C^{1,2}_{loc}(\mathbb{R}^2)$ to some ${\bf u}_\infty(t,x):=(u_\infty(t,x), v_\infty(t,x))$. 
By \eqref{eq:q1}, \eqref{eq:q2} and \eqref{eq:q5}, there exists $h_3>0$ such that
\begin{equation}\label{eq:q6}
\varphi_{uv}(x + h_3) \leq u_\infty(t,x) \leq \varphi_{uv}(x - h_3) \quad \text{ and }\quad\psi_{uv}(x - h_3) \leq v_\infty(t,x) \leq \psi_{uv}(x + h_3)  \quad \text{ for } (t,x) \in \mathbb{R}^2.
\end{equation}
We may then argue similarly as in the proof of \cite[Section 3.2]{Peng2019} to obtain \eqref{e:MT1.2.1}. We omit the details.
\end{proof}

\section{Proof of Theorem \ref{MT-3}}\label{s:5}

In this  section we outline the proof of Theorem  \ref{MT-3}.  Suppose that $d>0$, $r>0$, $0<a<1<b$ and $c_v\geq 2\max\{\sqrt{rd},\sqrt{a}\}$. Denote $$ 
\lambda_v:=\frac{1}{2}\left(c_v-\sqrt{c_v^2-4rd} \right),\quad \lambda:=\frac{1}{2}\left(c_v-\sqrt{c_v^2-4a} \right)\quad \text{and}\quad \varphi_{\lambda}(x):=e^{-\lambda x}, \quad x\in\R.
$$

Then define
\begin{equation}\label{new-eqz2'}
\mu := g_{1,c,1}(\lambda) = r(1-a) >0.
\end{equation}
By similar arguments to the proof of Lemma \ref{existence-of-unstable-eigenfunction} where $g_{1,c,1}(\lambda) = (1-a)>0$, we can prove the following result.

\begin{lemma} \label{existence-of-unstable-eigenfunction-2} Suppose that $d>0$, $r>0$, $0<a<1<b$ and $c_v> 2\max\{\sqrt{rd},\sqrt{a}\}$. Then there uniquely exists  $(\widehat{\varphi},\widehat{\psi})\in C^{2}(\R)$  such that for all $x\in\R$
\begin{equation}\label{eigenv-eq-3}
\begin{cases}
(1-a)\widehat{\varphi}=\widehat{\varphi}_{xx}+c_v\widehat{\varphi}_{x}+(1-a\Psi_{c_v}(x))\widehat{\varphi},\cr
(1-a)\widehat{\psi}=d\widehat{\psi}_{xx}+c_v\widehat{\psi}_{x}+r(1-2\Psi_{c_v}(x))\widehat{\psi}-rb\Psi_{c_v}\widetilde{\varphi},\cr
\widehat{\psi}(\pm\infty)=0, \quad \text{ and }\quad  \widehat{\psi}<0,\cr
{\displaystyle \lim_{x\to\infty}}\frac{\widehat{\varphi}(x)}{e^{-\lambda x}}=1,\quad {\displaystyle \sup_{x \in \mathbb{R}} \hat\varphi } < +\infty,  \quad \text{ and }\quad \widehat{\varphi}>0.
\end{cases}
\end{equation}
where $\lambda=\frac{1}{2}\left(c_v-\sqrt{c_v^2-4a} \right)$. Moreover, there exists $\Upsilon>0$ such that
$\displaystyle  \lim_{t\to-\infty}\widehat{\varphi}(x)= \Upsilon.$
\end{lemma}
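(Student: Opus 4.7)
The plan is to follow the two-step strategy of Lemma \ref{existence-of-unstable-eigenfunction} with the roles of the components reversed: first solve the decoupled scalar equation for $\widehat\varphi$, then recover $\widehat\psi$ via the substitution $\widetilde\psi = \widehat\psi/\Psi_{c_v}$ which turns the second equation into one with a dissipative operator, to which Hille--Yosida then applies. The natural eigenvalue here is $\mu = 1-a$, so that $\lambda^2 - c_v\lambda + 1 = \mu$ produces the prescribed exponent $\lambda$ at $+\infty$.

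For the first step, the $\widehat\varphi$-equation simplifies to
\[
\widehat\varphi_{xx} + c_v\widehat\varphi_x + a(1 - \Psi_{c_v}(x))\widehat\varphi = 0.
\]
At $+\infty$ it degenerates to $\widehat\varphi'' + c_v\widehat\varphi' + a\widehat\varphi = 0$, whose decaying exponential is $e^{-\lambda x}$; at $-\infty$ it degenerates to $\widehat\varphi'' + c_v\widehat\varphi' = 0$, whose only bounded positive solution is a constant. Because the target profile does not decay at $-\infty$, the exponential barriers used in Lemma \ref{lem:psi} do not apply directly. I would therefore substitute $\widehat\varphi(x) = e^{-\lambda x}f(x)$ with $\alpha := \sqrt{c_v^2 - 4a} > 0$, reducing the problem to finding a positive bounded $f$ with $f(-\infty) = 0$ and $f(+\infty) = 1$ solving
\[
f_{xx} + \alpha f_x - a\Psi_{c_v}(x)f = 0.
\]
The zero-order coefficient $-a\Psi_{c_v} \le 0$ is now dissipative, so $\bar f \equiv 1$ is a super-solution. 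Using the exponential decay of $\Psi_{c_v}$ at $+\infty$, the function $\underline f := \max\{0, 1 - Ce^{-\gamma x}\}$ is a sub-solution for suitably chosen $\gamma \in (0,\alpha)$ and large $C$; a standard Perron-type iteration then produces a solution $f$, hence $\widehat\varphi = e^{-\lambda x}f$. Uniqueness follows from the Wronskian/Lagrange identity used in Lemma \ref{lem:psi}(a). For the limit at $-\infty$, analyzing the limiting linear equation shows that any bounded nontrivial solution $f$ must satisfy $f(x) \sim \Upsilon e^{\lambda x}$ with $\Upsilon > 0$, equivalently $\widehat\varphi(x) \to \Upsilon > 0$ as $x\to-\infty$.

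For the second step, given $\widehat\varphi$, I substitute $\widetilde\psi := \widehat\psi/\Psi_{c_v}$. Using the traveling wave identity $d\Psi_{c_v}'' + c_v\Psi_{c_v}' = -r(1-\Psi_{c_v})\Psi_{c_v}$, a direct computation yields
\[
d\widetilde\psi_{xx} + \Bigl(2d\frac{\Psi_{c_v}'}{\Psi_{c_v}} + c_v\Bigr)\widetilde\psi_x - \bigl(r\Psi_{c_v} + (1-a)\bigr)\widetilde\psi = rb\,\widehat\varphi,
\]
whose zero-order coefficient is uniformly negative, bounded above by $-(1-a) < 0$. Exactly as in the proof of Lemma \ref{existence-of-unstable-eigenfunction}, this operator generates an analytic semigroup of contractions, so Hille--Yosida produces a unique bounded $\widetilde\psi$. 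The positivity of $rb\widehat\varphi$ together with the strong maximum principle forces $\widetilde\psi < 0$, and hence $\widehat\psi = \widetilde\psi\,\Psi_{c_v} < 0$; the decay $\widehat\psi(+\infty) = 0$ is inherited from the exponential decay of $\Psi_{c_v}$.

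The main technical obstacle is the construction of $\widehat\varphi$: because it converges to a positive constant rather than decaying at $-\infty$, the standard decaying-exponential barriers fail and the reformulation via $f = e^{\lambda x}\widehat\varphi$ is essential. Matching the sub-solution simultaneously near $+\infty$ (where $\underline f \to 1$) and near $-\infty$ (where $\underline f \equiv 0$) while staying below the super-solution requires the exponential decay rate of $\Psi_{c_v}$ at $+\infty$ to be exploited delicately in the choice of the parameters $(\gamma, C)$.
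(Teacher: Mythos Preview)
Your two-step strategy is precisely what the paper intends by ``similar arguments to the proof of Lemma \ref{existence-of-unstable-eigenfunction}'': solve the decoupled scalar equation first (here for $\widehat\varphi$, playing the role of $\psi$ in Lemma~\ref{lem:psi}), then recover the coupled component via division by the wave profile and Hille--Yosida. Your substitution $f = e^{\lambda x}\widehat\varphi$ is a clean repackaging of the $\delta_v = 0$ case of Lemma~\ref{lem:psi}: your sub-solution $\max\{0,1-Ce^{-\gamma x}\}$ corresponds exactly to $\underline\psi_1 = \max\{0,e^{-\lambda x} - De^{-\widetilde\lambda x}\}$ with $\widetilde\lambda = \lambda+\gamma$, while your super-solution $\bar f\equiv 1$ replaces the piecewise barrier $\overline\psi_1$ of Lemma~\ref{lem:psi}. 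The trade-off is that the paper's piecewise super-solution directly gives boundedness of $\widehat\varphi$ at $-\infty$, whereas you must recover it a posteriori from the asymptotic ODE; both routes work. One small point: for $\underline f$ to be a sub-solution you need $\gamma$ smaller than the decay rate of $\Psi_{c_v}$ at $+\infty$, not just $\gamma<\alpha$.

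One item you do not address is the stated boundary condition $\widehat\psi(-\infty)=0$. In fact your construction cannot produce it: since $\widehat\varphi(-\infty)=\Upsilon>0$, the forcing $rb\widehat\varphi$ is not in $C_0(\mathbb R)$, so Hille--Yosida only yields a bounded $\widetilde\psi$, and the limiting equation at $-\infty$ gives $\widehat\psi(-\infty)=\widetilde\psi(-\infty)=-\tfrac{rb\Upsilon}{r+1-a}\neq 0$. This appears to be an inaccuracy in the statement itself rather than a defect of your argument (the same obstruction is present if one follows Lemma~\ref{existence-of-unstable-eigenfunction} literally in the boundary case $\delta_v=0$), and it is harmless downstream: Theorem~\ref{exist-tm-3'} only requires $\widehat\psi$ bounded so that the sub/super-solutions are well-ordered for small $\varepsilon$.
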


Using this result, we can again proceed as in Section \ref{s:3} and establish the following result.

\begin{theorem}\label{exist-tm-3'}
Suppose that $d>0$, $r>0$, $0<a<1<b$ and $c_v\geq 2\max\{\sqrt{rd},\sqrt{a}\}$. Let $\widehat{\bm{\Phi}}:=(\widehat{\varphi},\widehat{\psi})$ be the solution of \eqref{eigenv-eq-3} given by Lemma \ref{existence-of-unstable-eigenfunction-2}. Then the following statements hold. 
\begin{itemize}
\item[(a)] 
For each $0 < \varepsilon \ll 1$, there is a unique entire solution ${\bf u}(t,x):=(u(t,x),v(t,x))$ of \eqref{main system} satisfying 
\begin{equation}\label{main-asymptotic at -infty-3}
\bm{\Psi}_{c_v}(x-c_vt)+\varepsilon\widehat{\bm{\Phi}}(x-c_vt)e^{\widehat{p}(t)}\leq_K  {\bf u}(t,x)
\leq_K  \bm{\Psi}_{c_v}(x-c_vt)+\varepsilon\widehat{\bm{\Phi}}(x-c_vt)e^{\widehat{q}(t)},
\end{equation}
for $(t,x) \in (-\infty, 0] \times \R$, where 
 $$ 
 \lim_{t\to-\infty}|\widehat{p}(t)-(1-a) t|=\lim_{t\to-\infty}|\widehat{q}(t)-(1-a) t|=0.
 $$ 
\item[(b)] Furthermore,
\begin{equation}\label{eq:expp}
\lim_{x \to +\infty} e^{\lambda (x - c_{u,3}t)} u(t,x) = \varepsilon, \quad \text{ for each }t \in \mathbb{R}. 
\end{equation}
where $c_{u,3}>c_v$ and $\lambda \in (0,1)$ are given by \eqref{new-eqz2}.
\end{itemize}
\end{theorem}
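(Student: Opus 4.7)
The approach mirrors Section \ref{s:3}, swapping the roles of the two species. In the moving frame $\xi=x-c_v t$ with $\mu:=1-a$, the plan is to perturb the reference state $\bm\Psi_{c_v}(\xi)=(0,\Psi_{c_v}(\xi))$ along the eigenfunction $\widehat{\bm\Phi}=(\widehat\varphi,\widehat\psi)$ from Lemma \ref{existence-of-unstable-eigenfunction-2}, which has $\widehat\varphi>0>\widehat\psi$, thereby turning on $u$ while slightly depressing $v$. Define
\begin{equation*}
\underline{\bm\Phi}_\star(t,\xi):=\bm\Psi_{c_v}(\xi)+\varepsilon e^{\widehat p(t)}\widehat{\bm\Phi}(\xi),\qquad \overline{\bm\Phi}_\star(t,\xi):=\bm\Psi_{c_v}(\xi)+\varepsilon e^{\widehat q(t)}\widehat{\bm\Phi}(\xi),
\end{equation*}
where $\widehat p,\widehat q$ solve
\begin{equation*}
\dot{\widehat p}=\mu-\varepsilon Me^{\widehat p},\quad \widehat p(0)=-\log(1+\tfrac{\varepsilon M}{\mu}),\qquad \dot{\widehat q}=\mu+\varepsilon Me^{\widehat q},\quad \widehat q(0)=-\log(1-\tfrac{\varepsilon M}{\mu}).
\end{equation*}
Explicit formulas as in Lemma \ref{asymptotic-behavior-of-p-and-q} give $\widehat p(t),\widehat q(t)\sim \mu t$ as $t\to-\infty$ and $\widehat p(t)\le\widehat q(t)$ on $(-\infty,0]$, which combined with $\widehat\varphi>0>\widehat\psi$ yields $\underline{\bm\Phi}_\star\le_K\overline{\bm\Phi}_\star$.

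To verify these are genuine sub- and super-solutions, I would substitute them into $\bm{\mathcal A}_{c_v}$ and use the eigenvalue identities \eqref{eigenv-eq-3} to cancel all linear terms; the residuals reduce to expressions of the form $\pm\varepsilon^2 e^{2\widehat p}\widehat\varphi\bigl[M\mp(\widehat\varphi+a\widehat\psi)\bigr]$ in the $u$-equation and $\pm\varepsilon^2 e^{2\widehat p}\widehat\psi\bigl[M\mp r(\widehat\psi+b\widehat\varphi)\bigr]$ in the $v$-equation (and the analogues with $\widehat q$), whose K-order sign is pinned down by choosing $M>\max\{\|\widehat\varphi+a\widehat\psi\|_\infty,\,r\|\widehat\psi+b\widehat\varphi\|_\infty\}$. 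From here the proof runs verbatim as in Theorem \ref{existence-of-entire-solution}: solve \eqref{main system} forward from $t=-n$ with initial data $\underline{\bm\Phi}_\star(-n,\cdot)$ and $\overline{\bm\Phi}_\star(-n,\cdot)$, obtain K-monotone sequences via the competitive comparison principle, pass to the limit in $C^{1,2}_{\rm loc}$, and extend to all of $\mathbb R$ by forward well-posedness. Uniqueness follows from the time-shift trick of Lemma \ref{unqueness lem }, now with $\widehat r(t):=-\mu^{-1}\log(1-\tfrac{2\varepsilon M}{\mu}e^{\mu t})\ge 0$, satisfying $\widehat p(t+\widehat r(t))=\widehat q(t)$ and $\widehat r(t)\to 0$ as $t\to-\infty$: any two entire solutions trapped in the sandwich are forced to coincide by sending $n\to\infty$.

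For part (b), the sandwich from (a) together with the asymptotic $\widehat\varphi(\xi)/e^{-\lambda\xi}\to 1$ as $\xi\to+\infty$ (built into \eqref{eigenv-eq-3}) and the algebraic identity $\lambda(c_{u,3}-c_v)=1-a=\mu$, derived from the indicial equation $\lambda^2-c_v\lambda+a=0$ at the right end of \eqref{eigenv-eq-3} and the definition $c_{u,3}=\lambda+\lambda^{-1}$, translates to
\begin{equation*}
\varepsilon e^{\widehat p(t)-\mu t}\le\liminf_{x\to\infty}e^{\lambda(x-c_{u,3}t)}u(t,x)\le\limsup_{x\to\infty}e^{\lambda(x-c_{u,3}t)}u(t,x)\le\varepsilon e^{\widehat q(t)-\mu t},\quad t\le 0.
\end{equation*}
Since both exponents tend to $0$ as $t\to-\infty$, the two bounds squeeze to $\varepsilon$ in that limit. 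To upgrade this to every $t\in\mathbb R$, I would replicate the two-sided iteration of Proposition \ref{prop:exp}: propagate the upper bound $u(t_0,x)\le K_0 e^{-\lambda(x-c_{u,3}t_0)}$ forward in time by comparison with the linear super-solution $K_0 e^{-\lambda(x-c_{u,3}t)}$ of $\widetilde u_t=\widetilde u_{xx}+\widetilde u$ (which dominates the $u$-equation since $v\ge 0$ and $-u^2\le 0$), and propagate a matching lower bound $u(t_0,x)\ge K_0\bigl(e^{-\lambda(x-c_{u,3}t_0)}-D_0 e^{-\widetilde\lambda(x-c_{u,3}t_0)}\bigr)$ by a KPP-type sub-solution built in the spirit of Lemma \ref{lem:psi}(a), possibly inflating $D_0$ to some $D_1$; then applying these at $t_0=\bar t\to-\infty$ pinches the extremes to $\varepsilon$ at every $t\ge\bar t$, which yields \eqref{eq:expp}. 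The main obstacle is precisely this lower-bound propagation: one must choose $\widetilde\lambda\in(\lambda,\min\{2\lambda,\lambda+\lambda_v\})$ with a strict spectral gap so that the defect $-auv-u^2$ is absorbed by the $-D_1 e^{-\widetilde\lambda(\cdot)}$ correction uniformly in the relevant half-line, and the bookkeeping of the three exponents $\lambda,\widetilde\lambda,\lambda_v$ in the comoving frame $x-c_{u,3}t$ (rather than $x-c_v t$) requires care, though the algebra closely parallels Lemma \ref{lem:psi}(a).
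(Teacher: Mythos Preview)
Your proposal is correct and follows essentially the same approach as the paper, which simply states ``we can again proceed as in Section \ref{s:3}'' without spelling out details. You have correctly swapped the roles of the species, adapted the ODE couple $(\widehat p,\widehat q)$ so that $\widehat p$ is the globally defined bounded one (consistent with the Remark following the theorem), and identified the key identity $\lambda(c_{u,3}-c_v)=1-a$ needed for part (b); the propagation argument you sketch for (b) is exactly the analogue of Proposition \ref{prop:exp}.
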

\begin{remark}
The function $\widehat{p}(t)$ is defined for all time $t\in\R$, strictly increasing and bounded, and first inequality of \eqref{main-asymptotic at -infty-3} holds in fact for all $t \in \mathbb{R}.$ 
\end{remark}

\medskip

To complete the proof of Theorem \ref{MT-3}, it remains to show that the the entire solution ${\bf u}(t,x)$ provided by Theorem \ref{exist-tm-3'} satisfies the desired asymptotic behaviors at $t\approx\pm\infty$.

It is clear from \eqref{main-asymptotic at -infty-3} that \eqref{MT3.1.1} holds.  Note also from \eqref{main-asymptotic at -infty-3} that $v(0,x) \leq \Phi_{c_v}(x)$ for all $x$, so that by comparison, we have
$$
v(t,x) \leq \Phi_{c_v}(x-c_v t) \quad \text{ for } (t,x) \in \mathbb{R}^+\times \mathbb{R}.
$$ 
Hence, 
\begin{equation}\label{eq:ww-1}
\limsup_{t\to\infty}\sup_{x\ge (c_v+\widetilde{\varepsilon})t}v(t,x)\leq \limsup_{t\to\infty}\sup_{x\ge (c_v+\widetilde{\varepsilon})t}\Phi_{c_v}(\widetilde{\varepsilon}t)=0,\quad \forall\ 0<\widetilde{\varepsilon}\ll 1.
\end{equation}
Since
\begin{equation}\label{eq:ww-2}
u_t\geq u_{xx}+u(1-a-u)
\end{equation}
and $\liminf_{x\to-\infty}u(0,x)>0$, which is due to \eqref{main-asymptotic at -infty-3} and that $\displaystyle \lim_{x \to -\infty} \hat\varphi(x) >0$, we then conclude from spreading speed properties for Fisher-KPP equations that 
\begin{equation}\label{eq:ww-3}
\liminf_{t\to\infty}\inf_{x\leq (2\sqrt{1-a}-\widetilde{\varepsilon})t}u(t,x)\geq 1-a,\quad \forall\ 0<\widetilde{\varepsilon}\ll 1.
\end{equation}
Next, observe from Theorem \ref{exist-tm-3'}(b) that
\begin{equation}\label{eq:5.14}
\lim_{x\to\infty}e^{\lambda x}{u(0,x)} = \varepsilon>0, 
\end{equation}
and $c=\lambda+\frac{1}{\lambda}=c_v+\frac{1-a}{\lambda}>c_v$, where $0<\lambda<1$. Now, since $v \to 0$ in the moving coordinate with speed greater than $c_v$ by \eqref{eq:ww-1}, and that $u$ spreads in the absense of $v$ at speed $c = \lambda + \frac{1}{\lambda}$, we argue as in the proof of Lemma \ref{New-lm2} to show that 
\begin{equation}\label{eq:5.15} 
\lim_{t\to\infty}\sup_{(c_{v}+\widetilde{\varepsilon})t\leq x\leq (c-\widetilde{\varepsilon})t }|u(t,x)-1|=0,\quad \forall\ 0<\widetilde{\varepsilon}\ll 1,
\end{equation}
which, combined with 
\eqref{eq:ww-3} and  comparison principle for scalar parabolic equations, yields that 
\begin{equation}\label{eq:ww-4}
\liminf_{t\to\infty}\inf_{x\leq (c-\widetilde{\varepsilon})t}u(t,x)\geq 1-a.
\end{equation}
By \eqref{eq:ww-4}, and using $b>1>a$, we can use the classification of entire solution of \eqref{main system}; see \cite[Lemma 2.3]{LLL2019a}, to show that 
\begin{equation}\label{eq:ww-5}
\lim_{t\to\infty}\inf_{x\leq (c-\widetilde{\varepsilon})t}|{\bf u}(t,x)-{\bf e}_1|_1=0,\quad \forall\ 0<\widetilde{\varepsilon}\ll 1.
\end{equation}
Hence $\lim_{t\to\infty} \sup_{x \in \mathbb{R}} |v(t,x)| = 0$ in an exponential manner follows from \eqref{eq:ww-1} and \eqref{eq:ww-5}, so that the equation $u$ reduces to the KPP equation (with exponentially small in $t$ error terms) as $t \to +\infty$. Finally, note that $u(0,x)$ satisfies  \eqref{eq:5.14} and \eqref{eq:ww-4}, so we can apply \cite[Theorem 8.2 or 9.3]{Uch} to yield \eqref{MT3.1.2}. 
%
%
%
This completes the proof of Theorem \ref{MT-3}.

\section{Proof of Theorem \ref{MT2}}\label{s:6}

In this  section we outline the proof of Theorem  \ref{MT2}. Let $d>0$, $r>0$, $0<a,b<1$ and $c\ge 2$ be given such that \eqref{new-eqz1} holds. 

Therefore, appying Lemma \ref{existence-of-unstable-eigenfunction} for the case $\mu = g_{d,c,r}(\lambda) = r(1-b) >0$, we have the following result.

\begin{lemma} \label{existence-of-unstable-eigenfunction-3} Suppose that $d>0$, $r>0$, $0<a,b<1$, and $c> 2\max\{1,\sqrt{drb}\}$ are given. Set 
$$ 
\widetilde\lambda \in (\lambda_v, \lambda_v + \tau_c) \quad \text{ and }\quad \lambda_v=\frac{1}{2{d}}\left(c-\sqrt{c^2-4drb} \right).
$$
%
Then there uniquely exists  $\bm{\Phi}:=(\varphi,\psi)\in C^{2,b}(\R)$ satisfying 
\begin{equation}\label{eigenv-eq-4}
\begin{cases}
r(1-b)\varphi=\varphi_{xx}+c\varphi_{x}+(1-2\Phi_{c}(x))\varphi - a \Phi_c \psi,\quad &\text{ in }\R,\cr
r(1-b)\psi=d\psi_{xx}+c\psi_{x}+r(1-b\Phi_{c}(x))\psi,\quad &\text{ in }\R,\cr
 \varphi<0<\psi, &\text{ in }\R\cr
\varphi(\pm\infty)=0,\quad \text{ and }\quad e^{-\lambda_v x} - D e^{-\widetilde\lambda x} \leq \psi(x) \leq e^{-\lambda_v x} \quad &\text{ for }x \gg 1. 
\end{cases}
\end{equation}
Moreover, there exists $\Upsilon>0$ such that $\displaystyle \lim_{x \to -\infty} \psi(x) = \Upsilon$. 
\end{lemma}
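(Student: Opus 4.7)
The plan is to recognize that Lemma \ref{existence-of-unstable-eigenfunction-3} is essentially a specialization of Lemma \ref{existence-of-unstable-eigenfunction} and Lemma \ref{lem:psi} to the boundary case $\mu = g_{d,c,r}(\lambda_v) = r(1-b)$. First I would check that the choice $\lambda_v = \frac{1}{2d}(c - \sqrt{c^2 - 4drb})$ lies in the admissible range. Plugging into $g_{d,c,r}$, one verifies $d\lambda_v^2 - c\lambda_v + rb = 0$, i.e. $g_{d,c,r}(\lambda_v) = r(1-b)$, so $\mu = r(1-b) > 0$ (using $b<1$) and the hypothesis $g_{d,c,r}(\lambda) \geq r\max\{0,1-b\}$ of Lemma \ref{existence-of-unstable-eigenfunction} holds with equality. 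The inequality $c > 2\max\{1,\sqrt{drb}\}$ ensures that $c^2 > 4drb$ (so $\lambda_v \in \mathbb{R}$), that $\lambda_v < c/(2d)$, and by a short comparison that $\lambda_v < \sqrt{r/d}$. In the notation of Section \ref{s:2} this corresponds precisely to $\delta_v = 0$.

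Next I would invoke Lemma \ref{lem:psi}(a)--(b) to produce the unique positive $\psi \in C^2(\mathbb{R})$ solving the second equation of \eqref{eigenv-eq-4} with the asymptotic behaviour $e^{-\lambda_v x} - D e^{-\widetilde\lambda x} \leq \psi(x) \leq e^{-\lambda_v x}$ for $x \gg 1$, taking $\widetilde\lambda \in (\lambda_v, \lambda_v + \tau_c)$ close enough to $\lambda_v$ so that $g_{d,c,r}(\widetilde\lambda) < g_{d,c,r}(\lambda_v)$, as already done in the sub-solution construction there. The construction of $\varphi$ is then identical to the argument in the proof of Lemma \ref{existence-of-unstable-eigenfunction}: set $\phi = \varphi/\Phi_c$, rewrite the first equation of \eqref{eigenv-eq-4} as the scalar equation \eqref{a-2} with forcing $-a\psi$, and observe that the operator $\mathcal{L}_{\Phi_c}$ generates an analytic contraction semigroup on $C_0(\mathbb{R})$, so Hille-Yosida produces a unique $\widetilde\phi \in C_0(\mathbb{R})$ solving \eqref{a-2} with $\mu = r(1-b) > 0$. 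The maximum principle and the sign $-a\psi < 0$ give $\widetilde\phi < 0$, hence $\varphi := \widetilde\phi \Phi_c < 0$; that $\varphi \in C_0(\mathbb{R})$ yields $\varphi(\pm\infty) = 0$.

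For the behaviour at $-\infty$, I would simply apply Lemma \ref{lem:psi}(d): because $g_{d,c,r}(\lambda_v) = r(1-b)$ exactly, we have $\delta_v = 0$, so $\psi$ is monotone decreasing and $\lim_{x \to -\infty} \psi(x) = \Upsilon$ for some $\Upsilon > 0$. Uniqueness of $\psi$ is already Lemma \ref{lem:psi}(a), and uniqueness of $\varphi$ follows from the uniqueness in the Hille-Yosida construction above.

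There is essentially no genuine obstacle: the content is mostly bookkeeping, checking that the boundary-case hypotheses of Lemmas \ref{existence-of-unstable-eigenfunction} and \ref{lem:psi} are met. The only mildly delicate point is verifying $\lambda_v < \sqrt{r/d}$ from $c > 2\max\{1,\sqrt{drb}\}$, which reduces via squaring to the elementary inequality $\sqrt{rd}(1-b) < c$, automatic since $b \in (0,1)$ and $c > 2\sqrt{drb}$; all other steps are direct citations of earlier lemmas.
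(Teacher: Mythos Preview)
Your approach matches the paper's exactly: the paper simply records that Lemma \ref{existence-of-unstable-eigenfunction-3} follows by applying Lemma \ref{existence-of-unstable-eigenfunction} (and hence Lemma \ref{lem:psi}) in the boundary case $\mu = g_{d,c,r}(\lambda_v) = r(1-b)$, which is precisely the route you outline. One small arithmetic slip: in your check that $\lambda_v < \sqrt{r/d}$, squaring actually gives $\sqrt{rd}(1+b) < c$, not $\sqrt{rd}(1-b) < c$; the conclusion still holds, since either $c \leq 2\sqrt{rd}$ (and then $\lambda_v < c/(2d) \leq \sqrt{r/d}$ trivially) or $c > 2\sqrt{rd} > \sqrt{rd}(1+b)$ because $b<1$.
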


Using this result, we can again proceed as in Section \ref{s:3} and establish the following result.

\begin{theorem}\label{exist-tm-4'}
Suppose that $d>0$, $0<a,b< 1$, and $c\ge 2\max\{1,\sqrt{drb}\}$ be given. Let $\bm{\Phi}$ be the solution of \eqref{eigenv-eq-3} given by Lemma \ref{existence-of-unstable-eigenfunction-3}. Then for each $0<\varepsilon \ll 1$, there is a unique entire solution ${\bf u}(t,x):=(u(t,x),v(t,x))$ of \eqref{main system} satisfying 
\begin{equation}\label{main-asymptotic at -infty-4}
\bm{\Phi}_{c}(x-ct)+\varepsilon\bm{\Phi}(x-ct)e^{p(t)}\leq_K  {\bf u}(t,x)
\leq_K  \bm{\Phi}_{c}(x-ct)+\varepsilon\bm{\Phi}(x-ct)e^{q(t)},
\end{equation}
for every $t\leq 0,\ x\in\R$, where $0<\varepsilon\ll 1$ and 
 $$ 
 \lim_{t\to-\infty}|p(t)-r(1-b) t|=\lim_{t\to-\infty}|q(t)-r(1-b) t|=0.
 $$ 
Moreover, $q(t)$ is defined for all time $t\in\R$, strictly increasing and bounded, and second inequality of \eqref{main-asymptotic at -infty-4} holds for $t \geq 0$ as well.
\end{theorem}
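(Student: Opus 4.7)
The plan is to mirror the scheme developed in Section \ref{s:3} for Theorem \ref{existence-of-entire-solution}, but applied in the boundary (limiting) regime
$$
\mu = g_{d,c,r}(\lambda_v) = r(1-b), \qquad \lambda_v = \tfrac{1}{2d}\bigl(c-\sqrt{c^2-4drb}\bigr),
$$
where the eigenfunction $\psi$ from Lemma \ref{existence-of-unstable-eigenfunction-3} no longer decays at $-\infty$ but converges to $\Upsilon>0$. First I would choose
$$
M > \max\{\|\varphi+a\psi\|_\infty,\ r\|b\varphi+\psi\|_\infty\},\qquad 0<\varepsilon<\min\Bigl\{\tfrac{\mu}{M},\ \tfrac{\mu}{M\|\psi\|_\infty}\Bigr\},
$$
and define $p(t),q(t)$ exactly as in Lemma \ref{asymptotic-behavior-of-p-and-q} with this $\mu$. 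All the quantities involved are finite because $\varphi,\psi$ are bounded by Lemma \ref{existence-of-unstable-eigenfunction-3}, and the second bound on $\varepsilon$ ensures $\varepsilon e^{q(t)}\|\psi\|_\infty < 1$ for every $t \in \mathbb{R}$, which is the only point where the non-decay of $\psi$ at $-\infty$ forces a smallness restriction.

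Working in the co-moving frame $\xi = x-ct$, I then propose
$$
\underline{\bm\Phi}_\star(t,\xi):=\bm\Phi_c(\xi)+\varepsilon\bm\Phi(\xi)e^{p(t)},\qquad \overline{\bm\Phi}_\star(t,\xi):=\bm\Phi_c(\xi)+\varepsilon\bm\Phi(\xi)e^{q(t)},
$$
and check that these are respectively a $K$-sub- and a $K$-super-solution of the co-moving system \eqref{rewrite-main-system}. This is a direct replay of the calculation in Lemma \ref{enrire-super-sub-sol}: using the profile equation \eqref{tw-eq} together with the eigenvalue identities in \eqref{eigenv-eq-4} with eigenvalue $\mu = r(1-b)$, one finds
$$
\partial_t\underline{u}_\star - \mathcal{A}_{1,c}(\underline{u}_\star,\overline{v}_\star) = \varepsilon^{2}\varphi\, e^{2p(t)}\bigl[M+(\varphi+a\psi)\bigr] \le 0
$$
because $\varphi<0$ and $M>\|\varphi+a\psi\|_\infty$, and analogously the $v$-component of the super-solution inequality holds with the opposite sign. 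The positivity $\underline{u}_\star>0$ for $t\le 0$ comes from Remark \ref{rk1} (the bound on $\varphi/\Phi_c$), while $\overline{v}_\star<1$ on all of $\mathbb{R}^2$ follows from the smallness of $\varepsilon$ together with $\sup_t e^{q(t)} = \mu/(\varepsilon M)$ from \eqref{eq:pqprop2}.

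Existence of the entire solution then proceeds as in Section \ref{s:3}: set
$$
\underline{\bm\Phi}_n(t,\xi):={\bf u}\bigl(t,\xi;-n,\underline{\bm\Phi}_\star(-n,\cdot)\bigr),\qquad \overline{\bm\Phi}_n(t,\xi):={\bf u}\bigl(t,\xi;-n,\overline{\bm\Phi}_\star(-n,\cdot)\bigr),
$$
apply the competitive comparison principle (Proposition \ref{Comparison-principle}) to deduce the nested order in \eqref{time-lower-upper-bd-of-n}, and take $n\to\infty$ with standard parabolic interior estimates to obtain monotone $C^{1,2}_{\mathrm{loc}}$ limits on $(-\infty,0]\times\mathbb{R}$ that satisfy \eqref{main-asymptotic at -infty-4}. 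Uniqueness follows by the deterministic-up-to-translation argument already used for Theorem \ref{existence-of-entire-solution}: the function $r(t)=-\mu^{-1}\log\bigl(1+\tfrac{2\varepsilon M}{\mu}e^{\mu t}\bigr)$ of Lemma \ref{unqueness lem } satisfies $e^{p(t+r(t))}=e^{q(t)}$ and $r(t)\to 0^{-}$ as $t\to-\infty$, which sandwiches any two candidate entire solutions at $t=-n$ between $\underline{\bm\Phi}_\star(-n,\cdot)$ and $\underline{\bm\Phi}_\star(-n+r(-n),\cdot)$ and forces equality in the limit. To extend the super-solution inequality to $t\ge 0$, observe that $q(t)$ remains defined and bounded on all of $\mathbb{R}$ (Lemma \ref{asymptotic-behavior-of-p-and-q}), so $\overline{\bm\Phi}_\star$ is in fact an entire super-solution; a single application of the comparison principle forward from $t=0$ then transports the upper bound to all $t\ge 0$.

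The principal obstacle I anticipate is precisely the degeneracy $\delta_v=0$ in this limiting case: $\psi(\xi)\to\Upsilon>0$ as $\xi\to-\infty$ (Lemma \ref{existence-of-unstable-eigenfunction-3}) means that $\overline{v}_\star$ does not vanish at the left end of the spatial domain, so the super-solution approaches the positive constant $\varepsilon\Upsilon e^{q(t)}$ rather than $0$. This is what forces the two-sided smallness condition on $\varepsilon$ above (the second term $\mu/(M\|\psi\|_\infty)$ would be vacuous in the non-limiting setting of Theorem \ref{existence-of-entire-solution}), and it is the only place where the present argument genuinely departs from Section \ref{s:3}. Everything else---the eigenfunction computation, the monotone iteration, the translation-based uniqueness---is structurally identical.
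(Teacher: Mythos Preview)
Your proposal is correct and follows essentially the same approach as the paper, which simply says ``we can again proceed as in Section~\ref{s:3}'' and gives no separate proof; indeed, Theorem~\ref{exist-tm-4'} is just the boundary case $g_{d,c,r}(\lambda)=r(1-b)$ of Theorem~\ref{existence-of-entire-solution}, which already allows equality in its hypothesis. One small correction: your claim that the extra smallness condition $\varepsilon<\mu/(M\|\psi\|_\infty)$ is ``vacuous in the non-limiting setting'' is not quite right---Remark~\ref{rmk:bb} imposes the same requirement $\overline{v}_\star<1$ there too, and $\|\psi\|_\infty$ need not be $\le 1$ even when $\delta_v>0$; so this is not a genuine departure from Section~\ref{s:3} but the same restriction, now with $\|\psi\|_\infty$ attained (or approached) at $-\infty$ rather than in the interior.
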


\begin{proof}[Proof of Theorem \ref{MT2}]
To complete the proof of Theorem \ref{MT2}, it remains to show that the entire solution $(u(t,x),v(t,x))$ provided by Theorem \ref{exist-tm-4'} satisfies the desired asymptotic behaviors at $t\approx\pm\infty$.

It is clear from \eqref{main-asymptotic at -infty-4} that \eqref{MT2.1} holds. Note also from \eqref{main-asymptotic at -infty-4} that 
\begin{equation}
\inf_{x\leq x_0 }\min\{u(0,x),v(0,x)\}>0,\quad \forall x_0\in\R.
\end{equation}
Furthermore, By Proposition \ref{prop:exp}, we have
\begin{equation}
\lim_{x\to\infty}e^{\tau_c x} {u(0,x)} = 1,
\end{equation}
where $\tau_c=\frac{1}{2}\left(c-\sqrt{c^2-4}\right)$ and 
\begin{equation}
\lim_{x\to\infty}e^{\lambda_v x}{v(0,x)} = \varepsilon>0, 
\end{equation}
where $\lambda_v=\frac{1}{2}\left(c-\sqrt{c^2-4rb}\right)$. We note that $c_v=\lambda_v+\frac{r}{\lambda}>\lambda_v+\frac{rb}{\lambda_v}=c=\tau_c+\frac{1}{\tau_c}$.  

Hence, we can deduce the (rightwards) spreading speed $c_{u,1}$ of ${\bf e}_*$ by the results in \cite{LLL2019}, this establishes \eqref{MT2.2}, and that
$$
\lim_{t\to\infty} \sup_{(c_{u,1}+\widetilde\varepsilon)t < x < (c_{v,3} - \widetilde\varepsilon)t} |{\bf u}(t,x) - {\bf e}_2|_1 = 0.
$$
Then, we can apply \cite[Theorem 8.2 or 9.3]{Uch} to yield \eqref{MT2.4}. We omit the details. 
\end{proof}

\begin{appendices}
\section{Alternative Proof of Lemma \ref{lem:psi}}
In this section, we give an alternative proof of Lemma \ref{lem:psi}. In fact, the result we prove here is more general. We first introduce the operator
\[
\begin{matrix}
L: & H^2(\R) & \longrightarrow & L^2(\R)\\
& v & \longmapsto & d v_{\xi\xi}+cv_\xi+r(1-b\Phi_c)v
\end{matrix}
\]
and then the results in Lemma \ref{lem:psi} are now spectral properties of the linear operator $L$. More specifically, we have the following lemma.
\begin{lemma} Given $c\geq 2$ and $\mu\in\R$, the eigenvalue-eigenfunction problem 
\begin{equation}\label{e:Lop}
(L-\mu)\psi=0. 
\end{equation}
admits the following properties.
\begin{itemize}
\item If $\mu\in(r(1-b), r)$, up to scalar multiplication, there exists a unique solution to \eqref{e:Lop} in $H^2(\R^2)$. Furthermore, if $\mu\in (\max\{r(1-b), r-\frac{c^2}{4d}\}, r)$, then the solution to \eqref{e:Lop}, $\phi(\xi)\in H^2(\R)\cap C^2(\R)$, is nonzero everywhere (thus can be chosen to be positive) and admits the following asymptotic property,
\[
 \lim_{\xi\to+\infty}\frac{\phi_\xi}{\phi}=-\lambda:=-\frac{c-\sqrt{c^2-4d(r-\mu)}}{2d}, 
\]
where $\lambda\in(0,\frac{c}{2d})$ solves $\mu=d\lambda^2-c\lambda+r$.
\item If $\mu\in(-\infty, r(1-b))\cup(r,+\infty)$, there is no solution to \eqref{e:Lop} in $H^2(\R)$. 
\item If $\mu=r(1-b)>r-\frac{c^2}{4d}$, then up to scalar multiplication, there exists a unique solution $\phi$ to \eqref{e:Lop} in $C^2(\R)$ such that 
\[
\lim_{\xi\to+\infty}\frac{\phi_\xi}{\phi}=-\lambda, \qquad \lim_{\xi\to-\infty}\phi=\Upsilon, \quad \textrm{for some }\Upsilon\in\R.
\]
\end{itemize}
\end{lemma}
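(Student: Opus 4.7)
The plan is to recast the eigenvalue equation $(L-\mu)\psi=0$ as a first-order linear system $\mathbf{Y}_\xi = A(\xi)\mathbf{Y}$ with $\mathbf{Y}=(\psi,\psi_\xi)^T$ and
\[
A(\xi)=\begin{pmatrix} 0 & 1 \\ (\mu-r(1-b\Phi_c(\xi)))/d & -c/d \end{pmatrix}.
\]
Because $\Phi_c(\xi)\to 0$ exponentially as $\xi\to +\infty$ and $\Phi_c(\xi)\to 1$ exponentially as $\xi\to -\infty$, the matrix $A(\xi)$ converges exponentially to constant limits $A_\pm$ whose eigenvalues solve $d\alpha^2+c\alpha+(r-\mu)=0$ at $+\infty$ and $d\alpha^2+c\alpha+(r(1-b)-\mu)=0$ at $-\infty$. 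Whether $A_\pm$ are hyperbolic, together with the associated stable/unstable dimensions, will determine the entire analysis via exponential dichotomy and Fredholm theory for asymptotically autonomous linear ODEs.

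For the first bullet, when $\mu\in(r(1-b),r)$, both eigenvalues of $A_+$ have strictly negative real part (the stable subspace is two-dimensional), whereas $A_-$ has one positive and one negative eigenvalue (the unstable subspace is one-dimensional). Palmer's theorem then identifies $L-\mu$ as a Fredholm operator of index $\dim E^u(A_-)-\dim E^u(A_+)=1$, and an explicit construction mirroring the sub/super-solution argument in \lemref{lem:psi} (or a shooting argument from $-\infty$) yields a nontrivial element of its one-dimensional kernel. When in addition $\mu>r-c^2/(4d)$, the two eigenvalues of $A_+$ are real and distinct, and I would invoke a Levinson/Hartman-Wintner asymptotic integration theorem to conclude that a generic element of the two-dimensional stable subspace decays at the slower rate $-\lambda$, giving $\phi_\xi/\phi\to-\lambda$. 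Nonvanishing of $\phi$ follows by applying the conjugation $\phi=e^{-c\xi/(2d)}w$ to reduce to the self-adjoint equation $dw_{\xi\xi}+[r(1-b\Phi_c)-\mu-c^2/(4d)]w=0$ and using a Sturm comparison argument against the nonoscillatory constant-coefficient limit problems at $\pm\infty$.

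For the second bullet, the case $\mu<r(1-b)$ is purely dimensional: both eigenvalues of $A_-$ then have negative real part, so the ODE has no bounded solution at $-\infty$ and, a fortiori, none in $H^2$. The harder subcase is $\mu>r$, where both $A_\pm$ are hyperbolic with only one-dimensional stable and unstable subspaces and a dimensional count no longer suffices. Here I would use an energy estimate: rewriting the equation in the weighted self-adjoint form $(e^{c\xi/d}\psi_\xi)_\xi+d^{-1}e^{c\xi/d}(r(1-b\Phi_c)-\mu)\psi=0$, multiplying by $\psi$, and integrating over $\R$ yields
\[
\int_{\R} e^{c\xi/d}\psi_\xi^2\,d\xi=\frac{1}{d}\int_{\R} e^{c\xi/d}(r(1-b\Phi_c)-\mu)\psi^2\,d\xi,
\]
where the boundary terms $e^{c\xi/d}\psi\psi_\xi\big|_{-\infty}^{+\infty}$ vanish because hyperbolicity of $A_\pm$ forces any $H^2$ eigenfunction to decay exponentially at both ends (as $e^{\alpha_-^+\xi}$ at $+\infty$ and as $e^{\delta_v\xi}$ at $-\infty$), with $c/d+2\alpha_-^+<0$ and $c/d+2\delta_v>0$ ensuring the weighted boundary quantities vanish. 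Since the left side is nonnegative while the right side is strictly negative (as $\mu>r\geq r(1-b\Phi_c)$ for all $\xi$), we reach a contradiction, ruling out any eigenfunction in $H^2$.

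For the third bullet, $\mu=r(1-b)>r-c^2/(4d)$ is a boundary case where $A_-$ loses hyperbolicity — its characteristic roots degenerate to $0$ and $-c/d$ — so standard dichotomy theory no longer applies at $-\infty$. My plan is to obtain $\phi$ as the limit of the normalized eigenfunctions $\phi^{\mu_n}$ from the first bullet as $\mu_n\downarrow r(1-b)$, extracting a convergent subsequence via Harnack-type uniform bounds and $C^2_{\mathrm{loc}}$-equicontinuity, and verifying that the limit inherits the super-solution bound $\phi\leq e^{-\lambda\xi}$ at $+\infty$ (giving the asymptotic $\phi_\xi/\phi\to-\lambda$) while remaining bounded and monotone at $-\infty$ (reflecting the center direction of $A_-$), which forces the existence of a finite limit $\Upsilon=\lim_{\xi\to-\infty}\phi$. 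The main anticipated obstacle is precisely this degeneracy in the third bullet: standard exponential dichotomy fails and one must either invoke a center-manifold / slow-fast reduction at $-\infty$ or employ a delicate comparison argument to track the center direction and extract the precise limiting constant $\Upsilon$.
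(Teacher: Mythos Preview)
Your framework (first-order reformulation, Palmer/Fredholm theory, exponential dichotomy) matches the paper's, but the technical execution diverges in three places. The paper introduces polar coordinates $(\phi,\phi_\xi)=(r\cos\theta,r\sin\theta)$ and shows that the angular interval $(\arctan\lambda_+,\pi/2)$ is forward-invariant for $\theta$; this single dynamical observation simultaneously delivers nonvanishing of $\phi$, the precise asymptotic $\phi_\xi/\phi\to\lambda_+$, and---for $\mu>r$, where $\lambda_+>0$---exponential growth at $+\infty$ and hence nonexistence in $H^2$. Your route is more classical: self-adjoint conjugation plus Sturm comparison for nonvanishing, Levinson for the asymptotic rate, and a weighted energy identity for $\mu>r$. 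The energy argument is arguably cleaner than the paper's for that subcase. There is, however, a small gap in your Levinson step: asymptotic integration alone does not show that the particular $\phi$ emanating from the one-dimensional unstable subspace at $-\infty$ avoids the fast-decaying mode $e^{\lambda_-\xi}$ at $+\infty$; ``generic'' is not a proof. This is easily repaired by your own Sturm reduction: since $r(1-b\Phi_c)-\mu-c^2/(4d)<0$ everywhere under the hypothesis $\mu>r-c^2/(4d)$, the conjugate $w=e^{c\xi/(2d)}\phi$ satisfies $w_{\xi\xi}=(\text{positive})\cdot w$, and a positive $w$ with $w(-\infty)=0$ cannot also vanish at $+\infty$ (an interior maximum would contradict $w_{\xi\xi}>0$), forcing $w\to+\infty$ and hence the slow rate for $\phi$. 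For the boundary case $\mu=r(1-b)$, the paper avoids your limiting procedure entirely by invoking an \emph{ordinary} (as opposed to exponential) dichotomy on $(-\infty,0]$, available because the zero eigenvalue of $A_-$ is simple; the polar-coordinate analysis then carries over verbatim and gives $\Upsilon$ directly. This is more direct than extracting a subsequential limit of $\phi^{\mu_n}$ and then verifying the delicate center-direction behavior post hoc.
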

\begin{proof}
We study the more general case $\mu\in\mathbb{C}$ and introduce the vector $\bm{\Phi}:=(\phi, \phi_\xi)$. The equation \eqref{e:Lop} can be written as 
\begin{equation}\label{e:LopV}
\bm{\Phi}_\xi=A(\xi,\mu)\bm{\Phi}:=\begin{pmatrix}0 & 1 \\ \frac{1}{d}[\mu-r(1-b\Phi_c)] & \frac{c}{d} \end{pmatrix}\bm{\Phi}.
\end{equation}
where the matrix $A(\xi,\mu)$ approaches constant matrices as $\xi\to\pm\infty$; that is,
\[
A_+=\lim_{\xi\to+\infty}A(\xi,\mu)=\begin{pmatrix}0 & 1 \\ \frac{1}{d}(\mu-r) & \frac{c}{d} \end{pmatrix}, \quad 
A_-=\lim_{\xi\to-\infty}A(\xi,\mu)=\begin{pmatrix}0 & 1 \\ \frac{1}{d}[\mu-r(1-b)]& \frac{c}{d} \end{pmatrix}.
\]
It is well known that the operator $L-\mu$, where $\mu\in\mathbb{C}$, is Fredholm if and only if both $A_+$ and $A_-$ are hyperbolic; see \cite[Theorem 3.1.11]{KapPro} for more details. Introducing the characteristic polynomial 
\begin{equation}
P(\lambda;\xi)=d\lambda^2+c\lambda+r(1-b\Phi_c(\xi)),
\end{equation}
and noting that $A_\pm$ are hyperbolic if and only if 
\[
\mu\not\in \Gamma_+\bigcup \Gamma_-, \quad\text{where } \Gamma_+:=\{P_+(\mathrm{i}k)\mid k\in\R\} \text{ and } \Gamma_-:=\{P_-(\mathrm{i}k) \mid k\in\R\},
\]
where $P_+(\lambda)=\lim\limits_{\xi\to+\infty}P(\lambda;\xi)=d\lambda^2+c\lambda+r$ and $P_-(\lambda)=\lim\limits_{\xi\to-\infty}P(\lambda;\xi)=d\lambda^2+c\lambda+r(1-b)$,
we conclude that $L-\mu$ is Fredholm if and only if $\mu\not\in \Gamma_+\bigcup \Gamma_-$. The Fredholm boundaries, $\Gamma_+\bigcup \Gamma_-$, divide the complex plane into 3 simply connected regions, 
\[
\Omega_1:=\{\alpha+\mathrm{i}\beta\mid \alpha>-d\left( \frac{\beta}{c}\right)^2+r, \beta\in\R\}, \, \Omega_2:=\{\alpha+\mathrm{i}\beta\mid \alpha<-d\left( \frac{\beta}{c}\right)^2+r(1-b), \beta\in\R\}, \, \Omega_3:=\mathbb{C}\backslash \overline{\Omega_1\cup\Omega_2};
\]
see Figure \ref{f:Fred} for an illustration. Furthermore, the Morse indice of $A_\pm$, that is, the dimension of unstable space associated to $A_\pm$,respectively denoted as $i_\pm(\mu)$, takes distinctive values in $\Omega_i$'s, $i=1,2,3$; that is,
\[
i_+(\mu)=\begin{cases}1, & \mu\in\Omega_1,\\ 0, & \mu\not\in\overline{\Omega_1} \end{cases}, \qquad  i_-(\mu)=\begin{cases}1, & \mu\not\in\overline{\Omega_2},\\ 0, & \mu\in\overline{\Omega_2} \end{cases}, 
\]
which yields that the Fredholm index of $L-\mu$, denoted as $\textrm{ind}(L-\mu)$, takes the following values.
\[
\mathrm{ind}(L-\mu)=i_-(\mu)-i_+(\mu)=\begin{cases}0, & \mu\in\Omega_1, \\ 1,  & \mu\in\Omega_3, \\0, & \mu\in\Omega_2. \end{cases}
\]
\begin{figure}
\centering
\includegraphics[width=0.6\textwidth]{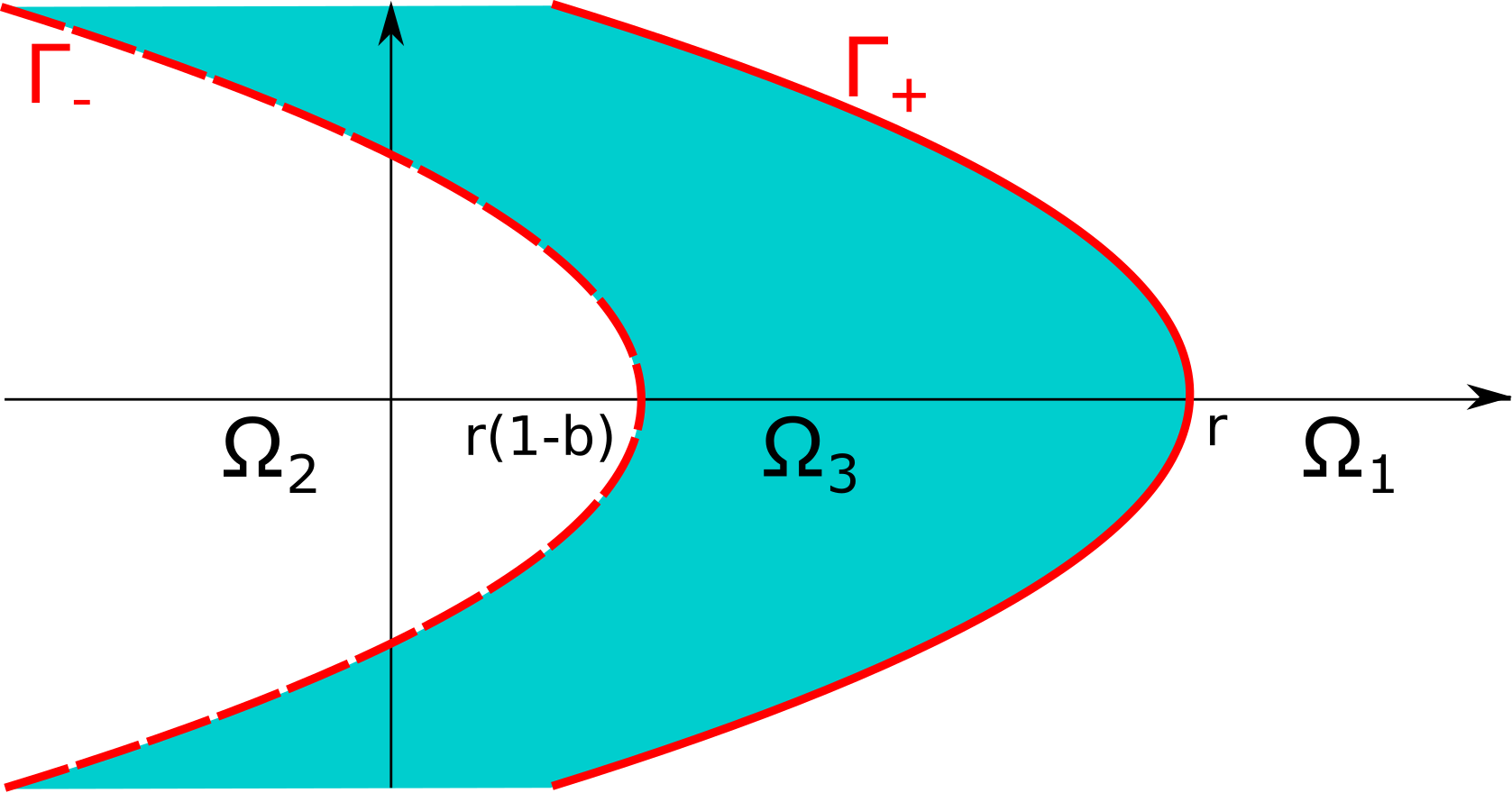}
\caption{A depiction of the sets $\{\Omega_i\}_{i=1}^{3}$ separated by the Fredholm boundaries $\Gamma_+$ (red, solid) and $\Gamma_-$ (red, dashed).}
\label{f:Fred}
\end{figure}
According to the exponential dichotomy theory in \cite{Cop}, in the case when $\mu\in\Omega_2$, we conclude from $i_\pm(\mu)=0$ that there is no solution $\bm{\Phi}$ to \eqref{e:LopV} in $H^1(\R^2)$. As a result, there is no solution $\phi$ to \eqref{e:Lop} in $H^2(\R)$. Similarly, in the case when $\mu\in\Omega_3$, we conclude from $i_+(\mu)=0$ and $i_-(\mu)=1$ that, up to scalar multiplication, there exist a unique solution $\phi$ to \eqref{e:LopV} in $H^2(\R)$. Moreover, for any $\mu\in\Omega_3$, the polynomial $P_-(\lambda)=\mu$, that is,
\[
d\lambda^2+c\lambda+r(1-b)-\mu=0,
\]
admits two distinctive roots
\[
\widetilde{\lambda}_\pm:=\frac{-c\pm\sqrt{c^2-4d(r-rb-\mu)}}{2d},
\]
which, when $\mu\in (\max\{r(1-b), r-\frac{c^2}{4d}\}, r)$, admits $\widetilde{\lambda}_-<0<\widetilde{\lambda}_+$. Moreover, the solution $\phi$ to \eqref{e:Lop} as $\xi\to-\infty$ satisfies
\begin{equation}\label{e:tan-}
\lim_{\xi\to-\infty} \frac{\phi_\xi}{\phi}=\widetilde{\lambda}_+.
\end{equation}
Similarly,  the polynomial $P_+(\lambda)=\mu$, that is, $d\lambda^2+c\lambda+r-\mu=0$, admits two distinctive roots 
\[
\lambda_\pm:=\frac{-c\pm\sqrt{c^2-4d(r-\mu)}}{2d},
\]
which, when $\mu\in (\max\{r(1-b), r-\frac{c^2}{4d}\}, r)$, admits $\lambda_-<\lambda_+<0$.
To show that $\phi $ is nonzero everywhere and $\lim\limits_{\xi\to+\infty}\frac{\phi_\xi}{\phi}=\lambda_+$, we introduce the polar coordinates $(r,\theta)\in [0,+\infty)\times\mathbb{T}_{2\pi}$ satisfying 
\[
\bm{\Phi}=\begin{pmatrix}\phi \\ \phi_\xi \end{pmatrix}=\begin{pmatrix}r\cos\theta \\ r\sin\theta \end{pmatrix},
\]
and rewrite \eqref{e:LopV} in the polar coordinates as
\begin{equation}\label{e:LopR}
\begin{cases}
r_\xi=r\left\{ \left[ 1+\frac{\mu}{d} -\frac{r}{d}(1-b\Phi_c)\right]\cos\theta-\frac{c}{d}\sin\theta\right\}\sin\theta,\\
\theta_\xi=F(\theta,\xi):=-\frac{1}{d}\left(P(\tan\theta;\xi)-\mu\right)\cos^2\theta.
\end{cases}
\end{equation}
Noting that for any $\xi\in\R$,
\[
F(\pi/2, \xi)=-\sin^2(\pi/2)=-1<0, \qquad F(\arctan^{-1}(\lambda_+),\xi)=\frac{rb\Phi_c(\xi)}{d(\lambda_+^2+1)}>0, 
\]
it is straightforward to see that the interval $(\arctan^{-1}(\lambda_+), \pi/2)$ is forward-invariant for $\theta$.  Recalling the limiting behavior of $\phi$ as $\xi\to-\infty$, \eqref{e:tan-}, we conclude
\[
\frac{\phi_\xi(\xi)}{\phi(\xi)}\in (\lambda_+,\infty), \quad \forall \xi\in\R,
\]
which shows that $\phi$ is nonzero everywhere. Furthermore, we also note that the interval
\[
I(\xi):=\{ \theta\in(\arctan^{-1}(\lambda_+), \pi/2)\mid F(\theta;\xi)<0\},
\]
becomes the whole interval $(\arctan^{-1}(\lambda_+), \pi/2)$ as $\xi\to+\infty$, from which we can conclude by a straightforward proof-by-contradiction argument that 
\[
\lim\limits_{\xi\to+\infty}\frac{\phi_\xi}{\phi}=\lambda_+.
\]
If $\mu>r$, then $\lambda_+>0$, which, together with the fact that the interval $(\arctan^{-1}(\lambda_+), \pi/2)$ is forward-invariant for $\theta$, shows that there is no solution to \eqref{e:Lop} in $H^2(\R)$.\\
If $\mu=r(1-b)>r-\frac{c^2}{4d}$, then $\widetilde{\lambda}_+=0>\widetilde{\lambda}_-=-c$. The asymptotic matrix $A_-$ is not hyperbolic but, thanks to the fact that the eigenvalue $\lambda_+=0$ is geometrically simple, there still is an ordinary, but not exponential, dichotomy for the system \eqref{e:LopV} on $\xi\in(-\infty,0]$; see \cite{Cop} for details.
In addition, the fact that $\mu\in(r-\frac{d^2}{4d}, r)$ implies that the asymptotic matrix $A_+$ is hyperbolic with two distinctive negative eigenvalues, yielding an exponential dichotomy with trivial unstable subspace for \eqref{e:LopV} on $\xi\in[0,+\infty)$. Moreover, the analysis based on the polar coordinates still holds. As a result, we conclude that  up to scalar multiplication, there exists a unique solution $\phi$ to \eqref{e:Lop} in $C^2(\R)$ such that 
\[
\lim_{\xi\to+\infty}\frac{\phi_\xi}{\phi}=-\lambda, \qquad \lim_{\xi\to-\infty}\phi=\Upsilon, \quad \textrm{for some }\Upsilon\in\R.
\]
\end{proof}
\end{appendices}

\end{document}